\newcommand{\defeq}{\vcentcolon=}
\newif\if@borderstar
\def\bordermatrix{\@ifnextchar*{%
\@borderstartrue\@bordermatrix@i}{\@borderstarfalse\@bordermatrix@i*}%
}
\def\@bordermatrix@i*{\@ifnextchar[{\@bordermatrix@ii}{\@bordermatrix@ii[()]}}
\def\@bordermatrix@ii[#1]#2{%
\begingroup
\m@th\@tempdima8.75\p@\setbox\z@\vbox{%
\def\cr{\crcr\noalign{\kern 2\p@\global\let\cr\endline }}%
\ialign {$##$\hfil\kern 2\p@\kern\@tempdima & \thinspace %
\hfil $##$\hfil && \quad\hfil $##$\hfil\crcr\omit\strut %
\hfil\crcr\noalign{\kern -\baselineskip}#2\crcr\omit %
\strut\cr}}%
\setbox\tw@\vbox{\unvcopy\z@\global\setbox\@ne\lastbox}%
\setbox\tw@\hbox{\unhbox\@ne\unskip\global\setbox\@ne\lastbox}%
\setbox\tw@\hbox{%
$\kern\wd\@ne\kern -\@tempdima\left\@firstoftwo#1%
\if@borderstar\kern2pt\else\kern -\wd\@ne\fi%
\global\setbox\@ne\vbox{\box\@ne\if@borderstar\else\kern 2\p@\fi}%
\vcenter{\if@borderstar\else\kern -\ht\@ne\fi%
\unvbox\z@\kern-\if@borderstar2\fi\baselineskip}%
\if@borderstar\kern-2\@tempdima\kern2\p@\else\,\fi\right\@secondoftwo#1 $%
}\null \;\vbox{\kern\ht\@ne\box\tw@}%
\endgroup
}
\newtheorem{theorem}{Theorem}[section]
\newtheorem{corollary}[theorem]{Corollary}
\newtheorem{idea}[theorem]{Idea}
\newtheorem{lemma}[theorem]{Lemma}
\newtheorem{remark}[theorem]{Remark}
\newtheorem{proposition}[theorem]{Proposition}
\newtheorem{definition}[theorem]{Definition}
\newtheorem{propdef}[theorem]{Proposition/Definition}
\renewcommand{\S}{\mathcal{S}}
\definecolor{darkgreen}{rgb}{0,.5,0}
\definecolor{brown}{rgb}{0.5,0.3,0}
\begin{document}

\title{\textsf{Markov chains, CAT(0) cube complexes, and enumeration: \\ monotone paths in a strip mix slowly}}

\author{
\textsf{Federico Ardila--Mantilla}\footnote{\noindent \textsf{Department of Mathematics, San Francisco State University, Departamento de Matem\'aticas, Universidad de Los Andes; \texttt{federico@sfsu.edu}. Supported by National Science Foundation grant DMS-2154279.}}
\and
\textsf{Naya Banerjee}\footnote{\noindent \textsf{Department of Mathematical Sciences, University of Delaware; \texttt{banerjee.naya@gmail.com}. Work supported by National Science Foundation grant DMS-1554783.}}
\and
\textsf{Coleson Weir}\footnote{\noindent \textsf{Dept. of Economics, University of Notre Dame; \texttt{cweir2@nd.edu}. Research supported by National Science Foundation grant DMS-1554783.}} 
}
\date{}

\maketitle

\begin{abstract}
%
We prove that two natural Markov chains on the set of monotone paths in a strip mix slowly. To do so, we make novel use of the theory of non-positively curved (CAT(0)) cubical complexes to detect small bottlenecks in many graphs of combinatorial interest. Along the way, we give a formula for the number $c_m(n)$ of monotone paths of length $n$ in a strip of height $m$. In particular we compute the exponential growth constant of $c_m(n)$ for arbitrary $m$, generalizing results of Williams for $m=2, 3$.
\end{abstract}

\section{\textsf{Introduction}} \label{sec:intro}
This paper uses tools from geometric group theory and enumerative combinatorics to derive probabilistic consequences about random walks of a combinatorial nature. Our methods have wide applicability, but we focus on one example of interest, which we carry out in detail: a random walk on the set of monotone paths in a strip.

A \emph{monotone path of length $n$  in a strip of height $m$} is a lattice paths that start at $(0,0)$, takes steps $N =(0,1)$, $S=(0,-1)$ and $E=(1,0)$, never retraces steps, and stays within the strip, as shown in Figure \ref{fig:arm} for $n=15$ and $m=3$.


\begin{figure}[h]
\begin{center}
\begin{tikzpicture}
\draw[step=.5cm,gray] (-5,-1.5) grid (0,0);
\draw[ultra thick, blue, ->] (-5,-1.5) -- (-4,-1.5) -- (-4,-1) -- (-3.5,-1) -- (-3.5,-.5) -- (-3,-.5) -- (-3,0) -- (-2.5,0) -- (-2,0) -- (-2, -.5) -- (-1.5,-.5) -- (-1.5,-1) -- (-1,-1) -- (-1, -.5) -- (-1,0); 
 \node [draw=blue,fill=blue,circle, inner sep=2pt,minimum size=2pt] at (-5,-1.5) {};
\end{tikzpicture} 
\caption{A monotone path of length 15 in a strip of height 3.\label{fig:arm}} 
\end{center}
\end{figure}
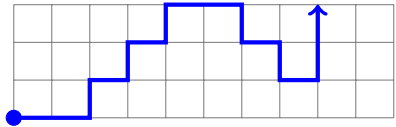

All monotone paths can be connected to each other by two kinds of local moves, illustrated in Figure \ref{fig:switch}.

\noindent
$\bullet$  switch corners: two consecutive steps that go in different directions exchange directions.

\noindent
$\bullet$  flip the end: the last step of the path rotates $90^\circ$.


\begin{figure}[h]\label{fig:moves}
\begin{center}
\begin{tikzpicture}
\draw[step=0.5cm,gray] (-0.5,-0.5) grid (0,0);
\draw[ultra thick, -, blue]  (-0.5,-0.5) -- (-0.5,0) -- (0,0);
\end{tikzpicture}
\begin{tikzpicture}
\draw[step=0.5cm,white,line width=0mm] (-0.5,-0.5) grid (0,0);
\draw[thick, <->] (-.5,-.25) -- (0,-.25);
\end{tikzpicture}
\begin{tikzpicture}
\draw[step=0.5cm,gray] (-0.5,-0.5) grid (0,0);
\draw[ultra thick, -, blue] (-0.5,-0.5) -- (0,-0.5) -- (0,0);
\end{tikzpicture}
\qquad  
\begin{tikzpicture}
\draw[step=0.5cm,gray] (-0.5,-0.5) grid (0,0);
\draw[ultra thick, -, blue]  (-0.5,0) -- (-0.5,-0.5) -- (0,-0.5);
\end{tikzpicture}
\begin{tikzpicture}
\draw[step=0.5cm,white,line width=0mm] (-0.5,-0.5) grid (0,0);
\draw[thick, <->] (-.5,-.25) -- (0,-.25);
\end{tikzpicture}
\begin{tikzpicture}
\draw[step=0.5cm,gray] (-0.5,-0.5) grid (0,0);
\draw[ultra thick, -, blue] (-0.5,0) -- (0,0) -- (0,-0.5);
\end{tikzpicture}
\qquad
\qquad
\begin{tikzpicture}
\draw[step=0.5cm,gray] (-0.5,-0.5) grid (0,0);
\draw[ultra thick, ->, blue]  (-0.5,-0.5) -- (-0.5,0);
\end{tikzpicture}
\begin{tikzpicture}
\draw[step=0.5cm,white,line width=0mm] (-0.5,-0.5) grid (0,0);
\draw[thick, <->] (-0.5,-.25) -- (0,-.25);
\end{tikzpicture}
\begin{tikzpicture}
\draw[step=0.5cm,gray] (-0.5,-0.5) grid (0,0);
\draw[ultra thick, ->, blue] (-0.5,-0.5) -- (0,-0.5);
\end{tikzpicture}
\qquad 
\label{Figure3a}
\begin{tikzpicture}
\draw[step=0.5cm,gray] (-0.5,-0.5) grid (0,0);
\draw[ultra thick, ->, blue]  (-0.5,0) -- (-0.5,-0.5);
\end{tikzpicture}
\begin{tikzpicture}
\draw[step=0.5cm,white,line width=0mm] (-0.5,-0.5) grid (0,0);
\draw[thick, <->] (-0.5,-.25) -- (0,-.25);
\end{tikzpicture}
\begin{tikzpicture}
\draw[step=0.5cm,gray] (-0.5,-0.5) grid (0,0);
\draw[ultra thick, ->, blue] (-0.5,0) -- (0,0);
\end{tikzpicture}

\end{center}
\caption{(a) Switching corners. \qquad \qquad \qquad \qquad \qquad    (b) Flipping the end. \label{fig:switch}}
\end{figure}

This model was introduced by Abrams and Ghrist \cite{Abr04}, who thought of this as a model for a robotic arm in a tunnel. They showed that the graph $S_{m,n}$ of possible configurations of the robotic arm connected by these local moves is a \emph{cubical complex} $\S_{m,n}$: a polyhedral complex made of unit cubes of varying dimensions. Ardila, Bastidas, Ceballos, and Guo {\cite{Ard17} proved that the space $\S_{m,n}$ is non-positively curved, or \emph{CAT(0)}, and used this to give an algorithm to move the robotic arm optimally between any two configurations. They also derived a  formula for the diameter of the graph $S_{m,n}$.

\bigskip

We revisit this combinatorial model, with new goals in mind. A central idea, relying on Ardila, Owen, and Sullivant's one-to-one correspondence between CAT(0) cube complexes and posets with inconsistent pairs (PIPs) \cite{AOS}, is the following:

\begin{idea} \label{idea}
When the transition kernel of a Markov chain $M$ is a CAT(0) cube complex, one can use the corresponding poset with inconsistent pairs (PIP) $P_M$ to find bottlenecks (vertex separators) in the kernel, and obtain upper bounds on the mixing time of $M$.
\end{idea}

This idea applies quite generally. In this paper we illustrate it in a detailed example: two random walks on the set of monotone paths in a strip. We make three main contributions, which we now describe. For precise definitions and statements, we refer the reader to the corresponding sections of the paper.

\bigskip

\textbf{A.} In Section \ref{sec:enum} we study the number $c_m(n)$ of monotone paths of length $n$ in a strip of height $m$. The generating functions for $c_2(n)$ and $c_3(n)$ were first computed by Williams  \cite{Wil96}. We give a general formula for the generating function of $c_m(n)$ for any height $m$. In particular, we are able to compute the \emph{exponential growth constant} or \emph{connective constant} $r_m=\lim_{n \rightarrow \infty} \sqrt[n]{c_m(n)}$ for any $m$. 

\begin{theorem}  \label{thm:main1} For each $m \geq 0$ there are constants $q_m$ and $r_m$ such that
\[
c_m(n) \sim q_m \cdot r_m^n,
\]
where
$ \displaystyle
1=r_0 < r_1 < r_2 < \cdots, \mathrm{ \ and \ } \lim_{m \rightarrow \infty} r_m = 1+\sqrt{2} \approx 2.4142\ldots.
$
The growth constant $r_m$ is the largest real root of the polynomial
%
\[
a_m(x) = \alpha_+(x) \beta_+(x)^m + \alpha_-(x) \beta_-(x)^m,
\]
where 
\begin{eqnarray*}
\alpha_\pm(x) &=& \pm \frac{x^4-2x^3-1}{2 \sqrt{(x^4-1)(x^2-2x-1)}} + \frac{1-x}2, \\
\beta_\pm(x) &=& \frac{-x^3+x^2-x-1 \pm \sqrt{(x^4-1)(x^2-2x-1)}}{2}.  
\end{eqnarray*}
\end{theorem}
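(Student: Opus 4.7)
The plan is a transfer-matrix analysis: encode monotone paths as walks in a finite state graph, derive a second-order linear recurrence in $m$ for the denominator of $C_m(x)$ (equivalently, for $a_m(x)$), and solve it in closed form using characteristic roots.

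First, I would encode each monotone path as a walk in a finite directed graph whose vertices are pairs $(h, d)$ with $h \in \{0, \ldots, m\}$ the current height and $d \in \{N, S, E\}$ the direction of the last step (and the starting configuration taken as $(0, E)$), and whose edges enforce the rules against retracing and against leaving the strip. Writing $T_m$ for the corresponding transition matrix, $C_m(x) = \sum_{n \ge 0} c_m(n) x^n = \mathbf{1}^\top(I - xT_m)^{-1}\mathbf{e}_{(0,E)}$ is rational in $x$, and $r_m$ is the reciprocal of the smallest positive root of $\delta_m(x) := \det(I - xT_m)$.

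Next, I would derive a second-order linear recurrence $\delta_m(x) = P(x)\delta_{m-1}(x) - Q(x)\delta_{m-2}(x)$ for explicit polynomials $P, Q$. This can be done either by a combinatorial decomposition of paths at their first and last visit to the top row $h = m$, or by direct cofactor expansion of $\det(I - xT_m)$ along the rows and columns corresponding to height $m$. Taking the reciprocal polynomial $a_m(x) := \pm\, x^{d_m}\delta_m(1/x)$ (with $d_m = \deg \delta_m = 3m+1$ and an alternating sign in $m$), one obtains the recurrence $a_m(x) = -(x^3-x^2+x+1)\,a_{m-1}(x) - x^4\, a_{m-2}(x)$, whose characteristic quadratic $t^2 + (x^3 - x^2 + x + 1)\,t + x^4 = 0$ has roots precisely the $\beta_\pm(x)$ of the statement, with $\beta_+ + \beta_- = -x^3+x^2-x-1$ and $\beta_+\beta_- = x^4$.

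The general solution of this recurrence is $a_m(x) = \alpha_+(x)\beta_+(x)^m + \alpha_-(x)\beta_-(x)^m$, and the coefficients $\alpha_\pm(x)$ are determined by the initial values $a_0(x) = 1 - x$ and $a_1(x) = x^4 - 2x^3 + x^2 - 1$ (coming from $\delta_0 = 1-x$ and $\delta_1 = (1-x)^2 - x^4$, both immediate from the small-$m$ transfer matrices). The two-by-two linear system $\alpha_+ + \alpha_- = a_0$ and $\alpha_+ \beta_+ + \alpha_- \beta_- = a_1$ has a unique solution that, when re-expressed via the sum and difference of $\beta_\pm$, produces exactly the explicit square-root expressions for $\alpha_\pm(x)$ in the statement.

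Finally, for the asymptotics: $T_m$ is an irreducible nonnegative matrix (all states communicate), so Perron-Frobenius gives a simple leading eigenvalue $r_m$ and a simple pole of $C_m(x)$ at $1/r_m$, hence $c_m(n) \sim q_m r_m^n$ by standard singularity analysis. Monotonicity $r_m < r_{m+1}$ follows because the state graph for height $m+1$ strictly contains that for $m$ as an irreducible subgraph, and strict enlargement of an irreducible nonnegative matrix strictly increases the Perron eigenvalue. For the limit, the direction-only transfer matrix on $\{N, S, E\}$ (obtained by dropping the height constraint) has characteristic polynomial $(\lambda - 1)(\lambda^2 - 2\lambda - 1)$ with largest eigenvalue $1+\sqrt 2$, giving the uniform upper bound $r_m \le 1 + \sqrt 2$; this bound is approached as $m \to \infty$ since one can embed direction-only walks into a strip of height $m$ by keeping the path safely away from the top and bottom for all but a vanishing fraction of its length. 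The main technical obstacle is verifying the explicit form of $P(x), Q(x)$ in the second-order recurrence for $\delta_m$ — an algebraic check that requires careful bookkeeping of the cofactor expansion over the height-tridiagonal block structure of $T_m$.
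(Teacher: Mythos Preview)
Your plan is essentially the paper's: the encoding by (height, last direction) is isomorphic to the paper's encoding by consecutive height pairs $(h_{i-1},h_i)$, and your recurrence $a_m = -(x^3-x^2+x+1)a_{m-1} - x^4 a_{m-2}$ with initial data $a_0=1-x$, $a_1=x^4-2x^3+x^2-1$ is exactly Lemma~2.3 of the paper, solved identically in Proposition~2.4. The Perron--Frobenius step for $c_m(n)\sim q_m r_m^n$ also matches. Your strict-monotonicity argument (principal subgraph of an irreducible nonnegative matrix has strictly smaller Perron root) is a clean alternative to the paper's argument, which instead combines $c_m(n)\le c_{m+1}(n)$ with the observation that a common root of $a_m$ and $a_{m+1}$ would propagate down the recurrence to $a_0,a_1$.

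The one place you genuinely diverge, and where your sketch is too thin, is the limit $r_m\to 1+\sqrt 2$. The upper bound via the $3\times 3$ direction-only matrix is fine. But ``embed direction-only walks into a strip of height $m$ by keeping the path away from the boundary for all but a vanishing fraction of its length'' does not work as stated: a direction-only walk of length $n$ can have height excursion up to $n$, and $r_m$ is a limit in $n$ with $m$ fixed, so there is no embedding of generic length-$n$ unconstrained walks into a fixed strip. To make your route rigorous you would need something like a block argument (concatenate height-$0$-to-height-$0$ half-plane walks of bounded length $L\le m$, then send $L\to\infty$) together with the fact that the half-plane growth constant is $1+\sqrt 2$. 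The paper avoids this entirely: it uses the closed form $a_m(r_m)=0$ to write $-\alpha_+(r_m)/\alpha_-(r_m)=(\beta_-(r_m)/\beta_+(r_m))^m$, takes logarithms, and shows that any limit $r\neq 1+\sqrt 2$ forces $\beta_+(r)=\beta_-(r)$, i.e.\ the discriminant $(x^4-1)(x^2-2x-1)$ vanishes at $r$, a contradiction. That analytic argument is short once you have the closed form you already derived, so you might prefer it over patching the embedding.
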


This theorem allows us to use computer software to easily compute $r_m$ for concrete values of $m$.
This description of the growth constant $r_m$ is the most explicit possible, because Galois theory tells us that there is no exact formula for it.
 
\bigskip

\textbf{B.} In Section \ref{sec:CAT(0)} we find a small bottleneck in the transition kernel $S_{m,n}$ of monotone paths, connected by the local moves described above. We prove:

\begin{theorem} \label{thm:main2} For each $m \geq 2$, the transition kernel $S_{m,n}$ has a small bottleneck of $n+1$ vertices, whose removal separates the graph into two components $(S_{m,n})_a$ and $(S_{m,n})_b$ of sizes
\[
|(S_{m,n})_a| \sim (1-C_m)|S_{m,n}| \qquad |(S_{m,n})_b| \sim C_m|S_{m,n}|
\]
where $C_m = 1/(r_m^2(r_m-1)^2)$. These constants satisfy
$
0.345\ldots \approx C_2 > C_3 > C_4 \geq \cdots$ and $\displaystyle \lim_{m \rightarrow \infty} C_m = 1.5-\sqrt{2} \approx 0.0858\ldots.
$
\end{theorem}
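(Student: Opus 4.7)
The plan is to exploit the CAT(0) cube complex structure of $\S_{m,n}$ via the Ardila--Owen--Sullivant correspondence \cite{AOS}: the vertices of $S_{m,n}$ are in bijection with the consistent order ideals of a poset with inconsistent pairs (PIP) $P_{m,n}$, and each element $p \in P_{m,n}$ indexes a hyperplane of $\S_{m,n}$ that separates the vertices whose ideal contains $p$ from those whose ideal does not. The bottleneck will arise from nesting two such hyperplanes.

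Guided by the form $C_m = 1/(r_m^2(r_m-1)^2)$ and the size $n+1$ of the bottleneck, I would identify a pair of comparable PIP elements $p_1 \prec p_2$ corresponding to two nested local features of a path --- for concreteness, the creation of a ``first'' corner at a sliding column $k$ near one end of the strip, and a second corner whose creation requires $p_1$ to already be present. The bottleneck $B$ is then defined to be the set of paths whose ideals contain $p_1$ but not $p_2$; such ideals are parametrized by the sliding parameter $k \in \{0, 1, \ldots, n\}$, yielding $|B| = n+1$.

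To verify that $B$ is a vertex separator, I would invoke the median-graph structure of the CAT(0) cube complex: in $S_{m,n}$, any edge path connecting a vertex whose ideal contains $p_2$ to one whose ideal misses $p_1$ must flip both the $p_1$- and $p_2$-coordinates, and a short case analysis on how the two local moves (corner switch and end flip) can alter these coordinates shows that such a path must pass through a vertex in $B$. This splits $S_{m,n} \setminus B$ into $(S_{m,n})_a$ (ideals missing $p_1$) and $(S_{m,n})_b$ (ideals containing $p_2$).

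The main obstacle is then the enumeration of the smaller component. The plan is to construct an explicit bijection decomposing each vertex of $(S_{m,n})_b$ around the fixed local configurations witnessing $p_2$, producing a triple $(a, P, b)$ where $a, b \geq 0$ measure ``offsets'' at each end and $P$ is a monotone path of length $j$, subject to $a + b + j = n - 4$. Summing and applying Theorem \ref{thm:main1} gives
\[
|(S_{m,n})_b| \sim \sum_{j=0}^{n-4} (n-3-j)\, q_m\, r_m^j \sim q_m \cdot \frac{r_m^{n-2}}{(r_m-1)^2} = C_m\, q_m\, r_m^n,
\]
using $\sum_{\ell \geq 0}(\ell+1) r_m^{-\ell} = r_m^2/(r_m-1)^2$. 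Since $|B| = n+1$ is negligible compared to $r_m^n$, the complementary estimate $|(S_{m,n})_a| \sim (1 - C_m)|S_{m,n}|$ is automatic. Finally, the monotonicity $C_2 > C_3 > \cdots$ and the limit $C_m \to 1.5 - \sqrt{2}$ follow because the function $x \mapsto 1/(x^2(x-1)^2)$ is strictly decreasing on $x > 1$ and $r_m \uparrow 1 + \sqrt{2}$ by Theorem \ref{thm:main1}.
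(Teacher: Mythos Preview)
Your enumeration of $(S_{m,n})_b$ and the asymptotic computation leading to $C_m = 1/(r_m^2(r_m-1)^2)$ are essentially the paper's: the paper identifies $(S_{m,n})_b$ with the paths whose second vertical step points down, and counts them as $u_m(n)=\sum_{k\ge 3}(k-2)c_m(n-k-1)$, which is exactly your sum after reindexing. The monotonicity and limit of $C_m$ are also handled the same way.

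The gap is in how you produce the bottleneck. You propose a \emph{comparable} pair $p_1\prec p_2$ in the PIP and take $B=\{\text{ideals containing }p_1\text{ but not }p_2\}$. That does give a separator, but in the coral PIP $C_{m,n}$ there is no choice of $p_1\prec p_2$ for which $|B|=n+1$ and $(S_{m,n})_b$ is the set you then enumerate. For instance, if $p_2=b=(\square\!\square,\,n-3)$ so that ``ideals containing $p_2$'' matches the paper's $\S_b$, then the only elements strictly below $b$ are the single cells $(\square,n-1),(\square,n-2),(\square,n-3)$; for each such $p_1$ the set $B$ contains all of $\S_a$ (ideals containing the vertical domino $a$, which contain $p_1$ but never $b$) and is therefore exponentially large, not of size $n+1$. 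Your ``sliding parameter $k$'' description of $B$ is really the description of the paths with at most one vertical step, and those are precisely the ideals containing \emph{neither} $a$ nor $b$.

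The paper's mechanism is different: it uses the \emph{inconsistent pair} $a\nleftrightarrow b$ (the vertical and horizontal dominoes with maximal labels). Because no consistent ideal contains both, the vertex set splits as $\S_a\sqcup \S_b\sqcup \S_{a\nleftrightarrow b}$, and since ideals in $\S_a$ and $\S_b$ differ in at least two elements, $\S_{a\nleftrightarrow b}$ separates them. Ideals containing neither $a$ nor $b$ can contain only length-$1$ snakes, hence are the $n+1$ initial segments of the backbone chain --- exactly the paths with at most one vertical step. Replacing your ``nested hyperplanes'' picture by this inconsistent-pair picture fixes the argument; everything downstream of that (your bijection, the convolution $\sum_j (n-3-j)c_m(j)$, and the asymptotics) then goes through as you wrote.
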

%
%

\bigskip

\textbf{C.} In Section \ref{sec:Markov} we use the results of A. and B. above to show that two natural Markov chains on the space of monotone paths, which we call the ``symmetric" and ``lazy simple" Markov chains, mix slowly. In each step of the symmetric Markov chain $M_{m,n}$, we choose a vertex of the path uniformly at random, and perform a local move there if it is available. In each step of the lazy simple Markov chain $N_{m,n}$, at each step we first decide whether to move with probability $p$, and if we do, we perform one of the available moves uniformly at random.

\begin{theorem} \label{thm:main3}
Let $m \geq 2$. For the symmetric Markov chain $M_{m,n}$, the stationary distribution is uniform, and the mixing time grows exponentially with $n$:
\[
\tau_{M_{m,n}}(\varepsilon)  \ge C \cdot r_m^n \ln(\varepsilon^{-1})
\]
For the lazy simple Markov chain $N^p_{m,n}$, the stationary distribution is proportional to the degree -- with $\pi(x) \propto \deg(x)$ for each lattice path $x$ -- and the mixing time grows exponentially with $n$:
\[
\tau_{N^p_{m,n}}(\varepsilon)  \ge D \cdot \frac1n r_m^n \ln(\varepsilon^{-1}).
\]
Here $0<\varepsilon<1$ is arbitrary, $C$ and $D$ are constants, and $r_m>1$ is the constant of Theorem \ref{thm:main1}.
\end{theorem}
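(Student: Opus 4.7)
The plan is to use the standard conductance lower bound for mixing times of a reversible Markov chain: if $A\subseteq \Omega$ satisfies $\pi(A)\leq 1/2$, then
\[
\tau(\varepsilon)\,\geq\, \frac{c}{\Phi(A)}\,\ln(\varepsilon^{-1})
\]
for an absolute constant $c>0$, where $\Phi(A)=Q(A,A^c)/\pi(A)$ is the bottleneck ratio. I would take $A=(S_{m,n})_b$, the smaller side of the separator $B$ from Theorem~\ref{thm:main2}; since $C_m\leq C_2\approx 0.345<1/2$, one has $\pi(A)<1/2$ asymptotically in both stationary distributions of interest.

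The stationary distributions are straightforward to identify. The symmetric chain $M_{m,n}$ has $P(x,y)=1/(n+1)$ on edges, so $P$ is symmetric and $\pi$ is uniform. The lazy simple chain $N^p_{m,n}$ has $P(x,y)=p/\deg(x)$ on edges and self-loop probability $1-p$, giving the standard random-walk stationary measure $\pi(x)\propto \deg(x)$. Both chains are reversible and aperiodic, so the conductance lower bound applies.

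Next I would exploit the separator property: every edge from $A$ to $A^c$ must have its $A^c$-endpoint in $B$, so
\[
|E(A,A^c)|\,\leq\,\sum_{v\in B}\deg_{S_{m,n}}(v).
\]
A combinatorial analysis of the $n+1$ bottleneck vertices, whose explicit form comes out of the proof of Theorem~\ref{thm:main2} via the PIP description of $\mathcal{S}_{m,n}$ from \cite{AOS}, should show that each admits only $O(1)$ available moves, so that $|E(A,A^c)|=O(n)$. Granted this, the two conductance estimates are immediate. For the symmetric chain, $Q(A,A^c)=|E(A,A^c)|/((n+1)|S_{m,n}|)=O(1/|S_{m,n}|)$ and $\pi(A)\to C_m>0$, so $\Phi(A)=O(r_m^{-n})$ by Theorem~\ref{thm:main1}, yielding $\tau_{M_{m,n}}(\varepsilon)\geq C r_m^n\ln(\varepsilon^{-1})$. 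For the lazy simple chain, writing $V_A=\sum_{x\in A}\deg(x)\geq |A|$, we have $\Phi(A)=p|E(A,A^c)|/V_A=O(n/|A|)=O(n/r_m^n)$, yielding $\tau_{N^p_{m,n}}(\varepsilon)\geq D(1/n)r_m^n\ln(\varepsilon^{-1})$; the extra factor $1/n$ is the price of replacing $|A|$ by the volume $V_A$ in the degree-weighted distribution.

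The main obstacle is the degree analysis for the bottleneck vertices: showing that $\sum_{v\in B}\deg(v)=O(n)$, i.e., that each of the $n+1$ separator configurations admits only a bounded number of local moves. This requires the explicit combinatorial description of $B$ produced in the proof of Theorem~\ref{thm:main2}, and in particular the PIP correspondence. Once that estimate is in hand, the mixing-time inequalities follow mechanically from the conductance lower bound combined with the enumerative asymptotic $|S_{m,n}|\sim q_m r_m^n$ of Theorem~\ref{thm:main1}.
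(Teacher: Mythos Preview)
Your proposal is correct and follows the same conductance-bottleneck strategy as the paper, with the same set $A=\S_b$ and the same asymptotic inputs from Theorems~\ref{thm:main1} and~\ref{thm:main2}. The one substantive difference is how you bound the number of edges crossing from $\S_b$ to its complement. You propose bounding $|E(A,A^c)|$ by $\sum_{v\in B}\deg(v)$ and then arguing that each of the $n+1$ bottleneck paths has $O(1)$ available moves; this is true (those paths have at most one vertical step, hence at most two switchable corners plus the end flip), but you correctly flag it as the step requiring the most work.

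The paper sidesteps this degree analysis entirely via the PIP description: since the vertices of $\S_{m,n}$ are the consistent order ideals of $C_{m,n}$ and edges correspond to adding or removing a single element, a vertex $I\in\S_{a\nleftrightarrow b}$ can have at most one neighbor in $\S_b$, namely $I\cup\{b\}$ (when that happens to be an order ideal). This immediately gives at most $n+1$ crossing edges with no case analysis of the underlying paths. So what you identify as the ``main obstacle'' dissolves once you use the order-ideal structure directly rather than translating back to path moves. Otherwise the two arguments coincide. (Minor point: in the paper's convention for $M_{m,n}$ the edge transition probability is $1/n$, not $1/(n+1)$, since the starting vertex is never selected; this does not affect the asymptotics.)
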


\section{\textsf{Enumeration of monotone paths in a strip}}\label{sec:enum}

Let $S_m$ be the strip of height $m$ that extends infinitely to the right:
\[
S_m = \{(x,y) \, | \, x \geq 0, \, 0 \leq y \leq m\}.
\]

\begin{definition}
A \emph{monotone path in a strip} is a lattice path that starts at $(0,0)$, takes steps $N =(0,1)$, $S=(0,-1)$ and $E=(1,0)$, never retraces steps, and stays within the strip.
\end{definition}

In this section we study the enumeration of monotone paths in a strip. 

\subsection{\textsf{The transfer-matrix method: monotone paths as walks in a graph}}

To enumerate monotone paths in a strip, we use the \emph{transfer-matrix method}, which we now briefly recall. For a thorough treatment of the transfer-matrix method, including the relevant proofs, see for example \cite[Chapter 4.7]{Sta11}

Suppose we are interested in a certain family of combinatorial objects, and we wish to find the number $a_n$ of objects of ``size" $n$. The idea is to construct a directed graph $G=(V,E)$ such that the objects of size $n$ can be encoded as -- that is, they are in bijection with -- the walks of length $n$ in $G$. If we succeed, then the enumeration problem becomes a linear algebra problem. If we let $A$ be the $V \times V$ adjacency matrix of $G$, given by
\[
A_{uv} = \text{ number of edges from $u$ to $v$} \qquad \text{ for } u,v \in V
\]
then the key observation of the transfer-matrix method  \cite[Theorem 4.7.1]{Sta11} is that
\[
A^n_{uv} = \text{ number of paths of length $n$ from $u$ to $v$} \qquad \text{ for } u,v \in V.
\]
Our enumeration problem then reduces to computing powers of the adjacency matrix. In particular, the eigenvalues of $A^0,A^1,A^2,\ldots$ control the growth of the sequence $a_0, a_1, a_2, \ldots$.

Let us now apply this philosophy to the problem that interests us.

\bigskip

\noindent
\textbf{A failed encoding of monotone paths as walks in a graph.}
A natural first idea is to encode a path by keeping track of the height $h_i$ of each node $i$. For example, the monotone path in Figure \ref{fig:arm} has node heights $0,0,0,1,1,2,2,3,3,3,2,2,1,1,2,3$. This sequence of heights can be viewed as a walk in the graph $F_m$ with vertices $0,1,\ldots,m$ and edges $i \rightarrow i-1$,\,  $i \rightarrow i$,\,  $i \rightarrow i+1$ whenever those vertices are in the range $\{0, \ldots, m\}$. Each monotone paths gives rise to a different walk in this graph $F_m$ starting at 0. However, not every such walk arises from a monotone path: a monotone path cannot contain an edge $i \rightarrow i+1$ (resp. $i \rightarrow i-1$) followed by the reverse edge $i+1 \rightarrow i$ (resp. $i-1 \rightarrow i$). For this reason, the graph $F_m$ does not provide the necessary bijective encoding.

\bigskip

\noindent
\textbf{A successful encoding of monotone paths as walks in a graph.} To resolve the issue above, we redundantly insert some memory into the bookkeeping procedure. We encode a path as a sequence of pairs: 
for the $i$th node, we keep track of the \emph{height pair} $(h_{i-1}, h_i)$ of heights of nodes $i-1$ and $i$. (We define $h_{-1}=0$.) 
For example, the monotone path in Figure \ref{fig:arm} is recorded by the successive height pairs $00, 00,  01, 11, 12, 22, 23, 33, 33, 32, 22, 21, 11, 12, 23$.\footnote{We write $ij$ for the pair $(i,j)$ when it introduces no confusion.}

Now we consider the directed graph whose vertices are the possible height pairs, and whose edges are the possible transitions between two consecutive height pairs.

\begin{definition} Let $m$ be a positive integer. The \emph{transfer graph} $G_m$ is the directed graph with


$\bullet$ vertices: $(i, i-1), \, (i,i), \, (i,i+1)$ whenever the indices are between $0$ and $m$, and

$\bullet$ edges: $(i,j) \rightarrow (j,k)$ unless $j=i \pm 1$ and $k=i$.

\end{definition}

The graph $G_2$ is illustrated in Figure \ref{fig:graph}.

\begin{figure}[h]
\begin{minipage}{0.6\textwidth}
\begin{center}
\resizebox{0.65\textwidth}{!}{\input{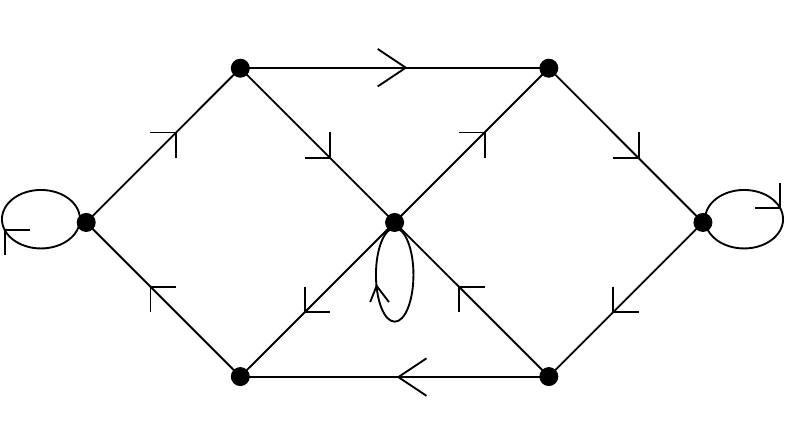_t}} 

\end{center}
\end{minipage}
\begin{minipage}{0.25\textwidth}
$
\bordermatrix[{[]}]{%
  & 00 & 01 & 10 & 11 & 12 & 21 & 22 \cr
00 &  1 & 1 & 0 & 0 & 0 & 0 & 0 \cr
01 &0 & 0 & 0 & 1 & 1 & 0 & 0 \cr
10 &1 & 0 & 0 & 0 & 0 & 0 & 0 \cr
11 & 0 & 0 & 1 & 1 & 1 & 0 & 0 \cr
12 & 0 & 0 & 0 & 0 & 0 & 0 & 1 \cr
21 & 0 & 0 & 1 & 1 & 0 & 0 & 0 \cr
22 & 0 & 0 & 0 & 0 & 0 & 1 & 1 }
$
%
\end{minipage}
\caption{The graph $G_2$ and its adjacency matrix $A_2$.\label{fig:graph}.}
\end{figure}

\begin{lemma}\label{lemma:transfermatrix}
The number $c_m(n)$ of monotone paths of length $n$ in a strip of height $m$ equals the number of walks of length $n$ in the transfer graph $G_m$ that start at vertex $(0,0)$.
\end{lemma}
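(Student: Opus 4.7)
The plan is to construct an explicit bijection between monotone paths of length $n$ in the strip of height $m$ and walks of length $n$ in $G_m$ starting at $(0,0)$, using the encoding suggested just before the statement. Given a path with nodes of heights $h_0=0, h_1, \ldots, h_n$, I would introduce the dummy height $h_{-1}:=0$ and send the path to the sequence of height pairs
\[
(h_{-1},h_0),\,(h_0,h_1),\,(h_1,h_2),\,\ldots,\,(h_{n-1},h_n),
\]
which has $n+1$ entries and thus represents a walk with $n$ transitions.

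First I would verify that this sequence is a walk in $G_m$. Since the path stays in the strip, every $h_i$ lies in $\{0,1,\ldots,m\}$, and since each step changes the height by at most one we have $|h_i-h_{i-1}|\leq 1$; hence each pair $(h_{i-1},h_i)$ is a vertex of $G_m$. Consecutive vertices share a coordinate in the pattern $(h_{i-1},h_i)\to(h_i,h_{i+1})$ required by an edge of $G_m$, and the walk begins at $(h_{-1},h_0)=(0,0)$.

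The heart of the argument is to match the forbidden-edge condition of $G_m$ with the non-retracing condition on monotone paths. The edge $(i,j)\to(j,k)$ is forbidden in $G_m$ exactly when $j=i\pm 1$ and $k=i$; translated back, this says step $i$ is vertical ($N$ or $S$) and step $i+1$ is the reverse vertical step at the same column, which is precisely what ``non-retracing'' forbids. I would emphasize that horizontal steps can never cause a retracing (there is no $W$ step), and these cases do correspond to allowed edges of $G_m$ since there $j=i$, which avoids the forbidden pattern.

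Finally I would check bijectivity. Injectivity is immediate: the heights $h_0,\ldots,h_n$ can be read off from the walk, and since $P_0=(0,0)$ the path is reconstructed step by step ($N$, $S$, or $E$ according to $h_i-h_{i-1}=+1,-1,0$). For surjectivity, any walk of length $n$ in $G_m$ starting at $(0,0)$ yields a sequence of heights in $\{0,\ldots,m\}$ that, by reversing the arguments above, builds a monotone path that stays in the strip and does not retrace.

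The main hurdle is really just careful bookkeeping in the retracing-versus-forbidden-edge correspondence, and in particular confirming that no other combinatorial coincidence hides among the edges of $G_m$ -- for instance checking that an $E$ step followed by any other step, or any step followed by an $E$ step, always corresponds to an allowed edge, which follows from the fact that horizontal steps yield $j=i$ and hence cannot trigger the forbidden condition $j=i\pm 1$.
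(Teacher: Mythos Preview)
Your proof is correct and follows essentially the same approach as the paper: both set up the bijection via the height-pair encoding $(h_{i-1},h_i)$ and identify the non-retracing condition with the forbidden edges $j=i\pm 1$, $k=i$ in $G_m$. Your write-up is more thorough than the paper's (which dispatches the argument in three sentences), explicitly checking that horizontal steps never trigger the forbidden pattern and spelling out injectivity and surjectivity.
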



\begin{proof}
For a monotone path of length $n$, let $0=h_0, h_1, h_2, \ldots, h_n$ be the heights of the nodes. Since the path cannot retrace steps, the sequence above cannot contain a consecutive subsequence $h,h+1,h$ or $h+1,h,h+1$. Therefore $(0,0) \rightarrow (0,h_1) \rightarrow (h_1,h_2) \rightarrow \cdots \rightarrow (h_{n-1},h_n)$ is a walk of length $n$ in $G_m$. Conversely, every walk of length $n$ starting at $(0,0)$ corresponds to such a monotone path.
\end{proof}

\subsection{\textsf{The characteristic polynomial}}

Having established the encoding of monotone paths as walks in a transfer graph $G_m$, we proceed with the linear algebraic analysis. 
Let $A_m$ be the $(3m+1) \times (3m+1)$ adjacency matrix of $G_m$; its non-zero entries are:
\[
(A_m)_{(i,j),(j,k)} = 1 \text{ unless } j = i \pm1 \text{ and } i=k
\]
all other entries equal 0.
Let its characteristic polynomial be
\[
a_m(x) = \det(A_m - xI).
\]
The adjacency matrix $A_2$ is shown in Figure \ref{fig:graph}, and its characteristic polynomial is $a_2(x) = -x^7+3x^6-3x^5+x^4+2x^3-2x^2+x+1$.

\begin{lemma} \label{lem:rec}
The characteristic polynomial $a_m(x) = \det(A_m - xI) $ of the adjacency matrix of the graph $G_m$ is given by the recurrence 
%
\[
a_m(x) = \begin{cases}
1-x & m=0 \\
x^4-2x^3+x^2-1 & m=1 \\
(-x^3+x^2-x-1) a_{m-1}(x) - x^4 a_{m-2}(x) &  m \geq 2.
\end{cases}
\]

%
%
%
%
\end{lemma}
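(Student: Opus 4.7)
The plan is to verify the base cases $m = 0, 1$ directly, and for $m \ge 2$ to prove the recurrence by a controlled double application of the Schur complement formula that exploits the block-tridiagonal structure of $A_m$. The key structural observation is that the vertices of $G_m$ partition into ``levels'' $V_0 = \{(0,0)\}$ and $V_k = \{(k-1,k),\, (k,k-1),\, (k,k)\}$ for $1 \le k \le m$, and that every edge of $G_m$ either stays within a level or joins two consecutive levels. Moreover, for $k \ge 1$ the within-level block $L$ of $A_m$, the ``up'' block $U$ (edges from $V_k$ to $V_{k+1}$), and the ``down'' block $D$ (edges from $V_k$ to $V_{k-1}$) are fixed sparse $3 \times 3$ matrices independent of $k$ and of $m$, which I would write down explicitly. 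The only support facts needed later are that $D$ is non-zero only in its second row and $U$ is non-zero only in its first column.

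For $m \ge 2$, I partition the rows and columns of $A_m - xI$ into the ``core'' $V_0 \cup \cdots \cup V_{m-2}$, then $V_{m-1}$, then $V_m$, yielding
\[
A_m - xI \;=\; \begin{pmatrix} B & P' \\ Q' & W \end{pmatrix}, \qquad W \;=\; \begin{pmatrix} L - xI & U \\ D & L - xI \end{pmatrix},
\]
where $B = A_{m-2} - xI$ and $P', Q'$ are supported only in the block coupling $V_{m-2}$ to $V_{m-1}$. The Schur complement with pivot $B$, together with $\det B = a_{m-2}$, gives
\[
a_m \;=\; a_{m-2} \cdot \det\!\begin{pmatrix} \widetilde M & U \\ D & L - xI \end{pmatrix}, \qquad \widetilde M \;=\; (L - xI) - Q' B^{-1} P'.
\]
The sparsity of $D$ and $U$ forces $Q' B^{-1} P'$ to be a rank-one matrix of the single-entry form $\gamma\, e_2 e_1^T$ for a scalar $\gamma = \gamma(x)$, so $\widetilde M$ differs from $L - xI$ in only one entry. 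This conclusion holds uniformly for all $m \ge 2$, including the boundary case $m = 2$ where $|V_0| = 1$.

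A second Schur complement, this time on the bottom-right $L - xI$ block, reduces the $6 \times 6$ determinant to a $3 \times 3$ computation. A short check shows $U(L - xI)^{-1} D = \tfrac{1}{x^2(1-x)} \tilde u \tilde v^T$ is again rank one, for explicit vectors $\tilde u, \tilde v \in \mathbb{R}^3$. Using the matrix-determinant lemma and the identity $\tilde v^T \mathrm{adj}(\widetilde M)\, \tilde u = \det(\widetilde M + \tilde u \tilde v^T) - \det(\widetilde M)$, one expands $\det \widetilde M$ and $\det(\widetilde M + \tilde u \tilde v^T)$ directly (both are polynomials in $x$ linear in $\gamma$) to obtain
\[
\det\!\begin{pmatrix} \widetilde M & U \\ D & L - xI \end{pmatrix} \;=\; (-x^3 + x^2 - x - 1)\, \det(\widetilde M) \;-\; x^4.
\]
Applying the same core Schur complement to $A_{m-1} - xI$ identifies $\det \widetilde M = a_{m-1}/a_{m-2}$; multiplying through by $a_{m-2}$ produces the claimed recurrence.

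The main obstacle is the bookkeeping between the two Schur complements: one must verify that $Q' B^{-1} P'$ genuinely collapses to the single-entry form $\gamma e_2 e_1^T$ (so that $\widetilde M$ is a one-parameter perturbation of $L-xI$ whose determinant can be computed by hand), and that the small-$m$ boundary $m = 2$ still fits into the framework without special treatment. All Schur complement manipulations take place over $\mathbb{Q}(x)$; since the final identity is between polynomials, generic invertibility of $B$ and of $L - xI$ is all that is needed.
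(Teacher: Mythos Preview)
Your argument is correct, and the route is genuinely different from the paper's. The paper groups the vertices by their \emph{first} coordinate, writes $A_m - xI$ as a block-tridiagonal matrix with $3\times3$ blocks $X,Y,Z$ (truncated at the ends), and then performs a direct cofactor expansion. This produces an auxiliary determinant $b_m(x)$ and a pair of coupled first-order recurrences
\[
a_m = (x^2-x^3)\,a_{m-1} + b_{m-1}, \qquad b_m = -x^2\,a_{m-1} - (1+x)\,b_{m-1},
\]
from which $b$ is eliminated to obtain the second-order recurrence for $a_m$.

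Your approach instead groups vertices by level $V_k$ and exploits a structural fact the paper never isolates: the off-diagonal blocks $U$ and $D$ are rank one (supported in a single column and a single row, respectively). This is exactly what makes both Schur corrections rank one, so that $\widetilde{M}$ is a one-parameter perturbation of $L-xI$ and the matrix determinant lemma finishes the $6\times6$ computation by hand. The identification $\det\widetilde{M}=a_{m-1}/a_{m-2}$ then yields the recurrence \emph{directly}, with no auxiliary sequence to eliminate. What you gain is a cleaner conceptual picture that would transfer to other block-tridiagonal transfer matrices with rank-one coupling; what the paper's approach buys is that it needs nothing beyond cofactor expansion, at the cost of tracking $b_m$ through several pages of minors. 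Your handling of the boundary case $m=2$ (where the bottom level has size $1$ rather than $3$) is sound: the coupling there is still rank one, so the same $\gamma e_2 e_1^T$ form persists and no separate argument is required.
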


\begin{proof}
Consider the $3 \times 3$ matrices 
\[
X = 
\begin{bmatrix}
1 & 1 & 0 \\
0 & 0 & 0 \\
0 & 0 & 0 
\end{bmatrix},
\qquad 
Y = 
\begin{bmatrix}
-x & 0 & 0 \\
1 & 1-x & 1 \\
0 & 0 & -x 
\end{bmatrix}, 
\qquad 
Z = 
\begin{bmatrix}
0 & 0 & 0 \\
0 & 0 & 0 \\
0 & 1 & 1 
\end{bmatrix}
\]

For a matrix $S$, let $\overline S_{i,}, \overline S_{,j}$ and $\overline S_{i,j}$ respectively denote the matrix $S$ with its $i$-th row, $j$-th column or the $i$-th row and $j$-th column deleted. 
Then one can verify that $A_m-xI$ is the $(3m+1) \times (3m+1)$ matrix with block format given below.
\[
A_m-xI = 
\begin{bmatrix}
\overline Y_{1,1} & \overline Z_{1,} & 0 & 0 & \cdots & 0 & 0 & 0 \\
\overline X_{,1} & Y & Z & 0 & \cdots & 0 & 0 & 0 \\
0 & X & Y & Z & \cdots & 0 & 0 & 0 \\
\vdots & \vdots & \vdots & \vdots & \ddots & \vdots & \vdots & \vdots \\
0 & 0 & 0 & 0 & \cdots & Y & Z & 0\\
0 & 0 & 0 & 0 & \cdots & X & Y & \overline Z_{,3} \\
0 & 0 & 0 & 0 & \cdots & 0 & \overline X_{3,} & \overline Y_{3,3} \\

\end{bmatrix}
\]
Expanding by cofactors,

%

\begin{eqnarray*}
a_m(x) &=& \det (A_m-xI) \\
&=& 
\left|\,\begin{array}{ccccccc}
1-x &1 & 0 &0 & 0 &0& \cdots \\
0 & -x & 0 &1 &1& 0 & \cdots \\
1 & 0 & -x &0 &0 &0 & \cdots\\
0 & 0 & 1& 1-x&1 & 0 &\cdots \\
0 & 0 &0 & 0&-x &0 & \cdots \\
0&0&0&0&0&0& \cdots\\
\vdots & \vdots &\vdots & \vdots& \vdots &\vdots & \ddots \\
\end{array}\,\right| \\
&=& 
(1-x) 
\left|\,\begin{array}{ccccccc}
\cellcolor{gray}&\cellcolor{lightgray} & \cellcolor{lightgray} & \cellcolor{lightgray} & \cellcolor{lightgray}&\cellcolor{lightgray}& \cellcolor{lightgray} \cdots \\
\cellcolor{lightgray} & -x & 0 &1 &1& 0 & \cdots \\
\cellcolor{lightgray}& 0 & -x &0 &0 &0 & \cdots\\
\cellcolor{lightgray} & 0 & 1& 1-x&1 & 0 &\cdots \\
\cellcolor{lightgray}& 0 &0 & 0&-x &0 & \cdots \\
\cellcolor{lightgray}&0&0&0&0&0& \cdots\\
\cellcolor{lightgray} \vdots & \vdots &\vdots & \vdots& \vdots &\vdots &\ddots \\
\end{array}\,\right|
-
(1) 
\left|\,\begin{array}{ccccccc}
\cellcolor{lightgray} &\cellcolor{gray} & \cellcolor{lightgray} &\cellcolor{lightgray}& \cellcolor{lightgray} &\cellcolor{lightgray}& \cellcolor{lightgray}\cdots \\
0 & \cellcolor{lightgray}  & 0 &1 &1& 0 & \cdots \\
1 & \cellcolor{lightgray}& -x &0 &0 &0 & \cdots\\
0 & \cellcolor{lightgray}& 1& 1-x&1 & 0 &\cdots \\
0 & \cellcolor{lightgray}&0 & 0&-x &0 & \cdots \\
0&\cellcolor{lightgray}&0&0&0&0& \cdots\\
\vdots &\cellcolor{lightgray} \vdots &\vdots & \vdots& \vdots &\vdots & \ddots \\
\end{array}\,\right| \\
&=& (1-x)(-x) 
\left|\,\begin{array}{ccccccc}
\cellcolor{gray} &\cellcolor{lightgray} & \cellcolor{lightgray} & \cellcolor{lightgray} & \cellcolor{lightgray} &\cellcolor{lightgray}& \cellcolor{lightgray} \cdots \\
\cellcolor{lightgray} & \cellcolor{gray}  & \cellcolor{lightgray} &\cellcolor{lightgray} &\cellcolor{lightgray}& \cellcolor{lightgray}&\cellcolor{lightgray} \cdots \\
\cellcolor{lightgray} & \cellcolor{lightgray} & -x &0 &0 &0 & \cdots\\
\cellcolor{lightgray} & \cellcolor{lightgray} & 1& 1-x&1 & 0 &\cdots \\
\cellcolor{lightgray} & \cellcolor{lightgray} &0 & 0&-x &0 & \cdots \\
\cellcolor{lightgray}&\cellcolor{lightgray}&0&0&0&0& \cdots\\
\cellcolor{lightgray} \vdots & \cellcolor{lightgray}\vdots &\vdots & \vdots& \vdots &\vdots & \ddots\\
\end{array}\,\right| 
-
(1)(-1) 
\left|\,\begin{array}{ccccccc}
\cellcolor{lightgray} &\cellcolor{gray} & \cellcolor{lightgray}&\cellcolor{lightgray} & \cellcolor{lightgray} &\cellcolor{lightgray}& \cellcolor{lightgray}\cdots \\
\cellcolor{lightgray} & \cellcolor{lightgray}  & 0 &1 &1& 0 & \cdots \\
\cellcolor{gray} & \cellcolor{lightgray} & \cellcolor{lightgray} -x &\cellcolor{lightgray} &\cellcolor{lightgray} &\cellcolor{lightgray}0 & \cellcolor{lightgray}\cdots\\
\cellcolor{lightgray} & \cellcolor{lightgray} & 1& 1-x&1 & 0 &\cdots \\
\cellcolor{lightgray} & \cellcolor{lightgray} &0 & 0&-x &0 & \cdots \\
\cellcolor{lightgray}&\cellcolor{lightgray}&0&0&0&0& \cdots\\
\cellcolor{lightgray} \vdots &\cellcolor{lightgray} \vdots &\vdots & \vdots& \vdots &\vdots & \ddots\\
\end{array}\,\right|  
\end{eqnarray*}

\begin{eqnarray*}
&=& (1-x)(-x)(-x) 
\left|\,\begin{array}{ccccccc}
\cellcolor{gray} &\cellcolor{lightgray} & \cellcolor{lightgray} & \cellcolor{lightgray} & \cellcolor{lightgray} &\cellcolor{lightgray}& \cellcolor{lightgray} \cdots \\
\cellcolor{lightgray} & \cellcolor{gray}  & \cellcolor{lightgray} &\cellcolor{lightgray} &\cellcolor{lightgray}& \cellcolor{lightgray} &\cellcolor{lightgray} \cdots \\
\cellcolor{lightgray} & \cellcolor{lightgray} & \cellcolor{gray} &\cellcolor{lightgray} &\cellcolor{lightgray} &\cellcolor{lightgray} & \cellcolor{lightgray}\cdots\\
\cellcolor{lightgray} & \cellcolor{lightgray} & \cellcolor{lightgray}& 1-x&1 & 0 &\cdots \\
\cellcolor{lightgray} & \cellcolor{lightgray} &\cellcolor{lightgray} & 0&-x &0 & \cdots \\
\cellcolor{lightgray}&\cellcolor{lightgray}&\cellcolor{lightgray}&0&0&0& \cdots\\
\cellcolor{lightgray} \vdots & \cellcolor{lightgray}\vdots &\cellcolor{lightgray} \vdots & \vdots& \vdots &\vdots & \ddots\\
\end{array}\,\right|  +  b_{m-1}(x)
\end{eqnarray*} 
where $b_m(x) = \det(B_m(x))$ for the $(3m+2) \times (3m+2)$ matrix 
\[
B_m(x) = 
\begin{bmatrix}
W & Z & \cdots & 0 \\
X & Y & \cdots & 0 \\
\vdots & \vdots & \ddots & \vdots \\
0 & 0 & \cdots & \overline Y_{3,3}
\end{bmatrix}
\qquad \text{where} \qquad 
W = 
\begin{bmatrix}
0 & 1 & 1 \\
1 & 1-x & 1 \\
0 & 0 & -x
\end{bmatrix}.
\]
Therefore we have
\[
a_m(x) = x^2(1-x) a_{m-1}(x)  + b_{m-1}(x).
\]

Now, subtracting row 1 of $B_m(x)$ from row 2 and expanding by cofactors, we obtain the following:

\begin{eqnarray*}
b_m(x) &=& \det B_m(x) \\
&=&  
\begin{vmatrix}
0 &1 & 1 &0 & 0 &0& \cdots \\
1 & 1-x & 1&0 &0& 0 & \cdots \\
0 & 0 & -x &0 &1 &1 & \cdots\\
1 & 1 & 0& -x&0 & 0 &\cdots \\
0 & 0 &0 & 1&1-x &1 & \cdots \\
0&0&0&0&0&-x& \cdots\\
\vdots & \vdots &\vdots & \vdots& \vdots &\vdots & \ddots \\
\end{vmatrix} 
%
%
= \begin{vmatrix}
0 &1 & 1 &0 & 0 &0& \cdots \\
1 & -x & 0&0 &0& 0 & \cdots \\
0 & 0 & -x &0 &1 &1 & \cdots\\
1 & 1 & 0& -x&0 & 0 &\cdots \\
0 & 0 &0 & 1&1-x &1 & \cdots \\
0&0&0&0&0&-x& \cdots\\
\vdots & \vdots &\vdots & \vdots& \vdots &\vdots & \ddots \\
\end{vmatrix} \\
&=&
(-1)  
\left|\,\begin{array}{ccccccc}
\cellcolor{lightgray} &\cellcolor{gray} & \cellcolor{lightgray} &\cellcolor{lightgray} & \cellcolor{lightgray} &\cellcolor{lightgray}& \cellcolor{lightgray}\cdots \\
1 & \cellcolor{lightgray} & 0&0 &0& 0 & \cdots \\
0 & \cellcolor{lightgray} & -x &0 &1 &1 & \cdots\\
1 & \cellcolor{lightgray} & 0& -x&0 & 0 &\cdots \\
0 & \cellcolor{lightgray} &0 & 1&1-x &1 & \cdots \\
0&\cellcolor{lightgray}&0&0&0&-x& \cdots\\
\vdots & \cellcolor{lightgray}\vdots &\vdots & \vdots& \vdots &\vdots & \ddots \\
\end{array}\,\right|+
(1)  
\left|\,\begin{array}{ccccccc}
\cellcolor{lightgray} &\cellcolor{lightgray} & \cellcolor{gray} &\cellcolor{lightgray} & \cellcolor{lightgray} &\cellcolor{lightgray}& \cellcolor{lightgray} \cdots \\
1 & -x & \cellcolor{lightgray}&0 &0& 0 & \cdots \\
0 & 0 &\cellcolor{lightgray} &0 &1 &1 & \cdots\\
1 & 1 & \cellcolor{lightgray}& -x&0 & 0 &\cdots \\
0 & 0 &\cellcolor{lightgray}& 1&1-x &1 & \cdots \\
0&0&\cellcolor{lightgray}&0&0&-x& \cdots\\
\vdots & \vdots &\cellcolor{lightgray}\vdots & \vdots& \vdots &\vdots & \ddots \\
\end{array}\,\right|  
\end{eqnarray*}
Subtracting the first (non-grayed) row from the third (non-grayed) row in the second matrix above, we obtain that $b_m(x)$ equals
\begin{eqnarray*}
&=&
(-1)(-x)  
\left|\,\begin{array}{ccccccc}
\cellcolor{lightgray} &\cellcolor{gray} & \cellcolor{lightgray} &\cellcolor{lightgray} & \cellcolor{lightgray} &\cellcolor{lightgray}& \cellcolor{lightgray}\cdots \\
1 & \cellcolor{lightgray} & \cellcolor{lightgray}&0 &0& 0 & \cdots \\
\cellcolor{lightgray} & \cellcolor{lightgray} & \cellcolor{gray} &\cellcolor{lightgray} &\cellcolor{lightgray} &\cellcolor{lightgray} & \cellcolor{lightgray} \cdots\\
1 & \cellcolor{lightgray} & \cellcolor{lightgray}& -x&0 & 0 &\cdots \\
0 & \cellcolor{lightgray} &\cellcolor{lightgray} & 1&1-x &1 & \cdots \\
0&\cellcolor{lightgray}&\cellcolor{lightgray}&0&0&-x& \cdots\\
\vdots & \cellcolor{lightgray}\vdots &\cellcolor{lightgray}\vdots & \vdots& \vdots &\vdots & \ddots \\
\end{array}\,\right| 
+ (1)  
\left|\,\begin{array}{ccccccc}
\cellcolor{lightgray} &\cellcolor{lightgray} & \cellcolor{gray} &\cellcolor{lightgray} & \cellcolor{lightgray} &\cellcolor{lightgray}& \cellcolor{lightgray} \cdots \\
1 & -x & \cellcolor{lightgray}&0 &0& 0 & \cdots \\
0 & 0 &\cellcolor{lightgray}  &0 &1 &1 & \cdots\\
0 & 1+x & \cellcolor{lightgray}& -x&0 & 0 &\cdots \\
0 & 0 &\cellcolor{lightgray} & 1&1-x &1 & \cdots \\
0&0&\cellcolor{lightgray}&0&0&-x& \cdots\\
\vdots & \vdots &\cellcolor{lightgray}\vdots & \vdots& \vdots &\vdots & \ddots \\
\end{array}\,\right|\\
&=&
(-1)(-x)(1)  
\left|\,\begin{array}{ccccccc}
\cellcolor{lightgray} &\cellcolor{gray} & \cellcolor{lightgray} &\cellcolor{lightgray} & \cellcolor{lightgray} &\cellcolor{lightgray}& \cellcolor{lightgray}\cdots \\
\cellcolor{gray} & \cellcolor{lightgray} & \cellcolor{lightgray}&\cellcolor{lightgray} &\cellcolor{lightgray}& \cellcolor{lightgray} & \cellcolor{lightgray}\cdots \\
\cellcolor{lightgray} & \cellcolor{lightgray} & \cellcolor{gray} &\cellcolor{lightgray} &\cellcolor{lightgray} &\cellcolor{lightgray} & \cellcolor{lightgray} \cdots\\
\cellcolor{lightgray} & \cellcolor{lightgray} & \cellcolor{lightgray}& -x&0 & 0 &\cdots \\
\cellcolor{lightgray} & \cellcolor{lightgray} &\cellcolor{lightgray} & 1&1-x &1 & \cdots \\
\cellcolor{lightgray}&\cellcolor{lightgray}&\cellcolor{lightgray}&0&0&-x& \cdots\\
\cellcolor{lightgray}\vdots & \cellcolor{lightgray}\vdots &\cellcolor{lightgray}\vdots & \vdots& \vdots &\vdots & \ddots \\
\end{array}\,\right| 
+
(1)(1)  
\left|\,\begin{array}{ccccccc}
\cellcolor{lightgray} &\cellcolor{lightgray} & \cellcolor{gray} &\cellcolor{lightgray} & \cellcolor{lightgray}&\cellcolor{lightgray}& \cellcolor{lightgray} \cdots \\
\cellcolor{gray} & \cellcolor{lightgray} & \cellcolor{lightgray}&\cellcolor{lightgray} &\cellcolor{lightgray}& \cellcolor{lightgray} & \cellcolor{lightgray}\cdots \\
\cellcolor{lightgray} & 0 &\cellcolor{lightgray}  &0 &1 &1 & \cdots\\
\cellcolor{lightgray} & 1+x & \cellcolor{lightgray}& -x&0 & 0 &\cdots \\
\cellcolor{lightgray} & 0 &\cellcolor{lightgray} & 1&1-x &1 & \cdots \\
\cellcolor{lightgray}&0&\cellcolor{lightgray}&0&0&-x& \cdots\\
\cellcolor{lightgray} \vdots & \vdots &\cellcolor{lightgray}\vdots & \vdots& \vdots &\vdots & \ddots \\
\end{array}\,\right| \\
&=&
(-1)(-x)(1)(-x)
\left|\,\begin{array}{ccccccc}
\cellcolor{lightgray} &\cellcolor{gray} & \cellcolor{lightgray} &\cellcolor{lightgray} & \cellcolor{lightgray} &\cellcolor{lightgray}& \cellcolor{lightgray}\cdots \\
\cellcolor{gray} & \cellcolor{lightgray} & \cellcolor{lightgray}&\cellcolor{lightgray} &\cellcolor{lightgray}& \cellcolor{lightgray} & \cellcolor{lightgray}\cdots \\
\cellcolor{lightgray} & \cellcolor{lightgray} & \cellcolor{gray} &\cellcolor{lightgray} &\cellcolor{lightgray} &\cellcolor{lightgray} & \cellcolor{lightgray} \cdots\\
\cellcolor{lightgray} & \cellcolor{lightgray} & \cellcolor{lightgray}& \cellcolor{gray}&\cellcolor{lightgray}& \cellcolor{lightgray} &\cellcolor{lightgray}\cdots \\
\cellcolor{lightgray} & \cellcolor{lightgray} &\cellcolor{lightgray} & \cellcolor{lightgray}&1-x &1 & \cdots \\
\cellcolor{lightgray}&\cellcolor{lightgray}&\cellcolor{lightgray}&\cellcolor{lightgray}&0&-x& \cdots\\
\cellcolor{lightgray}\vdots & \cellcolor{lightgray}\vdots &\cellcolor{lightgray}\vdots &\cellcolor{lightgray} \vdots& \vdots &\vdots & \ddots \\
\end{array}\,\right| 
+
(1)(1)(-1-x) 
\left|\,\begin{array}{ccccccc}
\cellcolor{lightgray} &\cellcolor{lightgray} & \cellcolor{gray} &\cellcolor{lightgray} & \cellcolor{lightgray} &\cellcolor{lightgray}& \cellcolor{lightgray} \cdots \\
\cellcolor{gray} & \cellcolor{lightgray} & \cellcolor{lightgray}&\cellcolor{lightgray} &\cellcolor{lightgray}& \cellcolor{lightgray} & \cellcolor{lightgray}\cdots \\
\cellcolor{lightgray} & \cellcolor{lightgray} &\cellcolor{lightgray} &0 &1 &1 & \cdots\\
\cellcolor{lightgray} & \cellcolor{gray} & \cellcolor{lightgray}& \cellcolor{lightgray}&\cellcolor{lightgray} & \cellcolor{lightgray}&\cellcolor{lightgray}\cdots \\
\cellcolor{lightgray} & \cellcolor{lightgray} &\cellcolor{lightgray} & 1&1-x &1 & \cdots \\
\cellcolor{lightgray}&\cellcolor{lightgray}&\cellcolor{lightgray}&0&0&-x& \cdots\\
\cellcolor{lightgray} \vdots &\cellcolor{lightgray} \vdots &\cellcolor{lightgray}\vdots & \vdots& \vdots &\vdots & \ddots \\
\end{array}\,\right|, \\
 &=& (-1)(-x)(1)(-x) \det(A_{m-1} - xI) + (1)(1)(-1-x) \det B_{m-1}(x)  
\end{eqnarray*}
Thus we have the recursion formulas
\begin{eqnarray*}
a_m(x) &=& (x^2-x^3) a_{m-1}(x) + b_{m-1}(x) \\
b_m(x) &=& -x^2 a_{m-1}(x) - (1+x) b_{m-1}(x)
\end{eqnarray*}
The first equation gives a formula for each term of the $b$-sequence in terms of the $a$-sequence. Substitituting that formula for $b_m$ and $b_{m-1}$ in the second equation gives the desired recurrence for $a_m(x)$. 
\end{proof}

The first few characteristic polynomials $a_m(x)$ are shown in Table \ref{tab:largest_root}. We now give an explicit formula for them. 

\begin{proposition}\label{prop:charpoly}
The polynomial $a_m(x)$ is given explicitly by the formula
%
\[
a_m(x) = \alpha_+(x) \beta_+(x)^m + \alpha_-(x) \beta_-(x)^m
\]
for all $m \geq 0$, where 
\begin{eqnarray*}
\alpha_\pm(x) &=& \pm \frac{x^4-2x^3-1}{2 \sqrt{(x^4-1)(x^2-2x-1)}} + \frac{1-x}2, \\
\beta_\pm(x) &=& \frac{-x^3+x^2-x-1 \pm \sqrt{(x^4-1)(x^2-2x-1)}}{2}.  
\end{eqnarray*}
for any 
$x \notin \{\pm 1, \pm i, 1 \pm \sqrt{2}\}$.

\end{proposition}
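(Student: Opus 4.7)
The proposition asserts a closed form for the sequence $(a_m(x))_{m \geq 0}$, which by Lemma \ref{lem:rec} satisfies a two-term linear recurrence in $m$ with coefficients in $\mathbb{Q}(x)$. The plan is the textbook one: diagonalize the recurrence via its characteristic equation.

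First, I would write down the characteristic equation of the recurrence, namely
$$t^2 + (x^3 - x^2 + x + 1)\,t + x^4 = 0,$$
and verify by direct expansion that its discriminant factors as
$$(x^3 - x^2 + x + 1)^2 - 4x^4 = (x^4 - 1)(x^2 - 2x - 1).$$
This is the only algebraic surprise of the proof; once this factorization is in hand, the roots of the characteristic equation are manifestly the two quantities $\beta_\pm(x)$ appearing in the statement. Observe also that the excluded set $\{\pm 1, \pm i, 1\pm\sqrt{2}\}$ is precisely the zero locus of this discriminant, so outside it the two roots are distinct.

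Second, I would invoke the general theory of linear recurrences: because $\beta_+(x) \neq \beta_-(x)$, every solution of the recurrence must take the form $a_m(x) = \alpha_+(x) \beta_+(x)^m + \alpha_-(x) \beta_-(x)^m$ for some $\alpha_\pm(x)$, and these are pinned down by the initial conditions $a_0(x) = 1-x$ and $a_1(x) = x^4 - 2x^3 + x^2 - 1$. To exploit the symmetry of the answer, I would parametrize $\alpha_\pm(x) = \tfrac{1-x}{2} \pm \gamma(x)$, so that the $m=0$ equation becomes a tautology. The $m=1$ equation, after using the Vieta identities $\beta_+ + \beta_- = -x^3+x^2-x-1$ and $\beta_+ - \beta_- = \sqrt{(x^4-1)(x^2-2x-1)}$, becomes a single linear equation in $\gamma(x)$ with explicit solution, to be matched with the claimed formula.

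The only real work is the bookkeeping in two polynomial identities: the discriminant factorization above, and the simplification showing that $2\gamma(x)\sqrt{(x^4-1)(x^2-2x-1)}$ equals $x^4 - 2x^3 - 1$. Neither step is conceptual, and I do not expect any genuine obstacle; both reduce to elementary expansion and grouping of terms.
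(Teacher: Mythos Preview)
Your proposal is correct and follows essentially the same approach as the paper: solve the linear recurrence from Lemma~\ref{lem:rec} via its characteristic equation, factor the discriminant as $(x^4-1)(x^2-2x-1)$, and determine $\alpha_\pm$ from the initial conditions $a_0,a_1$. Your parametrization $\alpha_\pm = \tfrac{1-x}{2} \pm \gamma(x)$ is a nice computational shortcut, but otherwise the argument is the same.
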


\begin{proof}
The relation 
\[
a_m(x) = (-x^3+x^2-x-1) a_{m-1}(x) - x^4 a_{m-2}(x) \qquad \text{ for } m \geq 2
\]
is a linear recurrence with constant coefficients in the field of power series in $x$.
To solve it, we need to compute the characteristic polynomial of the recurrence:
\[
\beta^2 + (x^3-x^2+x+1)\beta + x^4 = 0,
\]
whose discriminant is 
\[
\gamma(x) := (x^3-x^2+x+1)^2 - 4x^4 = x^6-2x^5-x^4-x^2+2x+1 = (x^4-1)(x^2-2x-1);
\]
its roots are $\pm 1, \pm i, 1 \pm \sqrt{2}$.

For $\gamma(x) \neq 0$, this polynomial has two different roots $\beta^{\pm}$, given in the statement of the proposition. The general theory of recurrences tells us that there must exist constants $\alpha_{\pm}$ such that
\[
a_n = \alpha_+ \beta_+^n +  \alpha_- \beta_-^n
\]
for all natural numbers $n$. Substituting $n=0, 1$ gives the system of equations
\begin{eqnarray*}
1-x &=& \alpha_+ + \alpha_- \\
x^4-2x^3+x^2-1  &=&  \alpha_+ \beta_+ +  \alpha_- \beta_-
\end{eqnarray*}
whose solution for $\alpha_+$ and $\alpha_-$ is as given. 
\end{proof}

It feels like a small miracle that the discriminant $\gamma(x)$ arising in this problem has such a nice factorization; it would be interesting to explain this conceptually. 

\begin{proposition}\label{prop:c_m}
The number $c_m(n)$ of monotone paths of length $n$ in the strip of height $m$ is the sum of the entries in row $00$ of $A_m^n$; equivalently, 
\[
c_m(n) = [1 0 \cdots 0] A_m^n [1 1 \cdots 1]^T.
\]
\end{proposition}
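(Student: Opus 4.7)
The plan is a direct application of the transfer-matrix method, building on Lemma \ref{lemma:transfermatrix}. First I would invoke that lemma to translate the problem into the language of walks: $c_m(n)$ equals the number of walks of length $n$ in $G_m$ that start at the vertex $(0,0)$.

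Next I would apply the transfer-matrix identity \cite[Theorem 4.7.1]{Sta11} recalled at the beginning of the section, which states that $(A_m^n)_{u,v}$ equals the number of walks of length $n$ from $u$ to $v$ in $G_m$. Summing over all possible endpoints $v$ yields the total number of length-$n$ walks starting at $(0,0)$, which is precisely the sum of the entries in the row of $A_m^n$ indexed by $(0,0)$. This proves the first assertion of the proposition.

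Finally, I would repackage this row sum as a matrix product. Fixing the ordering of the vertices of $G_m$ so that $(0,0)$ comes first (the convention already implicit in the adjacency matrix displayed in Figure \ref{fig:graph}), left-multiplication of $A_m^n$ by the standard basis row vector $[1\,0\,\cdots\,0]$ extracts the $(0,0)$-row, and right-multiplication by the all-ones column vector $[1\,1\,\cdots\,1]^T$ sums its entries. This delivers the stated closed form $[1\,0\,\cdots\,0]\,A_m^n\,[1\,1\,\cdots\,1]^T$.

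There is no substantive obstacle here: the entire argument is essentially a two-line corollary of Lemma \ref{lemma:transfermatrix} combined with the standard transfer-matrix theorem. The only bookkeeping point that deserves a brief mention is the convention that $(0,0)$ indexes the first row of $A_m$, which is the indexing already fixed in Figure \ref{fig:graph}.
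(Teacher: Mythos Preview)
Your proposal is correct and follows exactly the same route as the paper's proof: invoke Lemma~\ref{lemma:transfermatrix} to identify $c_m(n)$ with walks of length $n$ in $G_m$ starting at $(0,0)$, then sum the $(0,0)$-row of $A_m^n$ over all endpoints and rewrite that row sum as the matrix product $[1\,0\,\cdots\,0]\,A_m^n\,[1\,1\,\cdots\,1]^T$. The paper's proof is a terse two-sentence version of precisely this argument.
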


\begin{proof} The number $c_m(n)$ equals the number of paths of length $n$ in the transfer graph $G_m$ that start at $00$, so it is the sum of the entries in the first row of $A_m^n$, which is given by $[1 0 \cdots 0] A^n [1 1 \cdots 1]^T.$
\end{proof}

For any fixed height $m$, Proposition \ref{prop:c_m} may be used to give an explicit formula for the function $c_m(n)$ as $n$ varies. To do so, one computes the Jordan normal form $J_m$ of $A_m$, so that if $A_m = KJ_mK^{-1}$ then $A_m^n = KJ_m^nK^{-1}$, where the powers of $J_m$ are easily computed; see for example \cite[(9.1.4)]{Golub13}. 
We get an even nicer answer for the generating function of this sequence.

\begin{theorem}\label{thm:detformula}
Let $m$ be a fixed positive integer. The generating function for the number $c_m(n)$ of monotone paths of length $n$ in the strip of height $m$ is 
\[
\sum_{n \geq 0}  c_m(n)x^n =  \frac{det(I-x A_m \, ; \, 00)}{det(I-x A_m)}
\]
where $(I-xA_m \, ; \, 00)$ is the matrix obtained from $I-xA_m$ by replacing every entry in the first column $00$ with a $1$.
\end{theorem}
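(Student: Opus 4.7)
The plan is to derive this generating function formula as a direct consequence of Proposition \ref{prop:c_m} combined with Cramer's rule, all carried out in the formal power series ring $\mathbb{Q}[[x]]$.

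First, I would restate Proposition \ref{prop:c_m} in the compact form $c_m(n) = e_{00}^T A_m^n \mathbf{1}$, where $e_{00}$ is the standard basis vector with a $1$ in position $00$ and $\mathbf{1}$ denotes the all-ones column vector. Multiplying by $x^n$ and summing the resulting geometric series of matrices yields
\[
\sum_{n \geq 0} c_m(n)\, x^n \;=\; e_{00}^T \left( \sum_{n \geq 0} x^n A_m^n \right) \mathbf{1} \;=\; e_{00}^T (I - xA_m)^{-1} \mathbf{1}.
\]
This manipulation is justified because $\det(I - xA_m)$ has constant term equal to $1$, so $I - xA_m$ is invertible over $\mathbb{Q}[[x]]$.

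Next, I would introduce the vector $v(x) = (I - xA_m)^{-1} \mathbf{1}$, which is the unique solution to the linear system $(I - xA_m)\, v(x) = \mathbf{1}$. The display above identifies the target generating function as the single coordinate $v_{00}(x) = e_{00}^T v(x)$. Applying Cramer's rule to this linear system gives
\[
v_{00}(x) \;=\; \frac{\det(I - xA_m\,;\,00)}{\det(I - xA_m)},
\]
where the numerator is the determinant of the matrix formed by replacing the $00$-column of $I - xA_m$ with the right-hand side $\mathbf{1}$ --- exactly the operation described in the statement of the theorem. This yields the desired identity.

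There is no substantive obstacle to this argument; it is essentially a two-line computation once the transfer-matrix encoding of Proposition \ref{prop:c_m} is in place. The only subtle point is to keep track of the setting: the identity holds in $\mathbb{Q}[[x]]$ (equivalently in $\mathbb{Q}(x)$), and the inversion step is legitimate because $\det(I - xA_m)$ is a unit of the power series ring.
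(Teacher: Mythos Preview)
Your proof is correct and follows essentially the same route as the paper: both invert the geometric series to obtain $e_{00}^T(I-xA_m)^{-1}\mathbf{1}$ and then identify the relevant entry as a ratio of determinants. Your invocation of Cramer's rule is a slightly cleaner packaging of what the paper does via the adjugate formula and a cofactor expansion along the $00$-column, but the underlying computation is the same.
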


\begin{proof} For any $N \times N$ matrix $A$ we have 
\[
\sum_{n \geq 0} A^n_{ij} x^n = 
\left(\sum_{n \geq 0} (Ax)^n\right)_{ij} = 
\left((I-Ax)^{-1}\right)_{ij} = 
\frac{(-1)^{i+j} \det(I-xA \, : \, j,i)}{\det(I-xA)}
\]
where $(K \, : \, j,i)$ denotes the matrix $K$ with row $j$ and column $i$ removed. Therefore
\begin{eqnarray*}
\sum_{j=1}^N\sum_{n \geq 0} A^n_{ij} x^n &=& \sum_{j=1}^N \frac{(-1)^{j+i} \det(I-xA \, : \, j,i)}{\det(I-xA)}, \\
&=& \frac{det(I-x A \, ; \, j)}{\det(I-x A)},
\end{eqnarray*}
where $(K \, ; \, j)$ denotes the matrix $K$ with every entry in row $j$ replaced by a $1$; in the last step we use the expansion of $\det(I-x A)$ by cofactors along the $j$th row. Applying this formula to Proposition \ref{prop:c_m} gives the desired result.
\end{proof}
%

\begin{remark}
Using Theorem \ref{thm:detformula} for the transition matrix $A_2$ of Figure \ref{fig:graph}, we readily obtain the generating function for monotone paths in a strip of height $2$:
\begin{eqnarray*}
\sum_{n \geq 0} c_2(n)x^n &=& \frac{1-x+x^2+x^3-x^4+x^5+x^6}{1-3x+3x^2-x^3-2x^4+2x^5-x^6-x^7} \\
&=& \frac{1+x^2+x^3}{1-2x+x^2-x^3-x^4} \\
&=& 1+2x+4x^2+8x^3+15x^4+28x^5+53x^6+101x^7+\cdots
\end{eqnarray*}
This matches the computation of Williams in \cite[(3)]{Wil96} and Ardila, Bastidas, Ceballos, and Guo in \cite[Corollary 3.6]{Ard17}; Williams also computed the generating function for $c_3(n)$. Their methods require a careful analysis for each fixed height $m$, and the complexity of that analysis grows with $m$. The advantage of our method is that it works uniformly for any height $m$.
\end{remark}

\subsection{\textsf{Asymptotic analysis}}

Theorem \ref{thm:detformula} shows that for any fixed $n$, the generating function for $c_m(n)$ is a rational function in $x$. This implies that the asymptotic growth of $c_m(n)$ is controlled by the roots of the denominator of that rational function. Let us make that precise.

\begin{theorem} \label{th:rationalfunctions}\cite[Theorem 4.1.1]{Sta11}
Let $\alpha_1, \ldots, \alpha_d$ be a fixed sequence of complex numbers, $d \geq 1$ and $\alpha_d \neq 0$. Let $Q(x) = 1+\alpha_1x + \cdots + \alpha_d x^d = \prod_{i=1}^k (1-\lambda_i x)^{d_i}$ where the $\lambda_i$s are distinct and $d_1+\cdots+d_k=d$. The following conditions on a function $f: \mathbb{N} \rightarrow \mathbb{C}$ are equivalent:
\begin{enumerate}
\item There is a polynomial $P(x)$ of degree less than $d$ and polynomials $p_i(x)$ with $\deg p_i(x) < d_i$ for $i=1, \ldots, k$ such that
\[
\sum_{n \geq 0} f(n)x^n = \frac{P(x)}{Q(x)} = \sum_{i=1}^k \frac{p_i(x)}{(1-\lambda_i x)^{d_i}}.
\]
\item
For all $n \geq 0$,
\[
f(n+d) + \alpha_1 f(n+d-1) + \alpha_2 f(n+d-2) + \cdots + \alpha_d f(n)=0.
\]
\item
There exist polynomials $P_i$ with $\deg P_i(x) < d_i$ for $i=1, \ldots, k$ such that, for all $n \geq 0$,
\[
f(n) = \sum_{i=1}^k P_i(n) \lambda_i^n.
\]
\end{enumerate}
\end{theorem}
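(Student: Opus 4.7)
The plan is to establish the two equivalences $(1) \Leftrightarrow (2)$ and $(1) \Leftrightarrow (3)$ separately. The first is coefficient-comparison after multiplying by $Q(x)$; the second reduces to partial fraction decomposition together with a dimension count. Both are standard manipulations once one sets up the right bijections between the relevant vector spaces.

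For $(1) \Leftrightarrow (2)$, set $F(x) := \sum_{n \geq 0} f(n) x^n$. The coefficient of $x^{n+d}$ in the product $Q(x) F(x)$ is precisely $f(n+d) + \alpha_1 f(n+d-1) + \cdots + \alpha_d f(n)$. Condition $(1)$ asserts $Q(x) F(x) = P(x)$ with $\deg P < d$, which forces all these coefficients to vanish for $n \geq 0$; this is exactly $(2)$. Conversely, if $(2)$ holds, define $P(x) := Q(x) F(x)$; the recurrence forces every coefficient of degree $\geq d$ in $P(x)$ to vanish, so $\deg P < d$ and $(1)$ follows.

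For $(1) \Leftrightarrow (3)$, the key identity is
\[
\frac{1}{(1 - \lambda x)^j} = \sum_{n \geq 0} \binom{n + j - 1}{j-1} \lambda^n x^n,
\]
in which $\binom{n+j-1}{j-1}$ is a polynomial in $n$ of degree $j-1$. For $(1) \Rightarrow (3)$, I would expand each summand $p_i(x)/(1-\lambda_i x)^{d_i}$: writing $p_i(x) = \sum_{a=0}^{d_i - 1} c_{i,a} x^a$ and using the identity above, the coefficient of $x^n$ takes the form $P_i(n) \lambda_i^n$ for some polynomial $P_i$ of degree less than $d_i$. Summing over $i$ yields $(3)$. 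The reverse implication is a dimension count: the linear map $(p_1, \ldots, p_k) \mapsto (P_1, \ldots, P_k)$ just described goes between two spaces of the same dimension $d_1 + \cdots + d_k = d$, so once it is shown to be injective, it is automatically surjective, and every sequence of the form in $(3)$ arises as the coefficient sequence of a function of the form in $(1)$.

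The main technical point, and the only non-routine step, is injectivity of the map above — equivalently, uniqueness of the partial fraction decomposition. I would handle this by clearing denominators: if $\sum_i p_i(x)/(1-\lambda_i x)^{d_i} = 0$ with each $\deg p_i < d_i$, then $\sum_i p_i(x) \prod_{j \neq i} (1-\lambda_j x)^{d_j} = 0$. Reducing modulo $(1 - \lambda_i x)^{d_i}$ and using that the factors $(1 - \lambda_j x)^{d_j}$ are pairwise coprime (as the $\lambda_j$ are distinct), we obtain $p_i \equiv 0 \pmod{(1 - \lambda_i x)^{d_i}}$, and the degree bound then forces $p_i = 0$.
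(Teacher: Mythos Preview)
The paper does not prove this theorem; it is quoted verbatim from Stanley's \emph{Enumerative Combinatorics} as background, with no argument supplied. Your proof is correct and is essentially the standard one found in that reference: coefficient extraction for $(1)\Leftrightarrow(2)$, and partial fractions together with the expansion of $(1-\lambda x)^{-j}$ for $(1)\Leftrightarrow(3)$, with the dimension count closing the loop.

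One small point worth making explicit in your $(1)\Rightarrow(3)$ step: when you expand $x^a/(1-\lambda_i x)^{d_i}$ and write its $n$th coefficient as $\binom{n-a+d_i-1}{d_i-1}\lambda_i^{n-a}$, you need this polynomial expression to agree with the true coefficient (namely $0$) for $0\le n<a$. This holds precisely because $a<d_i$, so that $n-a+d_i-1\in\{0,\ldots,d_i-2\}$ lands on a root of the polynomial $\binom{x}{d_i-1}$. Without the bound $a<d_i$ this would fail, so the hypothesis $\deg p_i<d_i$ is genuinely used here.
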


If one is interested in the asymptotic growth of $f(n)$, one needs to pay attention to the dominant terms of the expression in Theorem \ref{th:rationalfunctions}.3. This can be a subtle matter, because our polynomial $Q(x)$ 
can have complex roots $1/\lambda_i$, which can occur with multiplicites. 
If we are in the fortunate situation where a simple real root dominates the others, the situation is simpler, as follows. 

Say two sequences $(f(n))_{n \geq 0}$ and $(g(n))_{n \geq 0}$ are \emph{asymptotically equivalent}, and  write
\[
f(n) \sim g(n) \qquad \text{ if } \qquad \lim_{n \rightarrow \infty} \frac{f(n)}{g(n)} = 1
\]


\begin{lemma} \label{lem:asymp}
Assume that the conditions of Theorem \ref{th:rationalfunctions} hold, and that $\lambda=\lambda_1 \in \mathbb{R}_{>0}$ has the property that $d_1=1$ and $|\lambda_i| < \lambda$ for $i=2, \ldots, k$. Then
 \[
f(n) \sim  a \cdot \lambda^n \qquad \text{ where } \qquad a = \frac{-\lambda P(1/\lambda)}{Q'(1/\lambda)}.
\]
\end{lemma}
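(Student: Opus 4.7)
The plan is to use the explicit formula for $f(n)$ given in part 3 of Theorem \ref{th:rationalfunctions} and isolate the dominant term. Since $d_1 = 1$, the polynomial $P_1(x)$ has degree less than $1$, so it is a constant $a$, and we can write
\[
f(n) = a \lambda^n + \sum_{i=2}^k P_i(n) \lambda_i^n.
\]
Each term in the sum satisfies $|P_i(n) \lambda_i^n| \le C_i\, n^{d_i - 1} |\lambda_i|^n$ for some constant $C_i$ and large $n$. Dividing by $\lambda^n$ and using $|\lambda_i|/\lambda < 1$ shows that each ratio tends to $0$, so $f(n)/\lambda^n \to a$, which is exactly $f(n) \sim a \lambda^n$ (assuming $a \neq 0$, which we read off implicitly from $P(1/\lambda) \neq 0$).

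The next step is to compute $a$ via a standard residue-type calculation in the partial-fraction decomposition of $P(x)/Q(x)$. In the expansion $P(x)/Q(x) = \sum_i p_i(x)/(1-\lambda_i x)^{d_i}$, the $i=1$ term is $p_1/(1-\lambda x)$, and since $p_1/(1-\lambda x) = \sum_{n\geq 0} p_1 \lambda^n x^n$, we must have $p_1 = a$. To extract $p_1$, write $Q(x) = (1 - \lambda x)\, Q_1(x)$ using the simplicity of $1/\lambda$ as a root of $Q$, and compute
\[
a = p_1 = \lim_{x \to 1/\lambda} (1 - \lambda x)\, \frac{P(x)}{Q(x)} = \frac{P(1/\lambda)}{Q_1(1/\lambda)}.
\]
Finally, differentiating the factorization $Q(x) = (1-\lambda x) Q_1(x)$ at $x = 1/\lambda$ yields $Q'(1/\lambda) = -\lambda\, Q_1(1/\lambda)$, which rearranges to $Q_1(1/\lambda) = -Q'(1/\lambda)/\lambda$, giving
\[
a = \frac{-\lambda\, P(1/\lambda)}{Q'(1/\lambda)}
\]
as claimed.

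There is no real obstacle here: the argument is a textbook application of partial fractions combined with the fact that polynomial growth is dominated by any exponential with base of strictly larger modulus. The one place where the hypotheses are genuinely used is the residue computation, which requires $d_1 = 1$ (simplicity of the dominating pole) so that $p_1$ is a constant rather than a polynomial of positive degree, and requires $|\lambda_i| < \lambda$ strictly (not just $\leq$) so that the non-dominant terms $P_i(n)(\lambda_i/\lambda)^n$ actually decay to zero.
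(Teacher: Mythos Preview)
Your proof is correct and follows essentially the same route as the paper: isolate the dominant term from the partial-fraction/explicit form, then extract the constant $a$ as $\lim_{x\to 1/\lambda}(1-\lambda x)P(x)/Q(x)$. The only cosmetic difference is that the paper evaluates this limit via L'H\^opital's rule, whereas you factor $Q(x)=(1-\lambda x)Q_1(x)$ and differentiate; both computations yield $a=-\lambda P(1/\lambda)/Q'(1/\lambda)$.
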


\begin{proof}
Since $d_1=1$ the corresponding polynomial $p_1(x)=a$ is a constant, and the Taylor expansion of
\[
\sum_{n \geq 0} f(n)x^n = \frac{P(x)}{Q(x)} = \frac{a}{1-\lambda x} +  \sum_{i=2}^k \frac{p_i(x)}{(1-\lambda_i x)^{d_i}}
\]
gives $f(n) \sim a \cdot \lambda^n$. To identify the constant $a$, notice that
\begin{eqnarray*}
a &=& \lim_{x \rightarrow 1/\lambda} \, \frac{P(x)}{Q(x)} (1-x\lambda) \\
&=& \lim_{x \rightarrow 1/\lambda} \, \frac{-\lambda P(x) + P'(x)(1-x\lambda)}{Q'(x)} \\
&=& \frac{-\lambda P(1/\lambda)  }{Q'(1/\lambda)} 
\end{eqnarray*}
by L'H\^opital's rule. \end{proof}

Now let us apply this framework to the monotone paths that interest us.
Let $\lambda_1, \ldots, \lambda_k$ be the eigenvalues of our transition matrix $A_m$ with respective multiplicities $d_1, \ldots, d_k$, so that 
\[
\det(I-A_mx) = \prod_{i=1}^k (1-\lambda_ix)^{d_i}.
\]
Theorem \ref{thm:detformula} tells us that Theorem \ref{th:rationalfunctions} applies to the sequence $c_m(n)$. We can use it to immediately read off a linear recurrence relation for $c_m(n)$, as well as an explicit formula:
\begin{equation}\label{eq:c_m}
c_m(n) = \sum_{i=1}^k P_i(n) \lambda_i^n
\end{equation}
For a fixed height $m$, the polynomials $P_i$ --  and hence the exact formula for $c_m(n)$ -- can be computed explicitly: this is done by writing down the partial fraction decomposition of the right hand side of Theorem \ref{thm:detformula}, and then computing its Taylor series.

Additionally, we are in the fortunate situation where a single real eigenvalue of $A_m$ dominates the others and Lemma \ref{lem:asymp} applies, as we now explain.

\begin{definition}
A square matrix $A$ is \emph{primitive} if it is entrywise non-negative and some positive power $A^k$ is entrywise positive. 
\end{definition}

\begin{theorem}[Perron-Frobenius]
Every primitive matrix $A$ has a \emph{Perron eigenvalue} $r$: this is a positive eigenvalue $r > 0$ such that $r > |\lambda|$ for any other eigenvalue $\lambda$ of $A$. Furthermore, $r$ is a simple eigenvalue of $A$ -- that is, it has multiplicity 1 -- and it is between the minimum and the maximum column sums of $A$.
\end{theorem}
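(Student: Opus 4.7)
The plan is to prove the Perron--Frobenius theorem for primitive matrices in four main steps: existence of a strictly positive eigenvector with positive eigenvalue $r$; simplicity of $r$; strict dominance $r > |\lambda|$ for every other eigenvalue $\lambda$; and the column-sum bound. The workhorse will be the Collatz--Wielandt functional
\[
f(x) = \min_{i \,:\, x_i > 0} \frac{(Ax)_i}{x_i}
\]
defined for $x \geq 0$, $x \neq 0$, together with its supremum $r = \sup_{x \in \Delta} f(x)$ on the simplex $\Delta = \{x \geq 0 : \sum_i x_i = 1\}$.

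First I would show this supremum is attained by some $u \in \Delta$ with $Au = ru$ and $u > 0$. The functional $f$ is not continuous on all of $\Delta$, but replacing $\Delta$ with its image $A^k \Delta$ (where $k$ is chosen so that $A^k > 0$ by primitivity) puts us in the interior of the positive cone, where $f$ is continuous, and a compactness argument supplies a maximizer $u$ satisfying $Au \geq ru$. If the inequality were strict in some coordinate, multiplying by $A^k$ would give $A^{k+1}u > r\,A^k u$, so $f(A^k u) > r$, contradicting maximality. Hence $Au = ru$, and then $u = r^{-k}A^k u > 0$ because $A^k$ is strictly positive.

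For simplicity, any eigenvector $v$ of $r$ linearly independent of $u$ would admit a real scalar $t_0$ with $u + t_0 v \geq 0$ and at least one zero entry, contradicting the strict positivity established above. For strict dominance, suppose $Av = \lambda v$ with $|\lambda| = r$. Taking absolute values componentwise gives $r|v| = |Av| \leq A|v|$, so $f(|v|) \geq r$; maximality then forces $A|v| = r|v|$, and simplicity gives $|v| = c\,u$ for some $c > 0$. The triangle inequality $|(A^k v)_i| \leq (A^k |v|)_i$ must be an equality coordinate by coordinate, and since $A^k$ has all entries strictly positive, every coordinate of $v$ shares a common phase; hence $v$ is a complex scalar multiple of $u$ and $\lambda = r$.

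Finally, the column-sum bound follows by applying Collatz--Wielandt to $A^T$, which is primitive with the same eigenvalues as $A$. Evaluating $f$ for $A^T$ at the all-ones vector $\mathbf{1}$ gives $r \geq \min_j \sum_i A_{ij}$, and the dual min-max expression $r = \inf_{x > 0} \max_i (A^T x)_i / x_i$ evaluated at $\mathbf{1}$ gives $r \leq \max_j \sum_i A_{ij}$. The main obstacle is the strict-dominance step: the phase-alignment argument genuinely requires primitivity, since for merely irreducible matrices such as a cyclic permutation, nontrivial eigenvalues on the spectral circle do occur. It is precisely the hypothesis that some $A^k$ is strictly positive that eliminates this phase ambiguity.
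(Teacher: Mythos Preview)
The paper does not prove this theorem: it is stated as the classical Perron--Frobenius theorem and used as a black box (see the discussion preceding Lemma~2.10). So there is no ``paper's own proof'' to compare against.

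Your outline is the standard Collatz--Wielandt argument and is essentially correct, but one step deserves a caveat. Your simplicity argument shows only that the \emph{geometric} multiplicity of $r$ is $1$: you rule out a second linearly independent eigenvector. The statement (and the paper's later use of it, e.g.\ in deducing $c_m(n) \sim q_m r_m^n$ via a simple pole of the generating function) requires \emph{algebraic} simplicity, i.e.\ that $r$ is a simple root of the characteristic polynomial. To close this gap you need to exclude a generalized eigenvector: if $(A - rI)w = u$ for some $w$, derive a contradiction, for instance by choosing $c$ large so that $cu + w > 0$ and comparing $A(cu+w) = r(cu+w) + u$ against the min--max characterization of $r$, or by invoking the existence of a strictly positive left eigenvector $\ell^T$ with $\ell^T A = r\ell^T$ and computing $\ell^T u = \ell^T(A-rI)w = 0$, which contradicts $\ell, u > 0$. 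Everything else in your sketch---the compactness argument on $A^k\Delta$, the phase-alignment step for strict dominance, and the column-sum bounds via $A^T$---is sound.
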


\begin{lemma} \label{lem:Perron}
The matrix $A_m$ is primitive and its Perron eigenvalue $r_m=\lambda(A_m)$ satisfies $1 \le r_m \le 3$.
\end{lemma}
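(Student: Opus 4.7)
The lemma has two assertions: that $A_m$ is primitive, and that $1 \le r_m \le 3$.

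\textbf{Primitivity.} Since $A_m$ is already a $0$--$1$ matrix (hence nonnegative), it suffices to show that the directed graph $G_m$ is strongly connected and aperiodic; these together guarantee that $A_m^k$ is entrywise positive for some $k$. Aperiodicity is immediate: every ``diagonal'' vertex $(i,i)$ carries a self-loop, since the forbidden condition $j = i \pm 1$ and $k = i$ cannot hold when the source vertex has $i = j$. For strong connectivity I will build explicit routes. The key observation is that the edge $(i,j) \to (j,j)$ is always allowed, because the forbidden condition forces $i = k = j$, contradicting $j = i \pm 1$. Thus from any vertex $(i,j)$, one can first step to $(j,j)$, descend to $(0,0)$ by the zigzag
\[
(j,j) \to (j,j-1) \to (j-1,j-1) \to (j-1,j-2) \to \cdots \to (1,0) \to (0,0),
\]
then ascend symmetrically via
\[
(0,0) \to (0,1) \to (1,1) \to (1,2) \to (2,2) \to \cdots \to (l,l),
\]
and finally step to any target $(l,k)$ with $k \in \{l-1,l,l+1\}$ (again unrestricted from a diagonal vertex). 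At each step one verifies that the forbidden condition fails; this is a short case-check that exploits the fact that each ``up'' vertex $(j-1,j)$ forbids only the successor $(j,j-1)$ and each ``down'' vertex $(j+1,j)$ forbids only $(j,j+1)$, while the zigzag avoids exactly these choices.

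\textbf{Bounds on the Perron eigenvalue.} By the Perron--Frobenius theorem as stated above, $r_m$ lies between the minimum and maximum column sums of $A_m$, equivalently between the minimum and maximum in-degrees of $G_m$. For a vertex $(j,k)$ with $|j-k| \le 1$, the possible predecessors are $(i,j)$ with $i \in \{j-1,j,j+1\} \cap \{0,\ldots,m\}$, and the edge $(i,j) \to (j,k)$ is forbidden exactly when $i=k$ and $j = i \pm 1$. A brief case analysis then yields: interior diagonal vertices $(j,j)$ with $1 \le j \le m-1$ have in-degree $3$; the extreme vertices $(0,1)$ and $(m,m-1)$ have in-degree $1$; and every other vertex has in-degree $2$. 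For $m \ge 2$ both $1$ and $3$ are attained, so $1 \le r_m \le 3$. The remaining cases $m = 0, 1$ are checked directly from Lemma \ref{lem:rec}: $a_0(x) = 1 - x$ gives $r_0 = 1$, and $a_1(x) = x^4 - 2x^3 + x^2 - 1$ gives $r_1 = (1+\sqrt{5})/2$, both in $[1,3]$.

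\textbf{Main obstacle.} There is no serious difficulty here; the proof is essentially bookkeeping. The mildly delicate point is keeping track of the forbidden-edge rule during the strong-connectivity argument, where one must observe that the two chosen zigzag patterns precisely avoid the forbidden successor at every ``up'' or ``down'' vertex they pass through.
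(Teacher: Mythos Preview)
Your proof is correct and follows essentially the same approach as the paper: both use the self-loops at the diagonal vertices $(i,i)$ together with connectivity of $G_m$ to establish primitivity, and both use the Perron--Frobenius degree bounds for $1 \le r_m \le 3$. The only cosmetic difference is that the paper packages the primitivity argument by padding any walk with self-loops to length exactly $2m+1$, thereby showing $A_m^{2m+1}$ is entrywise positive directly, whereas you invoke the equivalent ``strongly connected $+$ aperiodic'' criterion; your degree analysis is also more detailed than strictly necessary (one only needs that every in-degree lies in $\{1,2,3\}$, not that the extremes are attained).
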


\begin{proof}
Our matrix $A_m$ is clearly non-negative. 
By inspecting the graph $G_m$, we see that for any vertices $u$ and $v$ there is a walk from $u$ to $v$ of length at most $2m+1$. This walk must use at least one vertex $(i,i)$, and adding loops $(i,i) \rightarrow (i,i)$ to the walk, one can extend it to have length exactly $2m+1$. Therefore $A_m^{2m+1}$ is positive. The first claim follows, and the second one follows readily from the Perron-Frobenius theorem and the observation that the indegree and outdegree of any vertex of $G_m$ is at least $1$ and at most $3$.
\end{proof}

\begin{corollary}\label{cor:c_mPerron}
The Perron eigenvalue If $r_m:= \lambda(A_m)$ of the matrix $A_m$, that is, the largest positive root of the polynomial $a_m(x)$ of Proposition \ref{prop:charpoly},  is the exponential growth rate constant for monotone paths in a strip of height $m$; more precisely,
\[
c_m(n) \sim q_m \cdot r_m^n
\]
for a constant $q_m$.
\end{corollary}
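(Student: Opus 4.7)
My plan is to combine the generating function machinery (Theorem \ref{thm:detformula}) with the Perron--Frobenius structure established in Lemma \ref{lem:Perron}, and then invoke Lemma \ref{lem:asymp} to extract the asymptotic. First I would write $\sum_{n \geq 0} c_m(n) x^n = P(x)/Q(x)$ with $P(x) = \det(I-xA_m \, ; \, 00)$ and $Q(x) = \det(I - xA_m)$, and observe that $Q(x) = (-x)^{3m+1} a_m(1/x)$, so the reciprocal roots $1/\lambda_i$ of $Q$ correspond exactly to the roots $\lambda_i$ of $a_m(x)$, which are the eigenvalues of $A_m$ (with the same multiplicities). Thus $Q$ factors as $\prod_i (1 - \lambda_i x)^{d_i}$, putting us in the setting of Theorem \ref{th:rationalfunctions}.

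Next I would apply Lemma \ref{lem:Perron}: since $A_m$ is primitive, the Perron--Frobenius theorem gives a simple positive eigenvalue $r_m$ strictly greater in absolute value than all other eigenvalues. This is precisely the hypothesis of Lemma \ref{lem:asymp} (with $\lambda_1 = r_m$, $d_1 = 1$), so we obtain
\[
c_m(n) \sim q_m \cdot r_m^n, \qquad q_m = \frac{-r_m \, P(1/r_m)}{Q'(1/r_m)}.
\]
To identify $r_m$ as the largest positive root of $a_m(x)$, simply note that $a_m(x) = \det(A_m - xI)$ is the characteristic polynomial of $A_m$ up to sign, so its roots are exactly the eigenvalues of $A_m$; the largest positive one is the Perron eigenvalue $r_m$ by definition.

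The one step requiring a little care is verifying that $q_m \neq 0$, since otherwise the asymptotic equivalence $\sim$ would be vacuous. The cleanest way is to bypass the generating function constant and appeal instead to the standard convergence statement for primitive matrices: $r_m^{-n} A_m^n \to v w^T$, where $v$ and $w$ are the right and left Perron eigenvectors, both with strictly positive entries (normalized so that $w^T v = 1$). Combined with Proposition \ref{prop:c_m}, which expresses $c_m(n) = e_{00}^T A_m^n \mathbf{1}$, this yields
\[
\frac{c_m(n)}{r_m^n} \longrightarrow v_{00} \cdot (w^T \mathbf{1}) > 0,
\]
so $q_m = v_{00} (w^T \mathbf{1}) > 0$ and the asymptotic equivalence holds. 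This positivity is the only real substance beyond bookkeeping; the rest is assembly of results already in hand.
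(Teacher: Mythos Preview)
Your proof is correct and follows the same route as the paper, which simply says the result ``follows readily from Lemma \ref{lem:asymp} and Lemma \ref{lem:Perron}.'' You have unpacked this in the natural way: Theorem \ref{thm:detformula} puts $\sum c_m(n)x^n$ in the form $P(x)/Q(x)$ with $Q(x)=\det(I-xA_m)=\prod_i(1-\lambda_i x)^{d_i}$, Lemma \ref{lem:Perron} supplies a simple dominant eigenvalue $r_m$, and Lemma \ref{lem:asymp} delivers the asymptotic.

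The one place you go beyond the paper is in checking that $q_m\neq 0$. The paper does not address this, but you are right that without it the statement $c_m(n)\sim q_m r_m^n$ is vacuous; in the language of Lemma \ref{lem:asymp}, one must rule out the possibility that $P(1/r_m)=0$, i.e., that the factor $(1-r_m x)$ cancels between numerator and denominator. Your argument via the standard Perron--Frobenius limit $r_m^{-n}A_m^n\to vw^T$ with strictly positive $v,w$, combined with $c_m(n)=e_{00}^T A_m^n\mathbf{1}$ from Proposition \ref{prop:c_m}, gives $q_m=v_{00}(w^T\mathbf{1})>0$ cleanly and is a genuine improvement in rigor over the paper's treatment.
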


\begin{proof}
This follows readily from Lemma \ref{lem:asymp} and Lemma \ref{lem:Perron}.
\end{proof}

\begin{table}
\centering
\resizebox{\columnwidth}{!}{%
\begin{tabular}{|c|c|c|}
\hline
$m$ & $a_m(x) = \det(A_m - xI)$ & $r_m$\\
\hline
$0$ & $- x+ 1$ & $1$\\
$1$ & $x^4-2x^3+x^2-1$ & $ 1.6180\ldots $\\
$2$ & $-x^7 + 3 x^6 - 3 x^5 + x^4 + 2 x^3 - 2 x^2 + x + 1$ & $ 1.8971\ldots$\\
$3$ & $x^{10} - 4 x^9 + 6 x^8 - 4 x^7 - 2 x^6 + 6 x^5 - 5 x^4 + 2 x^2 - 2 x - 1$
 & $2.0507\ldots$\\
$4$ & $-x^{13} + 5 x^{12} - 10 x^{11} + 10 x^{10} - x^9 - 11 x^8 + 15 x^7 - 7 x^6 - 4 x^5 + 8 x^4 - 3 x^3 - x^2 + 3 x + 1$ & $2.1444\ldots$\\
$\downarrow$ & & $\downarrow$\\
$\infty$ & & $1 + \sqrt 2$ \\
\hline
\end{tabular}%
}
\caption{Characteristic polynomial $a_m(x)$ and Perron eigenvalue $r_m$ of the transition matrix $A_m$. The number $r_m$ is the exponential growth constant of the number $c_m(n)$ of monotone paths of length $n$ in a tunnel of height $m$.}
\label{tab:largest_root}
\end{table}

Table \ref{tab:largest_root} shows the growth constants $r_m$ for monotone paths in height $m$; that is, the Perron eigenvalues of the transition matrices $A_m$, for the first few values of $m$. 
We note that our description of the growth constant $r_m$ as the largest real root of the polynomial $a_m(x)$ in Proposition \ref{prop:charpoly} is the most explicit possible, because Galois theory tells us that there is no exact formula for it. For example, $a_3(x)$ factors into two irreducible  quintics, and the quintic $x^5-2x^4+x^2-2x-1$ that has $r_3$ as a root has full Galois group $S_5$. 

We now offer an optimal upper bound for the growth constants $r_m$ as the height $m$ of the tunnel grows.

%

\begin{proposition}\label{prop:Perron}
The Perron eigenvalues $r_0, r_1, r_2, \ldots$ of the matrices $A_0, A_1, A_2, \ldots$ satisfy
\[
1=r_0 < r_1 < r_2 < \cdots, \mathrm{ \ and \ } \lim_{m \rightarrow \infty} r_m = 1+\sqrt{2}.
\]
\end{proposition}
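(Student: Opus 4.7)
The plan is to treat strict monotonicity, the upper bound $r_m<1+\sqrt{2}$, and the sharpness of that bound separately. Strict monotonicity follows because $A_m$ is a proper principal submatrix of $A_{m+1}$: the states of $G_m$ are a subset of those of $G_{m+1}$, and the three extra states $(m,m+1)$, $(m+1,m)$, $(m+1,m+1)$ are exactly the rows and columns deleted from $A_{m+1}$ to recover $A_m$. Since $A_{m+1}$ is irreducible by Lemma~\ref{lem:Perron}, the classical Perron--Frobenius fact that the spectral radius strictly decreases on a proper principal submatrix of an irreducible nonnegative matrix gives $r_m<r_{m+1}$.

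For the upper bound, I will exhibit an explicit strict sub-eigenvector of $A_m$ at $\lambda=1+\sqrt{2}$, namely
\[
v_{(i,i-1)}=v_{(i,i+1)}=1,\qquad v_{(i,i)}=\sqrt{2}.
\]
This guess is motivated by searching for a translation-invariant positive eigenvector of the infinite transfer operator $A_\infty$; the $z=1$ propagating mode of $A_\infty$ produces the cubic $\lambda^3-3\lambda^2+\lambda+1=0$ whose largest positive root is $1+\sqrt{2}$. The single identity $\sqrt{2}(1+\sqrt{2})=2+\sqrt{2}$ immediately gives $(A_m v)_{(i,j)}=\lambda\,v_{(i,j)}$ at every interior state (three types, one short case check each), while at each of the four boundary states $(0,0)$, $(1,0)$, $(m{-}1,m)$, $(m,m)$ exactly one outgoing edge is missing, so $(A_m v)_{(i,j)}<\lambda\,v_{(i,j)}$ strictly. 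Since $A_m$ is irreducible, the standard sub-eigenvector corollary of Perron--Frobenius forces $r_m<1+\sqrt{2}$.

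For $r_m\to 1+\sqrt{2}$ I will use Proposition~\ref{prop:charpoly} in the regime $x\in(1,1+\sqrt{2})$, where $\gamma(x)<0$ and $\beta_\pm(x)$ are complex conjugates of common modulus $x^2$. Writing $\beta_\pm(x)=x^2 e^{\pm i\theta(x)}$ and $\alpha_\pm(x)=\rho(x)e^{\pm i\phi(x)}$ gives
\[
a_m(x) \;=\; 2\rho(x)\,x^{2m}\,\cos\!\bigl(m\,\theta(x)+\phi(x)\bigr).
\]
The crux is that $\cos\theta(x)=-(x^3-x^2+x+1)/(2x^2)\to -1$ as $x\to(1+\sqrt{2})^-$, so $\theta(x)\to\pi$. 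For each large $m$, picking the largest integer $k$ with $(2k+1)\pi/2<m\pi+\phi(1+\sqrt{2})$ and applying the intermediate value theorem to $x\mapsto m\theta(x)+\phi(x)$ on a small left-neighborhood of $1+\sqrt{2}$ produces a root $x_m$ of $a_m$ with $\theta(x_m)=\pi-O(1/m)$; continuity of $\theta$ then forces $x_m\to 1+\sqrt{2}$. By Corollary~\ref{cor:c_mPerron}, $r_m$ is the largest positive real root of $a_m(x)$, so combining with the upper bound yields $x_m\le r_m<1+\sqrt{2}$, hence $r_m\to 1+\sqrt{2}$.

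I expect the hardest part to be the sharpness step. Because $\theta(x)$ is not monotone on $(1,1+\sqrt{2})$ — it equals $\pi$ at both endpoints — the intermediate-value argument must be localized to the correct, increasing branch of $\theta$ near $x=1+\sqrt{2}$, which follows from $\bigl((x^3-x^2+x+1)/(2x^2)\bigr)'>0$ there. The upper-bound step is easier, but identifying the right sub-eigenvector is itself non-routine, requiring the dispersion analysis of the infinite transfer operator described above.
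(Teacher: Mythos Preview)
Your proof is correct and takes a genuinely different route from the paper's on all three points. For strict monotonicity, the paper argues combinatorially that $c_m(n)\le c_{m+1}(n)$ rules out $r_{m+1}<r_m$, and then uses the recurrence of Lemma~\ref{lem:rec} to exclude $r_{m+1}=r_m$ (a common root would propagate down to $a_0,a_1$); your principal-submatrix argument via Perron--Frobenius is more direct and bypasses both steps. Your explicit sub-eigenvector at $1+\sqrt{2}$ has no analogue in the paper at all --- the paper only uses the crude bound $r_m\le 3$ from column sums to guarantee convergence of the $r_m$ --- and it gives a clean conceptual explanation of where $1+\sqrt{2}$ enters, via the translation-invariant mode of the infinite transfer operator. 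For the limit, however, the paper's argument is considerably slicker than yours: rather than producing roots via the trigonometric form $a_m(x)=2\rho(x)x^{2m}\cos(m\theta(x)+\phi(x))$ and the intermediate value theorem, the paper assumes $r:=\lim r_m\neq 1+\sqrt{2}$, observes that then $\gamma(r)\neq 0$ so $\alpha_\pm,\beta_\pm$ are continuous and nonzero at $r$, rewrites $a_m(r_m)=0$ as $(\beta_-/\beta_+)(r_m)^m=-(\alpha_+/\alpha_-)(r_m)$, takes $\tfrac{1}{m}\log$ of both sides, and lets $m\to\infty$ to force $\beta_-(r)=\beta_+(r)$, i.e.\ $\gamma(r)=0$, a contradiction. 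This entirely sidesteps the monotone-branch-of-$\theta$ and boundedness-of-$\phi$ analysis that you correctly identify as the most delicate part of your approach. In short: your first two steps are tighter than the paper's, while the paper's third step is tighter than yours.
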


\begin{proof}
If we had $r_{m+1} < r_m$ for some $m$,  Corollary \ref{cor:c_mPerron} would imply
\[
\lim_{n \to \infty} \frac{c_{m+1}(n)}{c_m(n)} = \lim_{n \to \infty} \frac{a_{m+1}}{a_{m}} \left(\frac{r_{m+1}}{r_{m}}\right)^n = 0,
\]
contradicting the fact that $c_m(n) \leq c_{m+1}(n)$. Also, if we had $r_{m+1}=r_m$, then this would be a common root of the polynomials $a_{m+1}(x)$ and $a_m(x)$, and the recurrence of Lemma \ref{lem:rec} would imply that it is also a root of $a_{m-1}(x), a_{m-2}(x), \ldots, a_0(x)$. However $a_0$ and $a_1$ don't have a root in common. This proves the first claim. 


For the second part, notice that the Perron eigenvalues are increasing and bounded above by $3$ by Lemma \ref{lem:Perron}, so the sequence does converge. Let the limit be
\[
\lim_{m \rightarrow \infty} r_m = r
\]
and assume, for the sake of contradiction, that $r \neq 1 + \sqrt 2$.

Since $r_m$ is an eigenvalue for $A_m$, it is a root of $a_m(x)$, so Proposition \ref{prop:charpoly} gives
\[
0 = a_m(r_m) =  \alpha_+(r_m) \beta_+(r_m)^m + \alpha_-(r_m) \beta_-(r_m)^m.
\]
Let us write $\alpha(x) = \alpha_+(x)/\alpha_-(x)$ and $\beta(x) = \beta_-(x)/\beta_+(x)$; these are well defined and non-zero for all but a finite number of values $x$. Thus for all sufficiently large $m$ we have
\[
-\alpha(r_m) = \beta(r_m)^m 
\]
and
\begin{equation}\label{eq:limit}
\frac1m \log(-\alpha(r_m)) =  \log \beta(r_m). 
\end{equation}

We now wish to take limits, but since the discriminant $\gamma(x) = (x^4-1)(x^2-2x-1)$ -- whose square root arises in $\alpha_\pm$ and $\beta_\pm$ -- can be negative, we need to regard these as complex functions. Making a branch cut along the ray spanned by $1+i$  gives rise to two branches of the square root function $\pm \sqrt{z}$, each of which is continuous in the domain
\[
D = \mathbb{C} \setminus (1+i) \, \mathbb{R}_{\geq 0}.
\]
Now $r_m \rightarrow r$ implies 
$\gamma(r_m) \rightarrow \gamma(r)$. This limit is real and non-zero, since $r>1$ and we assumed $r \neq 1+\sqrt{2}$. Thus $\sqrt z$ is continuous at $\gamma(r)$, 
 so $\sqrt{\gamma(r_m)} \rightarrow \sqrt{\gamma(r)}$. This implies that $\alpha(r_m) \rightarrow \alpha(r)$ and $\beta(r_m) \rightarrow \beta(r)$.

One may verify computationally that $\alpha_+$, $\alpha_-$, $\beta_+$, $\beta_-$ have no real roots. Furthermore, $\alpha_-$ and $\alpha_+$ only have two positive poles, located at 
 $x=1$ and $x = 1+\sqrt{2}$, the positive roots of the polynomial $\gamma(x)$. Thus we can choose a branch of the logarithm function that is continuous at $-\alpha(r)$ and $\beta(r)$.
  Since $\alpha(r) \neq 0$, the left hand side of \eqref{eq:limit} converges to $0$, while the right hand side converges to $\log \beta(r)$; this means that $\beta(r)=1$.
Thus $\beta_-(r) = \beta_+(r)$, which implies that $\gamma(r)=0$, a contradiction.
 
 We conclude that indeed $r = 1+\sqrt 2$ as desired.
\end{proof}


We note that these results are consistent with the observation, recorded by Janse van Rensburg, Prellberg, and Rechnitzer in \cite[Lemma 2.1]{JPR}, that the growth constant for the monotone paths in the first quadrant.
equals $1 + \sqrt{2}$. 
Remarkably, they showed that the growth constant 
still equals $1 + \sqrt{2}$ when considering monotone paths in the wedges bound by lines $y=0$ and $y=mx$, or bound by lines $y=-mx$ and $y=mx$, for any integer slope $m$.
It would be interesting to generalize our results to those settings.

\section{\textsf{Using CAT(0) cube complexes to find a small bottleneck}}\label{sec:CAT(0)}

For many Markov chains $M$, the transition kernel is the skeleton of a \emph{CAT(0) cube complex}. When that is the case, Ardila, Owen, and Sullivant \cite{AOS} showed how to associate a \emph{poset with inconsistent pairs (PIP)} $P_M$ to $M$.
The central idea, which bears repeating, that gave rise to this paper is the following:

\begin{idea} \label{idea}
When the transition kernel of a Markov chain $M$ is a CAT(0) cube complex, one can use the corresponding poset with inconsistent pairs (PIP) $P_M$ to find bottlenecks (vertex separators) in the kernel, and obtain upper bounds on the mixing time of $M$.
\end{idea}

In this section we make this statement precise, and in the next section we will use it to bound the mixing time of the Markov chain $M_{m,n}$.

\subsection{\textsf{The cube complex of monotone paths in a strip}}

There are numerous contexts where a discrete system moves according to local, reversible moves. Abrams, Ghrist, and Peterson introduced the formalism of \emph{reconfigurable systems} to model a very wide variety of such contexts. In particular, they showed how a reconfigurable system $X$ leads to a \emph{cube complex} $\S(X)$. Ardila, Bastidas, Ceballos, and Guo described the complex $S_{m,n}$ of  monotone paths in a strip, using the language of robotic arms in a tunnel. We now give their description, and refer the reader to \cite{Abr04, ABY, GhristPeterson} for the general framework.

\begin{definition}
Let $S_{m,n}$ be the \emph{transition kernel} of the Markov chain $M_{m,n}$. Its vertices correspond to the $c_m(n)$ monotone paths of length $n$ in a strip of height $m$. Two vertices are connected to each other if the corresponding paths can be obtained from one another by one of the following moves:

\noindent $\bullet$ 
switch corners: two consecutive steps that go in different directions exchange directions, 

\noindent $\bullet$ 
flip the end: the last step of the path rotates $90^\circ$.
\end{definition}

\begin{figure}
\begin{center}
\includegraphics[width=5.5in]{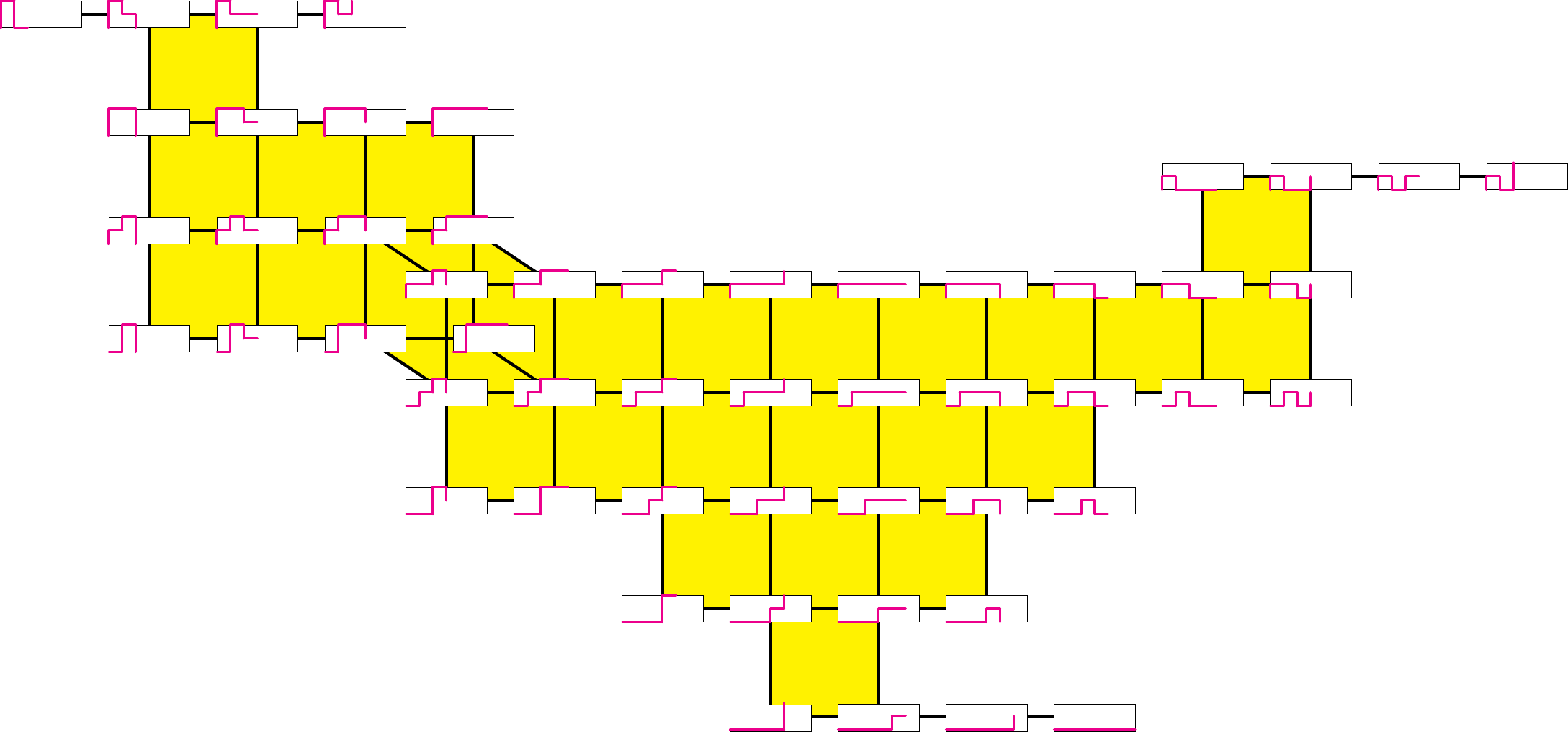}
\end{center}
\caption{The transition kernel $G_{2,6}$ of length $6$ monotone paths in a strip of height $2$.
\label{fig:transition}}
\end{figure}

These moves are illustrated in Figure \ref{fig:switch}. The transition kernel $G_{2,6}$ is shown in Figure \ref{fig:transition}. It feels natural, and is very useful, to ``fill in the cubes" in this graph; let us make this precise.

\begin{definition} 
Given a path $P$ and two moves available at $P$, say these two moves are \emph{compatible} if no step of the path is involved in both of them.
\end{definition}

Intuitively, two moves $M_1$ and $M_2$ on a path $P$ are compatible when they are ``physically independent" from each other in $P$. Performing $M_1$ and then $M_2$ gives the same result as performing $M_2$ and then $M_1$, so we can imagine that $M_1$ and $M_2$ can be performed simultaneously if desired.

For a path $P$ and $k$ moves of $P$ that are pairwise compatible, we can obtain $2^k$ different paths by performing any subset of these $k$ moves to $P$. These $2^k$ vertices form the graph of a $k$-dimensional cube in $\mathcal{S}_{m,n}$. 

\begin{definition}
Let $\mathcal{S}_{m,n}$ be the \emph{transition cube complex} of the Markov chain $M_{m,n}$. Its vertices correspond to the $c_m(n)$ monotone paths of length $n$ in a strip of height $m$. Its $k$-dimensional cubes correspond to the $k$-tuples of pairwise compatible moves.
\end{definition}

This cube complex is naturally a metric space, where each individual cube is a unit cube with the standard Euclidean metric.

\subsection{\textsf{CAT(0) cube complexes and posets with inconsistent pairs} }

The cube complex $\S(X)$ of a reconfigurable system is always locally non-positively curved \cite{Abr04, ABY, GhristPeterson} and sometimes globally non-positively curved, or \emph{CAT(0)}. The reader may consult the definitions in the references above. 
When a cube complex is  CAT(0),  Ardila, Owen, and Sullivant \cite{AOS} showed how to find geodesic between any two points under various metrics. This has consequences for robotic motion planning, among others \cite{ABCG, ArdilaNotices}. As we stated in Idea \ref{idea}, this also has implications for the mixing times of Markov chains.

\subsubsection{\textsf{CAT(0) cube complexes: how to define them}}

The CAT(0) property is the metric property of being non-positively curved, as witnessed by the fact that triangles are ``thinner" than in Euclidean space. Let us make this precise for completeness, although we will not use this definition in what follows.

\begin{definition}
Let $X$ be a metric space where there is a unique geodesic (shortest) path between any two points. Consider a triangle $T$ in $X$ of side lengths $a, b, c$, and build a comparison triangle $T_0$ with the same side lengths in Euclidean plane. Consider a chord of length $d$ in $T$ that connects two points on the boundary of $T$; there is a corresponding comparison chord in $T_0$, say of length $d_0$. If for every triangle $T$ in $X$ and every chord in $T$ we have $d \leq d_0$, we say that X is  CAT(0).  
\end{definition}

\subsubsection{\textsf{CAT(0) cube complexes: how to recognize them topologically} }

Testing whether a general metric space is CAT(0) is quite subtle. However, Gromov \cite{Gromov} proved that this is easier to do this if the space is a cubical complex. In a cubical complex, the link of any vertex is a simplicial complex. We say that a simplicial complex $\Delta$ is \emph{flag} if it has no empty simplices; that is, any $d+1$ vertices which are pairwise connected by edges of $\Delta$ form a $d$-simplex in $\Delta$.

\begin{theorem} (Gromov, \cite{Gromov}) 
A cubical complex is CAT(0) if and only if it is simply connected and the link of any vertex is a flag simplicial complex.
\end{theorem}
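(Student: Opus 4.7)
The proof of this classical theorem of Gromov rests on two ingredients: a local geometric characterization of CAT(0) at each vertex in terms of its link, and the Cartan-Hadamard local-to-global principle. The plan is to establish the local characterization, invoke Cartan-Hadamard as a known tool, and combine them to obtain both directions of the equivalence.

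For the local step, I would prove that a cube complex $X$ is locally CAT(0) at a vertex $v$ if and only if $\mathrm{Lk}(v)$ is a flag simplicial complex. Here $\mathrm{Lk}(v)$ inherits a natural all-right piecewise spherical metric in which every edge has length $\pi/2$, reflecting that the link of a corner of a Euclidean unit cube is an all-right spherical simplex. By Berestovskii's theorem on spherical joins, local CAT(0) at $v$ is equivalent to $\mathrm{Lk}(v)$ being CAT(1), so the task reduces to showing that an all-right spherical complex is CAT(1) if and only if it is flag. The ``only if'' direction is easy: a non-flag complex contains the boundary of an empty simplex, and traversing that boundary produces a closed local geodesic of length strictly less than $2\pi$, violating CAT(1). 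For the ``if'' direction I would argue by induction on dimension, using that a flag all-right spherical complex can be decomposed along maximal simplices as orthogonal spherical joins that admit explicit CAT(1) structures, and then verify the triangle comparison inequality for geodesic triangles that cross multiple chambers.

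For the global step I would invoke the Cartan-Hadamard theorem for CAT(0) spaces: a complete, simply connected, locally CAT(0) geodesic space is globally CAT(0). A finite-dimensional cube complex with Euclidean unit cubes is complete and geodesic, so the hypotheses are met whenever the space is simply connected and locally CAT(0).

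Combining the two ingredients yields both directions of Gromov's theorem. For sufficiency, flag links give local CAT(0) by the local step, and with simple connectedness and completeness, Cartan-Hadamard forces global CAT(0). For necessity, a CAT(0) space is uniquely geodesic, hence contractible via the geodesic retraction to any basepoint, hence simply connected; and global CAT(0) tautologically implies local CAT(0), so the local step forces each link to be flag. The main obstacle is the link characterization, and in particular the flag-implies-CAT(1) direction, where the combinatorics of the simplicial complex must be reconciled with the metric geometry of the spherical join structure. Everything else is either definitional, a routine consequence of unique geodesics, or a black-box appeal to Cartan-Hadamard.
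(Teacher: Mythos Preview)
The paper does not prove this statement at all: it is quoted as a classical theorem of Gromov with a citation to \cite{Gromov}, and the paper simply uses it as a black box to motivate the combinatorial description of CAT(0) cube complexes in Section~\ref{sec:CAT(0)-PIP}. There is therefore no ``paper's own proof'' to compare your proposal against.

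That said, your outline is the standard strategy and is broadly correct: reduce to the link condition via Berestovskii's characterization, show that an all-right spherical complex is CAT(1) if and only if it is flag, and then globalize via Cartan--Hadamard. One small caution: your description of the ``if'' direction (flag $\Rightarrow$ CAT(1)) as proceeding by ``decomposing along maximal simplices as orthogonal spherical joins'' is not quite how the standard argument runs. A flag all-right spherical complex is not in general a spherical join of its maximal simplices; rather, the usual inductive argument (as in Bridson--Haefliger or Moussong's thesis) shows that links in a flag all-right complex are again flag all-right complexes of lower dimension, and then uses a systole bound to rule out short closed geodesics. If you intend to write out the proof in full, that is the step requiring the most care.
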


If one has a reasonably small cubical complex, one can easily use this criterion to determine whether it is  CAT(0).   Roughly speaking, the first  property says that the space should be connected and have no holes. The second one says that if we stand at a vertex and see that our complex contains all the $2$-faces of a $d$-cube that contain $v$, then in fact it also contains that $d$-cube. For example we see, by inspection, that the cube complex of monotone paths $\S_{2,6}$ in Figure \ref{fig:transition} is  CAT(0).  More generally $\S_{m,n}$ is always CAT(0); see Theorem \ref{thm:ABCG}.

\subsubsection{\textsf{CAT(0) cube complexes: how to describe and build them combinatorially}  \label{sec:CAT(0)-PIP}}

Most relevantly to us, Ardila, Owen, and Sullivant \cite{AOS} gave a combinatorial criterion to determine whether a cube complex is CAT(0). They showed that rooted CAT(0) cube complexes are in bijection with \emph{posets with inconsistent pairs} (PIPs). Thus, if we wish to prove that a cube complex is CAT(0), it is sufficient to choose a root for it, and identify the corresponding PIP. Let us describe this carefully now.

\begin{definition}
A \emph{poset with inconsistent pairs (PIP)} is a finite poset $P$, together with a collection of \emph{inconsistent pairs} $\{p,q\}$ -- denoted $p \nleftrightarrow q$ --  such that:
\begin{enumerate}
\item
If $p$ and $q$ are inconsistent, then there is no $r$ such that $r \geq p$ and $r \geq q$.
\item
If $p$ and $q$ are inconsistent and $p' \geq p$ and $q' \geq q$, then $p'$ and $q'$ are inconsistent.
\end{enumerate}
\end{definition}

The \emph{Hasse diagram} of a PIP is obtained by drawing the poset, and connecting each minimal inconsistent pair with a dotted line. An inconsistent pair $p \nleftrightarrow q$ is \emph{minimal} if there is no other inconsistent pair $p' \nleftrightarrow q'$ with $p' \leq p$ and $q' \leq q$. The left panel of Figure \ref{fig:cubical} shows a PIP.

Recall that $I \subseteq P$ is an \emph{order ideal} or \emph{downset} of poset $P$  if $a \leq b$ and $b \in I$ imply $a \in I$. 
A \emph{consistent downset} 
is one which contains no inconsistent pairs. 

\begin{definition}\label{def:cubePIP}
Let $P$ be a poset with inconsistent pairs. The \emph{rooted cube complex of $P$}, denoted $\S(P)$, is defined as follows:

$\bullet$ vertices:
The vertices of $\S(P)$ are identified with the consistent order ideals of $P$. 

$\bullet$ edges:
There is an edge joining two vertices if the corresponding order ideals differ by a single element. 

$\bullet$ cubes:
More generally, 
there is a cube $C(I,L)$ for each pair $(I, L)$ of a consistent order ideal $I$ and a subset $L \subseteq I_{max}$, where $I_{max}$ is the set of maximal elements of $I$. This cube has dimension $|L|$, and its vertices are obtained by removing from $I$ the $2^{|L|}$ possible subsets of $L$. 

The cubes are naturally glued along their faces according to their labels. The root of $\S(P)$ is the vertex corresponding to the empty order ideal.
\end{definition}

We denote the corresponding graph $S(P)$. 
The right panel of Figure \ref{fig:cubical} shows the rooted cube complex $\S(P)$ (rooted at $\emptyset$) corresponding to the PIP on the left panel.

\begin{theorem}[Ardila, Owen, Sullivant]\label{th:poset} \cite{AOS} 
The map $P \mapsto \S(P)$ is a bijection between finite posets with inconsistent pairs and finite rooted CAT(0) cube complexes.
\end{theorem}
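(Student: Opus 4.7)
Since the statement claims a bijection, the argument naturally splits into three pieces: (i) verify that $\S(P)$ is a rooted CAT(0) cube complex for every PIP $P$, (ii) construct an inverse assignment $X \mapsto P_X$ sending a rooted CAT(0) cube complex back to a PIP, and (iii) check that the two constructions are mutually inverse.

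For (i), I would first verify that $\S(P)$ is a well-defined cube complex. Each prospective cube $C(I, L)$ has vertices $I \setminus L'$ for $L' \subseteq L$; these are consistent order ideals because removing maximal elements preserves the downset condition, and deleting elements cannot create inconsistent pairs. The face identifications match by construction. To show CAT(0), I would invoke Gromov's theorem. Simple connectivity follows by induction on $|P|$: adjoining a new maximal element $p$ to $P$ introduces the consistent downsets of the form $I \cup \{p\}$ where $I$ contains every predecessor of $p$ and no element inconsistent with $p$; these attach to $\S(P \setminus \{p\})$ along a contractible subcomplex, preserving contractibility. For the flag link condition at a vertex $I$, the neighbors of $I$ correspond either to removing an element $q \in I_{\max}$ or to adding an element $p \notin I$ for which $I \cup \{p\}$ remains a consistent downset. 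If $d$ such neighbors are pairwise connected by $2$-cubes with $I$, then the two PIP axioms --- inconsistent pairs have no common upper bound, and inconsistencies are upward-closed --- together force all candidate additions to be pairwise consistent, so the full $d$-cube indeed belongs to $\S(P)$. This is where the PIP axioms do their essential work.

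For (ii), I would use the hyperplane theory of CAT(0) cube complexes. Let the elements of $P_X$ be the hyperplanes of $X$, namely the parallelism classes of edges under the equivalence generated by ``opposite edges of a common square.'' By Sageev's hyperplane theorem, each hyperplane $H$ separates $X$ into two components; call $H^{+}$ the one not containing the root $v_0$. Define $H \leq H'$ whenever $(H')^{+} \subseteq H^{+}$, and $H \not\leftrightarrow H'$ whenever $H^{+} \cap (H')^{+} = \emptyset$. Both PIP axioms are then immediate from these definitions. The map sending a vertex $v$ to the set of hyperplanes separating $v_0$ from $v$ is a bijection onto the consistent downsets of $P_X$: it lands in downsets because any combinatorial geodesic from $v_0$ to $v$ that crosses $H'$ must first cross every $H \leq H'$, and its injectivity is the standard fact that a vertex in a CAT(0) cube complex is determined by the set of hyperplanes separating it from a chosen basepoint.

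For (iii), one directly verifies that the two constructions are mutually inverse. Under $P \mapsto \S(P) \mapsto P_{\S(P)}$, the hyperplane associated to an element $p \in P$ is the set of edges that ``toggle $p$,'' and the induced order and inconsistency relation recover those of $P$; the opposite composition matches by the vertex-hyperplane bijection above. I would expect the main obstacle to be the flag link condition in (i): this is where the full content of both PIP axioms is used simultaneously, and it is this step that makes the correspondence tight rather than a loose analogy. Everything else either amounts to unpacking definitions or to invoking standard structural results about hyperplanes in CAT(0) cube complexes.
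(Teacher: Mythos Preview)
The paper does not prove this theorem; it is quoted from \cite{AOS} and used as a black box, so there is no in-paper argument to compare against. Your outline is, in fact, a faithful sketch of the proof given in the cited reference: Ardila--Owen--Sullivant verify Gromov's link condition to show $\S(P)$ is CAT(0), and for the inverse they use the hyperplane structure of a CAT(0) cube complex (building on Sageev and Roller), defining the partial order and inconsistency exactly via halfspace containment and disjointness as you describe. Your identification of the flag link condition as the place where both PIP axioms do their work is accurate. The one step you might tighten is the inductive contractibility argument in (i): the subcomplex along which the new cells attach is the cube complex of the sub-PIP of elements comparable to and consistent with $p$, and you would want to note that this is again of the form $\S(Q)$ for a PIP $Q$, hence contractible by induction; otherwise the sketch is sound.
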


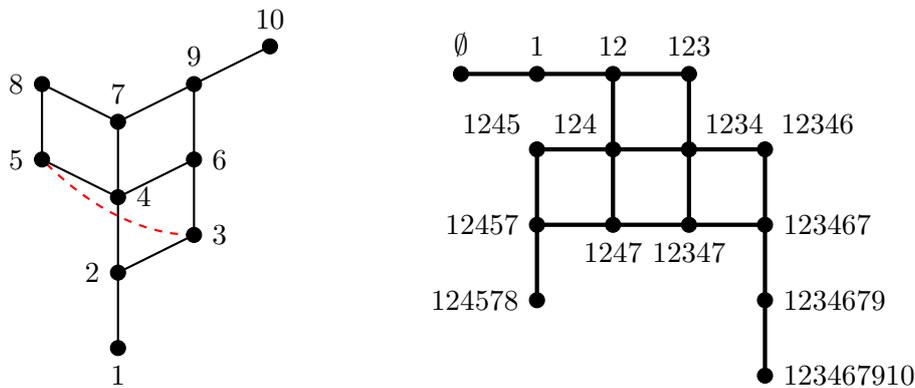
\begin{figure}[h]
\begin{center}
\begin{tikzpicture}
\draw[step=.5cm,white] (-3,-4) grid (0,0);
\draw[thick, -] (0,0) -- (-1,-.5) -- (-2,-1) -- (-3,-.5) -- (-3,-1.5) -- (-2,-2) -- (-2,-1);
\draw[thick, -] (-2,-2) -- (-1,-1.5) -- (-1,-.5);
\draw[thick, -] (-2,-2) -- (-2,-3) -- (-1,-2.5) -- (-1,-1.5);
\draw[thick, -] (-2,-3) -- (-2,-4);
\draw[thick, dashed, red, -] (-1, -2.5) parabola (-3,-1.5);
\node [label=below:1,draw,fill=black,circle,inner sep=2pt,minimum size=2pt] at (-2,-4) {};
\node [label=left:2,draw,fill=black,circle,inner sep=2pt,minimum size=2pt] at (-2,-3) {};
\node [label=right:3,draw,fill=black,circle,inner sep=2pt,minimum size=2pt] at (-1,-2.5) {};
\node [label=right:4,draw,fill=black,circle,inner sep=2pt,minimum size=2pt] at (-2,-2) {};
\node [label=left:5,draw,fill=black,circle,inner sep=2pt,minimum size=2pt] at (-3,-1.5) {};
\node [label=right:6,draw,fill=black,circle,inner sep=2pt,minimum size=2pt] at (-1,-1.5) {};
\node [label=left:8,draw,fill=black,circle,inner sep=2pt,minimum size=2pt] at (-3,-.5) {};
\node [label=above:7,draw,fill=black,circle,inner sep=2pt,minimum size=2pt] at (-2,-1) {};
\node [label=above:9,draw,fill=black,circle,inner sep=2pt,minimum size=2pt] at (-1,-.5) {};
\node [label=above:10,draw,fill=black,circle,inner sep=2pt,minimum size=2pt] at (0,0) {};
\end{tikzpicture}
\qquad 
\qquad
\begin{tikzpicture}
\draw[step=1cm,white, thin] (-4,-4) grid (0,0);
\draw[ultra thick, -] (-4,0) -- (-3,0) -- (-2,0) -- (-1,0) -- (-1,-1) -- (0,-1) -- (0,-2) -- (0,-3) -- (0,-4);
\draw[ultra thick, -] (-2,0) -- (-2,-1) -- (-3,-1) -- (-3,-2) -- (-3,-3);
\draw[ultra thick, -] (-2,-1) -- (-1,-1);
\draw[ultra thick, -] (-3,-2) -- (-2,-2) -- (-1,-2) -- (0,-2);
\draw[ultra thick, -] (-2,-1) -- (-2,-2);
\draw[ultra thick, -] ((-1,-1) -- (-1,-2);
\node [label=above:$\emptyset$,draw,fill=black,circle,inner sep=2pt,minimum size=2pt] at (-4,0) {};
\node [label=above:1,draw,fill=black,circle,inner sep=2pt,minimum size=2pt] at (-3,0) {};
\node [label=above:12,draw,fill=black,circle,inner sep=2pt,minimum size=2pt] at (-2,0) {};
\node [label=above:123,draw,fill=black,circle,inner sep=2pt,minimum size=2pt] at (-1,0) {};
\node [label=above left:1245,draw,fill=black,circle,inner sep=2pt,minimum size=2pt] at (-3,-1) {};
\node [label=above left:124,draw,fill=black,circle,inner sep=2pt,minimum size=2pt] at (-2,-1) {};
\node [label=above right:1234,draw,fill=black,circle,inner sep=2pt,minimum size=2pt] at (-1,-1) {};
\node [label=above right:12346,draw,fill=black,circle,inner sep=2pt,minimum size=2pt] at (0,-1) {};
\node [label=left:12457,draw,fill=black,circle,inner sep=2pt,minimum size=2pt] at (-3,-2) {};
\node [label=left:124578,draw,fill=black,circle,inner sep=2pt,minimum size=2pt] at (-3,-3) {};
\node [label=below:1247,draw,fill=black,circle,inner sep=2pt,minimum size=2pt] at (-2,-2) {};
\node [label=below:12347,draw,fill=black,circle,inner sep=2pt,minimum size=2pt] at (-1,-2) {};
\node [label=right:123467,draw,fill=black,circle,inner sep=2pt,minimum size=2pt] at (0,-2) {};
\node [label=right:1234679,draw,fill=black,circle,inner sep=2pt,minimum size=2pt] at (0,-3) {};
\node [label=right:123467910,draw,fill=black,circle,inner sep=2pt,minimum size=2pt] at (0,-4) {};
\end{tikzpicture}
\end{center}
\caption{A PIP $P \cong C_{2,4}$ and the corresponding CAT(0) cubical complex $\S(P) \cong \S_{2,4}$.}
\label{fig:cubical}
\end{figure}

When a cube complex is CAT(0), Ardila, Owen, and Sullivant \cite{AOS} showed how to find geodesic paths between any two points under various metrics. 
This has consequences for robotic motion planning, among others \cite{ABCG, ArdilaNotices}. We will see here that it also has implications for the mixing times of Markov chains.

\subsection{\textsf{The bottleneck lemma for CAT(0) cube complexes}}


\begin{definition}
Let $G$ be a connected graph, we say that a set of vertices $T \subset V(G)$ is a \emph{vertex separator} or \emph{bottleneck} if the removal of $T$ and the edges incident to $T$ disconnects the graph. 
\end{definition}

\begin{lemma}\label{lem:bottleneck}
Let $P$ be a poset with inconsistent pairs and $S(P)$ be the graph of the corresponding CAT(0) cubical complex. Let $p \nleftrightarrow q$ be an inconsistent pair of $P$, and 
\begin{eqnarray*}
S(P)_{p \nleftrightarrow q} &=& \text{ vertices of $S(P)$ whose consistent order ideals  contain neither } p \text{ nor } q. \\
S(P)_p &=& \text{ vertices of $S(P)$ whose consistent order ideals  contain } p \\
S(P)_q &=& \text{ vertices of $S(P)$ whose consistent order ideals  contain } q 
\end{eqnarray*}
Then $S(P)_{p \nleftrightarrow q}$ is a bottleneck for $S(P)$ that separates the sets $S(P)_p$ and $S(P)_q$ from each other.
\end{lemma}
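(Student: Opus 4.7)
The plan is to use the combinatorial description of $S(P)$ from Definition \ref{def:cubePIP}, where vertices are consistent order ideals of $P$ and edges connect order ideals that differ by a single element. I would first observe that since $p \nleftrightarrow q$, no consistent order ideal can contain both $p$ and $q$, so the vertex set of $S(P)$ decomposes as a disjoint union
\[
V(S(P)) = S(P)_p \, \sqcup \, S(P)_q \, \sqcup \, S(P)_{p \nleftrightarrow q}.
\]

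The heart of the argument is then to check that no edge of $S(P)$ connects $S(P)_p$ directly to $S(P)_q$. Suppose $I, I'$ are consistent order ideals joined by an edge, so that (after possibly swapping) $I' = I \cup \{r\}$ for some $r \notin I$. If $I \in S(P)_p$ (so $p \in I$, $q \notin I$), then $I'$ still contains $p$, so to lie in $S(P)_q$ it would also need to contain $q$, contradicting consistency. Symmetrically, if $I' \in S(P)_p$ then the smaller ideal $I$ either still contains $p$ (and lies in $S(P)_p$), or loses $p$ by the removal of $r=p$ (and then contains neither $p$ nor $q$, so lies in $S(P)_{p\nleftrightarrow q}$). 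Either way, an edge incident to $S(P)_p$ cannot cross into $S(P)_q$.

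Next I would verify that the two sides are actually nonempty, so that the ``bottleneck'' genuinely separates something. For this I would consider the principal downsets $\langle p \rangle = \{r \in P : r \leq p\}$ and $\langle q \rangle$. These are order ideals, and they are consistent: if two inconsistent elements both lay below $p$, then $p$ itself would be a common upper bound of an inconsistent pair, violating axiom (1) of a PIP. Thus $\langle p\rangle \in S(P)_p$ and $\langle q\rangle \in S(P)_q$, so both sets are nonempty, and removing $S(P)_{p\nleftrightarrow q}$ disconnects the graph.

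I do not anticipate a major obstacle here; the statement is essentially an unpacking of the definition of $S(P)$ together with the defining properties of a PIP. The one place requiring a little care is the edge-analysis case split above, where one must track which of $p$ or $q$ could be the element added or deleted across an edge — this is where consistency of the order ideals is used in an essential way.
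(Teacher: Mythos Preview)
Your proposal is correct and follows essentially the same route as the paper: decompose the vertex set using the inconsistency of $p$ and $q$, then show no edge joins $S(P)_p$ to $S(P)_q$. The paper dispatches the edge argument in one line by observing that any $I \in S(P)_p$ and $J \in S(P)_q$ must differ in at least two elements (namely $p$ and $q$), whereas you do an equivalent case split on which element is added or removed; and you additionally check nonemptiness of the two sides via principal downsets, a point the paper leaves implicit.
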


\begin{proof} 
Because $p \nleftrightarrow q$ form an inconsistent pair of $P$, every vertex of $S(P)$ lies in exactly one of these three sets. Thus it suffices to show that there cannot be an edge of $S(P)$ connecting a vertex $v \in S(P)_p$ to vertex $w \in S(P)_q$. But vertex $v$ corresponds to an order ideal $I$ containing $p$ (and hence not containing $q$) and vertex $w$ corresponds to an order ideal $J$ containing $q$ (and hence not containing $p$). It follows that the ideals $I$ and $J$ differ by at least two elements, so there cannot be an edge between $v$ and $w$, as desired.
%
%
%
\end{proof}

\subsection{\textsf{The cube complex $\S_{m,n}$ of monotone paths in a strip is CAT(0)}}

Recall that $\S_{m,n}$ is the cube complex of monotone paths of length $n$ in a strip of height $m$. For example, for $m=2$ and $n=4$, Figure \ref{fig:configs} shows the 10 possible paths, labeled to match Figure \ref{fig:cubical}. This labeling shows that in fact $\S_{2,4}$ is the CAT(0) cube complex in that figure.
Ardila, Bastidas, Ceballos, and Guo \cite{Ard17} proved that this is an instance of a general phenomenon:

\begin{theorem} \label{thm:ABCG} \cite{Ard17}
For any positive integers $m$ and $n$, the cube complex $\S_{m,n}$ of monotone paths of length $n$ in a strip of height $m$ is  CAT(0).  
\end{theorem}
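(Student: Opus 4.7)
The plan is to apply the Ardila--Owen--Sullivant correspondence (Theorem \ref{th:poset}): to show $\mathcal{S}_{m,n}$ is CAT(0), it suffices to choose a root vertex, construct an explicit poset with inconsistent pairs $C_{m,n}$, and exhibit an isomorphism of rooted cube complexes $\mathcal{S}(C_{m,n}) \cong \mathcal{S}_{m,n}$. The natural root to choose is the ``straight'' path $R_{m,n}$ consisting of $n$ consecutive $E$-steps along the bottom of the strip; every other monotone path is reachable from $R_{m,n}$ by a sequence of corner-switches and end-flips.

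First, I would define the elements of $C_{m,n}$ to be the elementary ``move events'' that can occur in some sequence of moves starting from $R_{m,n}$. Concretely, each element is labeled by the location along the path at which it acts together with the direction (up or down) in which it bends the path. Second, I would declare $p \le q$ in $C_{m,n}$ when the move $p$ must be executed before $q$ in every sequence of moves from $R_{m,n}$ that realizes the effect of $q$ -- intuitively, one cannot create a corner at a given location without first bringing the path up to that location. Third, I would declare $p \nleftrightarrow q$ exactly when the two moves cannot simultaneously be ``present'' in any monotone path in the strip; these inconsistencies come from the finite height of the strip, since pushing up and then also pushing down in an incompatible way would force the path outside $S_m$.

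With $C_{m,n}$ in hand, I would verify: (i) the two PIP axioms hold (upward closure of inconsistency, no common upper bound for inconsistent pairs); (ii) consistent order ideals of $C_{m,n}$ are in bijection with monotone paths in the strip, by sending an ideal $I$ to the path obtained from $R_{m,n}$ by performing the moves in $I$ (in any linear extension, which gives the same result since compatible moves commute); (iii) two ideals $I, J$ differ by a single element iff the corresponding paths differ by one local move; and (iv) a $k$-cube of $\mathcal{S}_{m,n}$, corresponding to $k$ pairwise compatible moves at a path $Q$, matches a pair $(I, L)$ with $L \subseteq I_{\max}$ of size $k$, as in Definition \ref{def:cubePIP}. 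The verification of (iv) uses that pairwise compatibility of moves agrees with being simultaneously maximal in an ideal, which in turn follows from the local geometric description of the moves.

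The main obstacle is the combinatorial bookkeeping in defining $C_{m,n}$: one must choose labels for move events that are stable across different sequences of moves leading to the same path, so that $p \le q$ and $p \nleftrightarrow q$ are well-defined. Once the labeling is set up carefully, the order relation can be read off from the height profiles of the intermediate paths, and the inconsistencies come from two concrete obstructions -- the top and the bottom of the strip. The compatibility of moves in $\mathcal{S}_{m,n}$ is local (it only constrains overlapping steps), matching the local nature of inconsistency in $C_{m,n}$, which is what ultimately ensures the cube complexes agree and hence that $\mathcal{S}_{m,n}$ is CAT(0) by Theorem \ref{th:poset}.
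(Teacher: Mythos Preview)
The paper does not itself prove this theorem; it is quoted from \cite{Ard17}. What the present paper does is describe the specific PIP used in that proof --- the \emph{coral PIP} $C_{m,n}$ of Definition~\ref{def:snakePIP} --- and record (Theorem~\ref{thm:ABCG2}) that $\S(C_{m,n}) \cong \S_{m,n}$, from which Theorem~\ref{thm:ABCG} follows via Theorem~\ref{th:poset}.

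Your high-level strategy matches exactly: root $\S_{m,n}$ at the all-horizontal path, write down a PIP, and invoke Theorem~\ref{th:poset}. The template of verifications (i)--(iv) you list is also the right one. But your description of the PIP itself is too vague to constitute a proof, and where it is specific it does not agree with the actual construction. The elements of the coral PIP are not ``location along the path together with direction''; they are pairs $(\lambda,s)$ where $\lambda$ is a \emph{coral snake} --- an oriented path of unit squares subject to a subtle corner-coloring rule (Definition~\ref{def:coralsnake}(ii)) --- and $s$ is an integer. More importantly, your account of inconsistency is off: you say inconsistencies ``come from the finite height of the strip'' and from ``the top and the bottom of the strip'', but in $C_{m,n}$ one has $(\lambda,s)\nleftrightarrow(\mu,t)$ precisely when neither snake is an initial segment of the other. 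The height $m$ constrains which elements \emph{exist} (through $h(\lambda)\le m$), not which pairs are inconsistent. For instance, the low inconsistent pair $a\nleftrightarrow b$ used throughout Section~\ref{sec:CAT(0)} is present for every $m\ge 2$ and has nothing to do with the strip boundary; it simply encodes that the second vertical step of the path cannot point both up and down.

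You correctly identify the combinatorial bookkeeping as the main obstacle, but your sketch does not overcome it: producing stable labels for move events, a well-defined partial order, and the correct inconsistency relation is exactly the content of the coral-snake construction in \cite{Ard17}, and that construction is not recoverable from the outline you give.
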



%

\begin{figure}[h]
\begin{center}
\begin{tabular}{rcrcrc}
$\emptyset$ & 
\begin{tikzpicture}
\draw[step=.5cm,gray] (-2,-1) grid (0,0);
\draw[ultra thick, -, blue] (-2,-1) -- (-1.5,-1) -- (-1,-1) -- (-.5,-1) -- (0,-1);
\end{tikzpicture} &
1&
\begin{tikzpicture}
\draw[step=.5cm,gray] (-2,-1) grid (0,0);
\draw[ultra thick, -, blue] (-2,-1) -- (-1.5,-1) -- (-1,-1) -- (-.5,-1) -- (-.5,-.5);
\end{tikzpicture} &
12 & 
\begin{tikzpicture}
\draw[step=.5cm,gray] (-2,-1) grid (0,0);
\draw[ultra thick, -, blue] (-2,-1) -- (-1.5,-1) -- (-1,-1) -- (-1,-.5) -- (-.5,-.5);
\end{tikzpicture} \\
123&
\begin{tikzpicture}
\draw[step=.5cm,gray] (-2,-1) grid (0,0);
\draw[ultra thick, -, blue] (-2,-1) -- (-1.5,-1) -- (-1,-1) -- (-1,-.5) -- (-1,-0);
\end{tikzpicture} &
124&
\begin{tikzpicture}
\draw[step=.5cm,gray] (-2,-1) grid (0,0);
\draw[ultra thick, -, blue] (-2,-1) -- (-1.5,-1) -- (-1.5,-.5) -- (-1,-.5) -- (-.5,-.5);
\end{tikzpicture} &
1234 & 
\begin{tikzpicture}
\draw[step=.5cm,gray] (-2,-1) grid (0,0);
\draw[ultra thick, -, blue] (-2,-1) -- (-1.5,-1) -- (-1.5,-.5) -- (-1,-.5) -- (-1,0);
\end{tikzpicture} \\
1245 & 
\begin{tikzpicture}
\draw[step=.5cm,gray] (-2,-1) grid (0,0);
\draw[ultra thick, -, blue] (-2,-1) -- (-1.5,-1) -- (-1.5,-.5) -- (-1,-.5) -- (-1,-1);
\end{tikzpicture} &
1247&
\begin{tikzpicture}
\draw[step=.5cm,gray] (-2,-1) grid (0,0);
\draw[ultra thick, -, blue] (-2,-1) -- (-2,-.5) -- (-1.5,-.5) -- (-1,-.5) -- (-.5,-.5);
\end{tikzpicture} &
12346&
\begin{tikzpicture}
\draw[step=.5cm,gray] (-2,-1) grid (0,0);
\draw[ultra thick, -, blue] (-2,-1) -- (-1.5,-1) -- (-1.5,-.5) -- (-1.5,0) -- (-1,0);
\end{tikzpicture} \\
12347 & 
\begin{tikzpicture}
\draw[step=.5cm,gray] (-2,-1) grid (0,0);
\draw[ultra thick, -, blue] (-2,-1) -- (-2,-.5) -- (-1.5,-.5) -- (-1,-.5) -- (-1,0);
\end{tikzpicture} &
12457& 
\begin{tikzpicture}
\draw[step=.5cm,gray] (-2,-1) grid (0,0);
\draw[ultra thick, -, blue] (-2,-1) -- (-2,-.5) -- (-1.5,-.5) -- (-1,-.5) -- (-1,-1);
\end{tikzpicture} &
123467&
\begin{tikzpicture}
\draw[step=.5cm,gray] (-2,-1) grid (0,0);
\draw[ultra thick, -, blue] (-2,-1) -- (-2,-.5) -- (-1.5,-.5) -- (-1.5,0) -- (-1,0);
\end{tikzpicture} \\
124578 & 
\begin{tikzpicture}
\draw[step=.5cm,gray] (-2,-1) grid (0,0);
\draw[ultra thick, -, blue] (-2,-1) -- (-2,-.5) -- (-1.5,-.5) -- (-1.5,-1) -- (-1,-1);
\end{tikzpicture} &
1234679&
\begin{tikzpicture}
\draw[step=.5cm,gray] (-2,-1) grid (0,0);
\draw[ultra thick, -, blue] (-2,-1) -- (-2,-.5) -- (-2,0) -- (-1.5,0) -- (-1,0);
\end{tikzpicture} &
123467910 & 
\begin{tikzpicture}
\draw[step=.5cm,gray] (-2,-1) grid (0,0);
\draw[ultra thick, -, blue] (-2,-1) -- (-2,-.5) -- (-2,0) -- (-1.5,0) -- (-1.5,-.5);
\end{tikzpicture} \\
 
\end{tabular}\\
\end{center}
\caption{The monotone paths of length $4$ in a tunnel of height $2$.}
\label{fig:configs}
\end{figure}
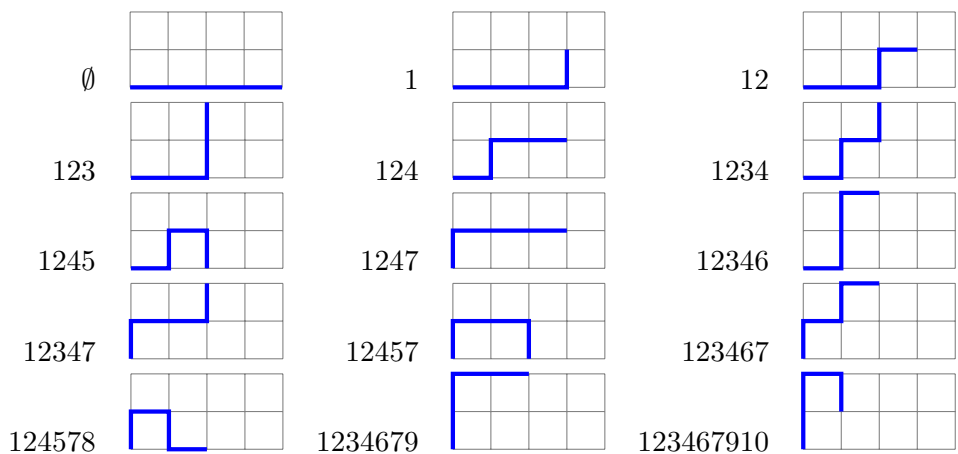

\subsubsection{\textsf{The coral PIP}}

To prove that $\S_{m,n}$ is  CAT(0),  \cite{Ard17} introduced and used the technique shown in Section \ref{sec:CAT(0)-PIP}: the authors introduced the \emph{coral PIP} $C_{m,n}$, and showed that its corresponding CAT(0) cube complex is $\S(C_{m,n}) \cong \S_{m,n}$. To describe the coral PIP, we need to introduce the notion of a \emph{coral snake}.

\begin{definition} \label{def:coralsnake}
A \emph{coral snake} $\lambda$ of height at most $m$ is an oriented path of unit squares, colored alternatingly black and red (starting with black), inside the strip of height~$m$ such that:
\begin{enumerate}[(i)]
\item The snake $\lambda$ starts with the bottom left square of the strip, and takes unit steps up, down, and right.
\item Suppose $\lambda$ turns from a vertical segment $V_1$ to a horizontal segment $H$ to a vertical segment $V_2$ at corners $C_1$ and $C_2$. Then $V_1$ and $V_2$ face the same direction if and only if $C_1$ and $C_2$ have the same color. \label{defcondition:corner}
(Note: we consider the first column of the snake a vertical segment going up, even if it consists of a single cell.) 
\end{enumerate}

\begin{figure}[htbp]
	\centering
		\includegraphics[height = 1.5cm]{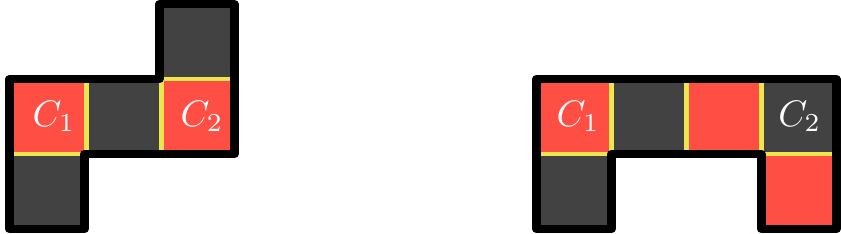}
	\caption{Illustration of condition~\eqref{defcondition:corner} in Definition~\ref{def:coralsnake}.}
	\label{fig:conditioncoralsnakeii}
\end{figure}

The \emph{length} $l(\lambda)$ is the number of unit squares of $\lambda$, the \emph{height} $h(\lambda)$ is the number of  rows it touches, and the \emph{width} $w(\lambda)$ is the number of columns it touches. We say that~$\mu$ \emph{contains}~$\lambda$, in which case we write $\lambda \preceq \mu$, if $\lambda$ is an initial sub-snake of $\mu$ obtained by restricting to the first $k$ cells of $\mu$ for some $k$. We write $\lambda \prec \mu$ if $\lambda \preceq \mu$ and $\lambda \neq \mu$.
For technical reasons, sometimes we will also consider the empty snake $\emptyset$.
\end{definition}

\begin{figure}[htbp]
	\centering
		\includegraphics[height = 1.5cm]{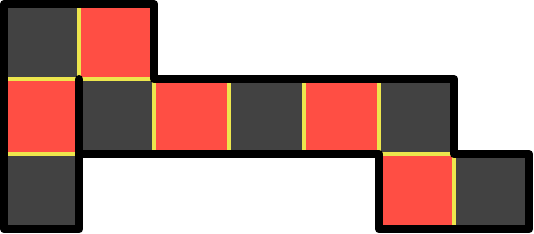} 
		\caption{A coral snake $\lambda$ of length $l(\lambda) = 11$, height $h(\lambda) = 3$, and width $w(\lambda) = 7$. We encourage the reader to check condition $(ii)$. 
		}
	\label{fig:snake}
\end{figure}

Although the colors of a coral snake are a useful visual aid, we sometimes omit them since they are uniquely determined by the shape of the snake.

\begin{definition}\label{def:snakePIP}
Define the \emph{coral PIP} $C_{m,n}$ as follows:

\noindent
$\bullet$ Elements: pairs $(\lambda,s)$ consisting of a coral snake $\lambda \neq \emptyset$ with $h(\lambda) \leq m$ and an integer $0 \leq s \leq n-l(\lambda)-w(\lambda)+1$.


\noindent
$\bullet$ Order: 
$(\lambda,s) \leq (\mu,t)$ if $\lambda \preceq  \mu$ and $s \geq t$.

\noindent
$\bullet$ Inconsistency: $(\lambda,s)  \nleftrightarrow (\mu,t)$ if neither $\lambda$ nor $\mu$ contains the other.

For simplicity, we call the elements $(\lambda,s)$ of the coral PIP \emph{numbered snakes}. We write them by placing the number $s$ in the first cell of $\lambda$.
\end{definition}

The left panel of Figure \ref{fig:PIP9_withpaths} shows the coral PIP $C_{2,9}$. The right panel shows how  $C_{2,8}, C_{2,7}, \ldots$ are subPIPs of $C_{2,9}$; they are obtained from it by removing one colored layer at a time.

\begin{theorem}\cite{Ard17}\label{thm:ABCG2}
The monotone paths of length $n$ in a strip of height $m$ are in bijection with the order ideals of the coral PIP $C_{m,n}$.
\end{theorem}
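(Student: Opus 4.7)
The plan is to construct an explicit bijection $\Phi$ between monotone paths of length $n$ in a strip of height $m$ and consistent order ideals of the coral PIP $C_{m,n}$, thereby identifying such paths with the vertices of $\S(C_{m,n})$ via Theorem \ref{th:poset}.

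To define $\Phi(P)$, I would compare $P$ with the baseline horizontal path $E^n$ and track, cell by cell, the unit squares of the strip that $P$ separates from the baseline. Each maximal connected deviation of $P$ from the baseline sweeps out a region whose cells, listed in the order that $P$ traces them, realize a coral snake $\lambda$; the alternating color-corner condition (ii) of Definition \ref{def:coralsnake} is precisely the combinatorial condition that characterizes which shapes arise from legal deviations of monotone paths in the strip. The offset $s$ records the horizontal position at which the deviation begins. The set $\Phi(P)$ is then all numbered snakes $(\mu,t)$ such that $\mu$ is an initial sub-snake of some deviation of $P$ and $t$ is the corresponding offset.

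With this definition in hand, I would verify three properties.
\begin{enumerate}
\item \emph{Downward closure.} If $(\mu,t) \leq (\lambda,s)$ in $C_{m,n}$ and $(\lambda,s) \in \Phi(P)$, then $\mu \preceq \lambda$ and $t \geq s$, so $\mu$ is the snake recording an earlier, smaller stage of the same deviation of $P$, placed at the same starting column; hence $(\mu,t) \in \Phi(P)$.
\item \emph{Consistency.} Two snakes in $\Phi(P)$ arising from the same deviation of $P$ are nested by construction. Two snakes arising from different deviations are both prefixes of a larger snake recording the cumulative deviation of $P$, so again they are comparable under $\preceq$ and therefore not inconsistent.
\item \emph{Inverse map.} Given a consistent ideal $J \subseteq C_{m,n}$, its maximal elements form a chain of nested numbered snakes (since any two incomparable snakes would be inconsistent); stacking these snake shapes onto the baseline at their prescribed offsets reconstructs a unique monotone path $\Psi(J)$, and a direct check shows that $\Phi$ and $\Psi$ are mutual inverses.
\end{enumerate}

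The main obstacle will be showing that Condition (ii) of Definition \ref{def:coralsnake}, the alternating corner-color rule, captures exactly the alternation of convex and concave turns that appears on the boundary of a monotone path's deviation from the baseline. A cleaner alternative is to proceed by induction on $n$: performing one local move on $P$ (switching a corner or flipping the end) adds or removes a single maximal numbered snake from $\Phi(P)$, so the bijection is built up one cell at a time and the recursive structure of $C_{m,n}$ mirrors that of the paths. This inductive route also prepares the ground for Theorem \ref{thm:ABCG}, since the moves on paths would match the edges of $\S(C_{m,n})$ given by adding or removing maximal elements.
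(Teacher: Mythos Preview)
The paper does not prove this theorem: it is quoted verbatim from \cite{Ard17} (equivalently \cite{ABCG}), and only the statement appears here. So there is no ``paper's own proof'' to compare your plan against. What the paper does reveal, through the proofs of Lemmas~\ref{lemma:S} and~\ref{lemma:U}, is that the bijection in \cite{Ard17} passes through an intermediate object called a \emph{coral snake tableau} $T$ (a single snake-shaped filling encoding the entire path), and the order ideal is then read off as the set of join-irreducibles below $T$. Your plan of reading off numbered snakes directly from the deviations of $P$ from the baseline $E^n$ is morally the same construction, phrased without the tableau intermediary.

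That said, your sketch has a real gap in step~(2) and step~(3). You assert that snakes from ``different deviations'' of $P$ are both prefixes of a larger cumulative snake, but you have not said what that cumulative snake is or why it is a coral snake (i.e., why it satisfies the corner-color condition~(ii)); this is exactly the heart of the argument, and it is where the tableau formalism of \cite{ABCG} earns its keep. In step~(3) you claim the maximal elements of a consistent ideal $J$ form a chain, but this is false in the poset order: $(\lambda,s)$ and $(\mu,t)$ with $\lambda\prec\mu$ and $s<t$ are consistent yet incomparable, so both can be maximal. What is true is only that their snake \emph{shapes} are totally ordered by $\preceq$; reconstructing the path from such a configuration of maximal elements requires more care than ``stacking snake shapes onto the baseline,'' since the several offsets $s$ must be woven together coherently. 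Your inductive alternative (one local move $\leftrightarrow$ one maximal element) is the cleanest route and is essentially how \cite{ABCG} proceeds, but it too needs the explicit tableau/ideal correspondence to verify that every local move really does toggle exactly one maximal element.
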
 

\begin{figure}[tbh]
	\centering
		\includegraphics[height = 8cm]{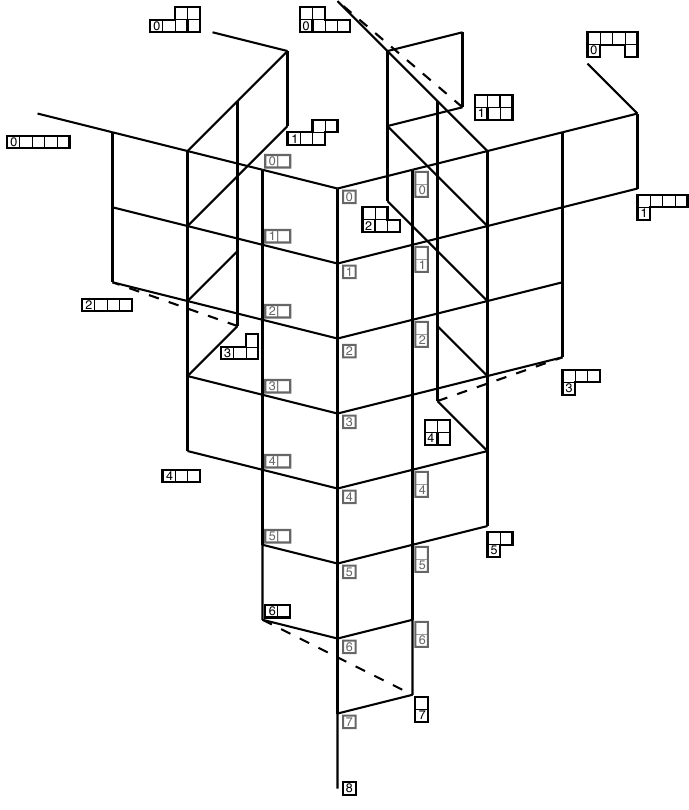}   \qquad 
		\includegraphics[height = 8cm]{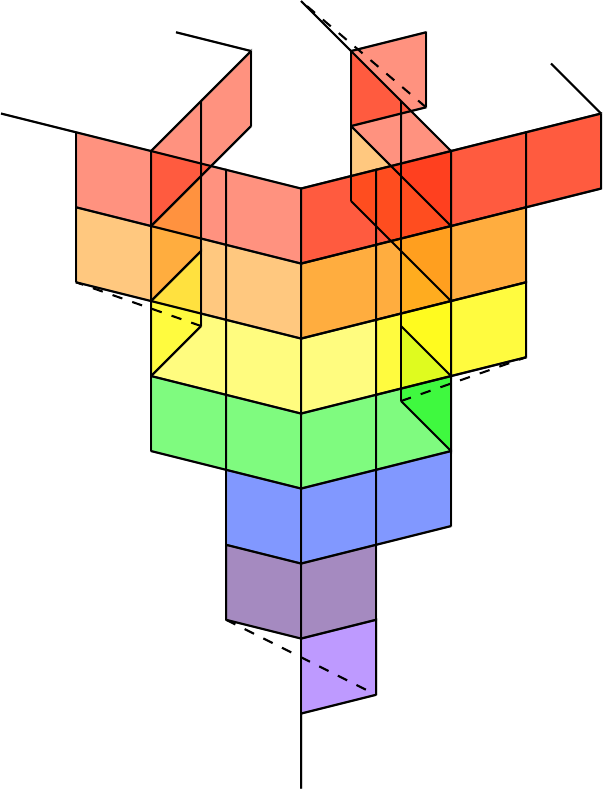}  
		\caption{(a) The coral PIP $C_{2,9}$ with its numbered snakes. (b) The coral PIPs $C_{2,8}, C_{2,7}, \ldots$ are obtained from $C_{2,9}$ by removing one colored layer at a time.}
	\label{fig:PIP9_withpaths}
\end{figure}

\subsubsection{\textsf{Using the coral PIP to find a small bottleneck in $\S_{m,n}$}}

We now use Bottleneck Lemma \ref{lem:bottleneck} to find a small bottleneck in $\S_{m,n}$. Figure \ref{fig:PIP9_withpaths} shows us the way: there is a natural choice of an inconsistent pair in the coral PIP $C_{m,n}$.

\begin{definition}
Let the \emph{low inconsistent pair} of the coral PIP $C_{m,n}$ consist of:
\[
a=
\begin{ytableau}
 \\
\scriptstyle n-2 
\end{ytableau} \,\,\,  ,
\qquad
b=
\begin{ytableau}
\none & \none \\
\scriptstyle n-3 &  
\end{ytableau} \,\,\, .
\]
Let the \emph{backbone} of $C_{m,n}$ be the consistent order ideal
\[
B = 
\left\{ \, 
\begin{ytableau}
\scriptstyle \tiny{n-1}
\end{ytableau}
<
\begin{ytableau}
\scriptstyle n-2
\end{ytableau}
<
\cdots
< 
\begin{ytableau}
\scriptstyle 0
\end{ytableau}
\,
\right\} \subset C_{m,n}.
\]
\end{definition}

Notice that $a$ and $b$ have the largest possible indices among the tableaux of their respective shapes. This inconsistent pair decomposes the set of consistent order ideals into three subsets $\S(C_{m,n}) = \S(C_{m,n})_{a \nleftrightarrow b} \sqcup \S(C_{m,n})_{a} \sqcup \S(C_{m,n})_{b}$, depending on whether an order ideal contains neither $a$ nor $b$, only $a$, or only $b$. 

As an example, in Figure \ref{fig:cubical}, $a=3$, $b=5$, and 
\begin{eqnarray*}
\S(C_{2,4})_{3 \nleftrightarrow 5} &=& \{\emptyset, 1, 12, 124, 1247\}, \\ 
\S(C_{2,4})_{3} &=& \{123, 1234, 12347, 12346, 123467, 1234679, 12346710\}, \\
\S(C_{2,4})_{5} &=& \{1245, 12457, 124578\}.
\end{eqnarray*}
Each one of these order ideals corresponds to a path shown in Figure \ref{fig:configs}. The reader may wish to compare the lists above with those paths, to motivate the following lemmas.

%
%

\bigskip

The statements of the following two lemmas can be understood independently, but the proofs assume familiarity with the details of the bijection $\S(C_{m,n}) \rightarrow \S_{m,n}$ of Theorem \ref{thm:ABCG2}, as explained in Lemma 5.9 and Theorem 5.5 of \cite{ABCG}.

\begin{lemma}\label{lemma:S}
The bijection $\S(C_{m,n}) \rightarrow \S_{m,n}$ of Theorem \ref{thm:ABCG2} restricts to a bijection
\[
\S(C_{m,n})_{a \nleftrightarrow b} \rightarrow \text{ paths containing at most one vertical step}.
\]
There are $n+1$ such paths.
\end{lemma}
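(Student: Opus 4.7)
I would prove the lemma in two steps: first identify $\S(C_{m,n})_{a \nleftrightarrow b}$ combinatorially as the downsets of the backbone chain $B$, then translate to paths via the bijection of Theorem \ref{thm:ABCG2} and count.

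For the first step, the key observation is that every coral snake $\mu$ with $|\mu| \geq 2$ has either $V \preceq \mu$ or $H \preceq \mu$, since the $2$-cell initial sub-snake (starting at the bottom-left cell) is the vertical domino $V$ or the horizontal domino $H$. Consequently, if a consistent order ideal $I$ contains a numbered snake $(\mu, t)$ with $|\mu| \geq 2$, then the validity constraint $t \leq n - l(\mu) - w(\mu) + 1$ forces $t \leq n-2$ or $t \leq n-3$ respectively, and monotonicity of the order ideal forces $I$ to contain $(V, n-2) = a$ or $(H, n-3) = b$. The converse is immediate, so $\S(C_{m,n})_{a \nleftrightarrow b}$ consists exactly of the consistent order ideals of single-cell numbered snakes. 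Since $(\mathbf{1}, s) \leq (\mathbf{1}, t)$ iff $s \geq t$, these form the backbone chain $B : (\mathbf{1}, n-1) < (\mathbf{1}, n-2) < \cdots < (\mathbf{1}, 0)$, whose $n+1$ downsets are $\emptyset$ and $I_k = \{(\mathbf{1}, s) : s \geq k\}$ for $0 \leq k \leq n-1$.

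For the second step, I would apply the explicit bijection $\S(C_{m,n}) \to \S_{m,n}$ described in Lemma 5.9 and Theorem 5.5 of \cite{ABCG}. Under that bijection, the chain $\emptyset \subset I_{n-1} \subset \cdots \subset I_0$ in $\S(C_{m,n})_{a \nleftrightarrow b}$ maps to the chain of paths $E^n,\, E^{n-1}N,\, E^{n-2}NE,\, \ldots,\, N E^{n-1}$ in $\S_{m,n}$: adding the first element ``flips the end'' of $E^n$ to produce an N-step, and each subsequent addition ``switches corners'' to move that single N-step one position earlier. Each such path has at most one vertical step (necessarily an N, since an S-step is impossible from $y = 0$), so the $n+1$ ideals in $\S(C_{m,n})_{a \nleftrightarrow b}$ correspond bijectively to the $n+1$ monotone paths $E^n$ and $E^i N E^{n-1-i}$ for $i = 0, 1, \ldots, n-1$, confirming both the restricted bijection and the count.

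The main obstacle is verifying the precise action of the bijection on single-cell-snake ideals; this requires a careful reading of the construction in \cite{ABCG} to match the label $s$ of $(\mathbf{1}, s)$ with the horizontal position of the induced N-step. A sanity check on the case $m=2,\,n=4$ (Figures \ref{fig:cubical} and \ref{fig:configs}) is useful: the five ideals $\emptyset,\, \{1\},\, \{1,2\},\, \{1,2,4\},\, \{1,2,4,7\}$ correspond to the paths EEEE, EEEN, EENE, ENEE, NEEE, respectively.
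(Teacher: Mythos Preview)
Your proposal is correct and follows essentially the same approach as the paper: both arguments show that an ideal avoiding $a$ and $b$ cannot contain any numbered snake of length $\geq 2$ (by passing to the $2$-cell initial sub-snake and using the index bound), hence such ideals are exactly the downsets of the backbone chain, which then correspond under the bijection to the $n+1$ paths $E^n$ and $E^k N E^{n-1-k}$. Your write-up is in fact slightly more explicit than the paper's, since you spell out the chain of moves (flip the end, then successively switch corners) that carries $E^n$ through the list, and you include the $m=2,\,n=4$ sanity check.
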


\begin{proof}
We claim that the order ideals $I$ in $\S(C_{m,n})_{a \nleftrightarrow b}$ are precisely those that only contain numbered snakes of length $1$. 
Indeed, assume $I \in \S(C_{m,n})_{a \nleftrightarrow b}$. We claim that $I$ only contains numbered snakes of length $1$. 
Indeed, if $I$ contained a numbered snake $(\lambda,s)$ of length at least $2$, then it would also contain the numbered snake $(\lambda_2,s) \leq (\lambda, s)$, where $\lambda_2$ consists of the first two squares of $\lambda$. If $\lambda_2$ consists of two vertically arranged squares, then $s \leq n-2-1+1 = n-2$ by the definition of $C_{m,n}$, so $I$ also contains $a = (\lambda_2, n-2) \leq (\lambda_2,s)$, a contradiction. Similarly, if $\lambda_2$ consists of two horizontally arranged squares, then $s \leq n-2-2+1 = n-3$, so $I$ also contains $b = (\lambda_2, n-3) \leq (\lambda_2,s)$, a contradiction.
The converse is clear.

It follows that $\S(C_{m,n})_{a \nleftrightarrow b}$ consists of the empty ideal -- which corresponds to the horizontal path -- and for
 $0 \leq k \leq n-1$, the order ideal generated by $(\square, k)$, which corresponds to the path that takes $k$ horizontal steps, then one vertical step, and then $n-1-k$ horizontal steps, as desired.
\end{proof}

\begin{lemma} \label{lemma:U}
The bijection $\S(C_{m,n}) \rightarrow \S_{m,n}$ of Theorem \ref{thm:ABCG2} restricts to a bijection
\[
\S(C_{m,n})_{b} \rightarrow \text{ paths that contain a second vertical step, which points down}.
\]
The number of such paths is
\[
u_m(n) := c_m(n-4) + 2c_m(n-5) + 3c_m(n-6) + \cdots + (n-3)c_m(0).
\]
\end{lemma}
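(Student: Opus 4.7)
The plan mirrors the proof of Lemma \ref{lemma:S}: first describe the consistent order ideals of $C_{m,n}$ containing $b$, translate them to monotone paths via the bijection of Theorem \ref{thm:ABCG2}, and then enumerate.

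For the bijection, I would use the explicit correspondence in Lemma 5.9 and Theorem 5.5 of \cite{ABCG} to identify the image of $\S(C_{m,n})_b$ in $\S_{m,n}$. The key observation is that the horizontal two-cell snake $\lambda_2$ records a ``right-then-down'' feature of the path; the element $b = (\lambda_2, n-3)$ sits at the unique maximal position allowed for $\lambda_2$, so including $b$ in an ideal corresponds to the path having its first descending step as the second vertical step. The inconsistency $a \nleftrightarrow b$ reflects that this second vertical step cannot simultaneously point up (encoded by $a$) and down (encoded by $b$). Since the first vertical step of any monotone path must be $N$ (the path starts at height zero), this yields the claimed bijection between $\S(C_{m,n})_b$ and paths whose second vertical step is $S$.

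For the enumeration, each such path has a unique decomposition
\[
E^{a_0} \, N \, E^{a_1} \, S \, R,
\]
with $a_0 \geq 0$, $a_1 \geq 1$ (to avoid the $N$-$S$ retrace) and $R$ a tail of length $r = n - a_0 - a_1 - 2 \geq 0$ starting at height zero. Since $R$ can begin neither with $N$ (which would retrace the preceding $S$) nor with $S$ (which would leave the strip), either $R = \emptyset$ or $R = E R'$ for an arbitrary monotone path $R'$ of length $r - 1$. Summing the second case over $s = a_0 + a_1 \in \{1, \ldots, n-3\}$, noting that there are $s$ admissible pairs $(a_0, a_1)$ for each such $s$, yields
\[
\sum_{s=1}^{n-3} s \cdot c_m(n - 3 - s) = c_m(n-4) + 2 c_m(n-5) + \cdots + (n-3) c_m(0),
\]
which, combined with the contribution from the $R = \emptyset$ case, assembles into $u_m(n)$.

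The main obstacle is the bijection step: pinning down precisely which monotone paths arise as images of order ideals containing $b$ requires a careful reading of the ABCG bijection and the role played there by the snake $\lambda_2$. Once this is in hand, the case analysis above is routine, and I would cross-check the enumeration against the worked example in Figures \ref{fig:cubical} and \ref{fig:configs} to confirm consistency before writing the general argument.
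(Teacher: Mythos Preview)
Your approach mirrors the paper's: both establish the bijection via the ABCG correspondence and then enumerate by the positions of the first two vertical steps (your decomposition $E^{a_0} N E^{a_1} S R$ is the paper's parametrization by $j = a_0+1$ and $k = a_0+a_1+2$).

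There is, however, a slip in your final accounting. The displayed sum $\sum_{s=1}^{n-3} s \, c_m(n-3-s)$ is already literally equal to the stated $u_m(n)$, so you cannot then add the $R = \emptyset$ contribution and still land on $u_m(n)$. That boundary case contributes $n-2$ further paths (those of the form $E^{a_0} N E^{a_1} S$ with $a_0 + a_1 = n-2$), and your suggested cross-check against the worked example exposes this: for $m=2$, $n=4$ one has $|\S(C_{2,4})_5| = 3$ from Figures~\ref{fig:cubical} and~\ref{fig:configs}, while the formula gives $u_2(4) = c_2(0) = 1$. The paper's own proof makes the same omission---it asserts ``step $k+1$ must be horizontal'' without treating $k=n$ separately---so the stated formula undercounts by $n-2$. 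This is polynomial in $n$ and hence irrelevant to the asymptotics downstream in Lemma~\ref{lemma:bottleneck}, but you should flag the boundary term explicitly rather than claim it folds into $u_m(n)$.
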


\begin{proof}
First consider an order ideal $I$ in $\S(C_{m,n})_{b}$. Then $b=(\square\hspace{-.05cm}\square, n-2) \in I$, so
the first two steps of the \emph{coral snake tableaux} $T$ corresponding to $I$ in \cite[Lemma 5.9]{ABCG} are horizontally arranged.
In the corresponding monotone path, the first and second vertical steps 
point up and down, respectively. 

Conversely, consider a monotone path $P$ whose first two vertical steps are the $j$th and $k$th, which  point up and down, respectively for some $1 \leq j < k \leq n$. The corresponding coral snake tableaux $T$ and its decomposition into join-irreducibles -- as described in the proof of \cite[Theorem 5.5]{ABCG}, 
starts:
\[
T = 
\begin{ytableau}
\scriptstyle j-1 & \scriptstyle k-2  
\end{ytableau} \cdots 
= 
\begin{ytableau}
\scriptstyle j-1 
\end{ytableau} 
\vee
\begin{ytableau}
\scriptstyle k-3 & \scriptstyle k-2  
\end{ytableau} 
\vee \cdots
\]

%
The second join-irreducible above corresponds to element $(\square\hspace{-.05cm}\square, k-3)$ of $C_{m,n}$, which must be in the ideal $I$ corresponding to $P$. But then
$(\square\hspace{-.05cm}\square, k-3) \geq (\square\hspace{-.05cm}\square, n-3) = b$ implies that the  ideal $I$ contains $b$ as well, as desired.

Now let us count the number of arm configurations for a given choice of $k \geq 3$. There are $k-2$ choices for the value of $j$, and this choice entirely determines the first $k$ steps of the path. Step $k+1$ must be horizontal, and there are $c_m(n-k-1)$ choices for the rest of the path. It follows that $\displaystyle u_m(n) = \sum_{k \geq 3} (k-2)c_m(n-k-1)$, as desired.
\end{proof}

%

These combinatorial considerations allows us to show that the small bottleneck of $n+1$ vertices in $\S(C_{m,n})_{a \nleftrightarrow b}$ separates the cube complex $\S(C_{m,n})$ into two parts $\S(C_{m,n})_{a}$ and $\S(C_{m,n})_b$ that each contain roughly a constant fraction of the (exponentially many) vertices.

\begin{lemma}\label{lemma:bottleneck}
For any fixed height $m \geq 2$ of the strip, there is a constant $0<C_m < 1$ such that
\[
\lim_{n \rightarrow \infty} \frac{\S(C_{m,n})_{b}}{\S(C_{m,n})} = 
\lim_{n \rightarrow \infty} \frac{u_m(n)}{c_m(n)} = C_m.
\]
The constant $C_m$ decreases with $m$, and approaches $1.5 - \sqrt2 \approx 0.0858\ldots$ as $m$ goes to infinity.
\end{lemma}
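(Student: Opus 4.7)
The plan is to compute the asymptotics of $u_m(n)$ by passing to generating functions and invoking the singularity analysis of Section \ref{sec:enum}. First I would recognize the sum in Lemma \ref{lemma:U} as a convolution: substituting $l = k-2$ and $j = n-k-1$ gives
\[
U_m(x) \;:=\; \sum_{n \geq 0} u_m(n)\, x^n
= \Bigl(\sum_{l \geq 1} l\, x^{l+2}\Bigr)\Bigl(\sum_{j \geq 0} c_m(j)\, x^{j+1}\Bigr)
= \frac{x^4}{(1-x)^2}\, C_m(x),
\]
where $C_m(x) = \sum_{n \geq 0} c_m(n) x^n$.

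Next I would apply Lemma \ref{lem:asymp} to $U_m(x)$, which is rational by Theorem \ref{thm:detformula}. Its denominator has as its smallest-modulus root the simple pole $x = 1/r_m$ inherited from $C_m(x)$: indeed, by Proposition \ref{prop:Perron} we have $r_m > 1$ for $m \geq 1$, so the double pole newly introduced at $x = 1$ lies strictly further from the origin, and by Perron--Frobenius (Lemma \ref{lem:Perron}) all other poles of $C_m(x)$ lie further still. The pole at $1/r_m$ thus remains simple and strictly dominant, and Lemma \ref{lem:asymp} amounts to multiplying the asymptotic $c_m(n) \sim q_m r_m^n$ from Corollary \ref{cor:c_mPerron} by the value of the extra factor $x^4/(1-x)^2$ at $x = 1/r_m$:
\[
u_m(n) \sim \frac{(1/r_m)^4}{(1-1/r_m)^2}\, q_m\, r_m^n = \frac{q_m\, r_m^n}{r_m^2 (r_m - 1)^2}.
\]
Dividing by $c_m(n)$ yields the formula $C_m = 1/(r_m^2 (r_m-1)^2)$.

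Finally, the monotonicity of $C_m$ is immediate from Proposition \ref{prop:Perron}: since $r_m > 1$ is strictly increasing, both $r_m^2$ and $(r_m-1)^2$ strictly increase. The limiting value follows by substituting $r_m \to 1 + \sqrt 2$:
\[
\lim_{m \to \infty} C_m = \frac{1}{(1+\sqrt 2)^2 \cdot 2} = \frac{3 - 2\sqrt 2}{2} = \tfrac{3}{2} - \sqrt 2.
\]

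The main subtlety is checking that the hypotheses of Lemma \ref{lem:asymp} carry over to $U_m(x)$: one needs the dominant pole at $1/r_m$ to remain simple and strictly smallest, and that no cancellation between numerator and denominator destroys it. Both conditions are immediate from $r_m > 1$ and the nonvanishing of $x^4/(1-x)^2$ at $x = 1/r_m$; once this verification is made, the rest of the argument is routine.
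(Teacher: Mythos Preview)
Your proof is correct and follows essentially the same route as the paper: both pass to generating functions, recognize $U_m(x) = \dfrac{x^4}{(1-x)^2}\,C_m(x)$, observe that the simple dominant pole at $1/r_m$ survives (since $r_m>1$), read off $C_m = 1/\bigl(r_m^2(r_m-1)^2\bigr)$ via Lemma~\ref{lem:asymp}, and then invoke Proposition~\ref{prop:Perron} for the monotonicity and the limit. Your write-up is slightly more explicit than the paper's in checking that the new pole at $x=1$ does not interfere and that no cancellation kills the pole at $1/r_m$, but the argument is the same.
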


\begin{proof} 
We saw that the generating function for $c_m(n)$ is rational
\[
\sum_{n \geq 0} c_m(n) x^n = \frac{p(x)}{q(x)}
\]
and Lemma \ref{lemma:U} shows that
\[
\sum_{n \geq 0} u_m(n) x^n = 
(\sum_{n \geq 0} u_m(n) x^n) (x^4 + 2x^5+3x^6+ \cdots) 
= 
\frac{p(x)}{q(x)} \frac{x^4}{(1-x)^2}.
\]
Furthermore, as discussed in Corollary \ref{cor:c_mPerron}, the denominators $q(x)$ and $q(x)(1-x)^2$ have a simple factor $1-r_mx$ that dominates the others, where $r_m > 1$ is the Perron eigenvalue of $A_m$, so that 
Corollary \ref{cor:c_mPerron} then tells us that 
\[
c_m(n) \sim q_m \cdot r_m^n \qquad \text{and} \qquad  u_m(n) \sim v_m \cdot r_m^n, \qquad 
\]
where 
\[
q_m=\frac{-{r_m} p(1/r_m)}{q'(1/r_m)}
\]
and
\begin{eqnarray*}
v_m & = & \frac{-r_m [p(x)x^4]_{x=1/r_m}}{[q(x)(1-x)^2]'|_{x=1/r_m}} \\
&=& \frac{-r_m p(1/r_m)}{q'(1/r_m)(1-\frac{1}{r_m})^2 \, r_m^4.}
\end{eqnarray*}
taking into account that $q(1/r_m)=0$.
It follows that
\[
\lim_{n \rightarrow \infty} \frac{u_m(n)}{c_m(n)} = \frac{v_m}{q_m} = \frac{1}{r_m^2(r_m-1)^2}.
\]
Since the Perron eigenvalues $r_m$ increase starting at $1.618\ldots$ and converging to $1+\sqrt{2}$ by Proposition \ref{prop:Perron}, the constants $C_m = \frac{1}{r_m^2(r_m-1)^2}$ decrease starting at $0.345\ldots$ and converge to $1.5 - \sqrt2$ as desired.
\end{proof}

\section{\textsf{The Markov chains of monotone paths in a strip mix slowly.}}  \label{sec:Markov}

We are finally ready to turn to the main goal of this paper: to investigate the mixing times of two natural Markov chains on the set of monotone paths of length $n$ in a strip of height $m$, based on the local moves introduced in Section \ref{sec:intro}. We can think of these as random walks on the graph $S_{m,n}$.

\subsection{\textsf{Preliminaries on Markov chains}\label{sec:MC}}

Let us review some basic facts about the mixing of Markov chains; for a more detailed account, see for example \cite{Levin09}.

\begin{definition} \label{Markov-Chain}
A sequence of random variables $(X_0,X_1, ...)$ is a \emph{finite Markov chain} with finite state space $\Omega$ and \emph{transition matrix} $P$ if for all $x,y \in \Omega$, all $t \geq 1$, and all events $H_{t-1} = \bigcap_{0 \leq s \leq t-1} (X_s = x_s)$ satisfying \textbf{P}($H_{t-1} \cap (X_t = x)) > 0$, we have 
\[
\mathbf{P}(X_{t+1} = y  \, | \, H_{t-1} \cap (X_t = x)) = \mathbf{P}(X_{t+1} = y \, | \, X_t = x) = P(x,y).
\]
\end{definition}

In words, the Markov property requires that the probability $P(x,y)$ of moving from the current state $X_t=x$  to the next state $X_{t+1}=y$ does not depend on any earlier states. 

A Markov chain is \emph{irreducible} if for any $x, y \in \Omega$ there exists an integer $r \ge 0$ such that $P^r(x,y) > 0$. The \emph{period} of a state $x$ is the greatest common divisor of the \emph{return times} $t$ such that $P^t(x,x)>0$. A Markov chain is \emph{aperiodic} if every state $x$ has period $1$. 

\begin{definition}\label{def:stationary}
A probability distribution $\pi$ over $\Omega$ is a \emph{stationary distribution} for a Markov chain on $\Omega$ with transition matrix $P$  if $\pi=\pi P$. 
\end{definition}

\begin{theorem} \label{thm:stationary}
If a Markov chain with transition matrix $P$ is irreducible and aperiodic then it has a unique stationary distribution $\pi$. We have, for all $x,y \in \Omega$,
\[
\lim_{t \to \infty} P^t(x,y) = \pi(y).
\]
\end{theorem}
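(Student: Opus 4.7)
This is a classical textbook result (the convergence theorem for finite Markov chains), so the plan is to outline the standard proof rather than invent anything new. I would decompose the statement into three claims---existence of a stationary $\pi$, uniqueness, and convergence $P^t(x,\cdot)\to\pi$---and prove them in that order, with the bulk of the work going into a coupling argument that delivers uniqueness and convergence simultaneously.

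First I would establish the elementary number-theoretic lemma that powers everything: under irreducibility and aperiodicity, there exists $r_0$ such that $P^r(x,y)>0$ for \emph{all} $x,y\in\Omega$ and all $r\ge r_0$. The proof has two parts. For each $x$, the set $R(x)=\{t\ge 1:P^t(x,x)>0\}$ is closed under addition and has gcd $1$ by aperiodicity, so by the Chicken McNugget / Sylvester--Frobenius argument $R(x)$ contains every sufficiently large integer. Combining this with irreducibility (pick a path $x\to y$ of some length $\ell(x,y)$ and pre/post-compose with self-loops of length in $R(x)$ or $R(y)$), then taking the maximum over the finite state space $\Omega$, one obtains a single $r_0$ that works uniformly. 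Call $\delta=\min_{x,y}P^{r_0}(x,y)>0$.

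Next I would establish existence of $\pi$ by a standard Cesàro/compactness argument: starting from any $x_0$, the sequence of empirical occupation measures $\pi_t=\tfrac{1}{t}\sum_{s=0}^{t-1}\delta_{x_0}P^s$ lies in the compact simplex of probability measures on $\Omega$, and any subsequential limit $\pi$ satisfies $\pi=\pi P$ because $\|\pi_t P-\pi_t\|\le 2/t\to 0$.

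The main step, which handles both uniqueness and convergence at once, is a coupling argument. Given any two initial distributions $\mu,\nu$ (later specialized to $\mu=\delta_x$ and $\nu=\pi$), construct two chains $(X_t),(Y_t)$ on a common probability space with marginals $X_t\sim\mu P^t$ and $Y_t\sim\nu P^t$, where the transitions are run independently until the first time $\tau=\min\{t:X_t=Y_t\}$ and run identically thereafter. Using the uniform positivity $P^{r_0}(x,y)\ge\delta$ from Step 1, at every window of $r_0$ consecutive steps the probability that $X$ and $Y$ both hit any fixed state $y^\star$ is at least $\delta^2$, so
\[
\mathbf{P}(\tau>k r_0)\le (1-\delta^2)^k,
\]
hence $\tau<\infty$ almost surely. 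The standard coupling inequality then gives
\[
\|\mu P^t-\nu P^t\|_{TV}\le \mathbf{P}(X_t\ne Y_t)=\mathbf{P}(\tau>t)\to 0.
\]
Taking $\mu=\pi$ stationary and $\nu=\delta_x$ yields $P^t(x,\cdot)\to\pi$; taking two stationary distributions $\pi,\pi'$ for $\mu,\nu$ gives $\pi=\pi'$, proving uniqueness.

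The main obstacle---really the only nontrivial step---is the uniform positivity lemma in Step 1, because it requires combining irreducibility (which gives connectivity of the transition graph) with aperiodicity (which is a gcd condition) to get a \emph{single} exponent that works for all entries. Everything else is either compactness on a finite simplex or the clean bookkeeping of the coupling inequality. Since this result is standard, in the final writeup I would likely compress the proof and cite \cite{Levin09}, where it appears as the Convergence Theorem in Chapter 4.
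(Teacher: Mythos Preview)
Your proof outline is correct and is essentially the standard coupling proof of the convergence theorem as it appears in \cite{Levin09}. Note, however, that the paper does not give its own proof of this statement: Theorem~\ref{thm:stationary} is stated as background material in the preliminaries section, with the reader referred to \cite{Levin09} for details. So there is no in-paper argument to compare against; your plan to compress and cite \cite{Levin09} is exactly what the paper itself does.
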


\begin{propdef}\cite[Proposition 1.19]{Levin09}
Suppose a Markov chain on $\Omega$ has transition  matrix $P$. If a probability distribution $\pi$ on $\Omega$ satisfies
\[
\pi(x)P(x,y) = \pi(y) P(y,x) \ \ \ \ \ \forall x,y \in \Omega
\]
the chain is said to be \textit{reversible} with respect to $\pi$, and moreover, $\pi$ is a stationary distribution for the Markov chain.
\end{propdef}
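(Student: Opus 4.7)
The plan is to verify the defining equation $\pi = \pi P$ of a stationary distribution (Definition \ref{def:stationary}). The calling-it-reversible part requires nothing to prove; only the \emph{moreover} clause --- that $\pi$ is stationary --- needs justification. Concretely, I would fix an arbitrary state $y \in \Omega$ and show $(\pi P)(y) = \pi(y)$.

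First I would expand $(\pi P)(y) = \sum_{x \in \Omega} \pi(x)\, P(x,y)$ from the definition of matrix multiplication. Then I would apply the detailed balance hypothesis term by term, replacing $\pi(x)P(x,y)$ by $\pi(y) P(y,x)$, to rewrite the sum as $\pi(y) \sum_{x \in \Omega} P(y,x)$. Since $P$ is the transition matrix of a Markov chain, each row is a probability distribution on $\Omega$, so $\sum_{x \in \Omega} P(y,x) = 1$. This yields $(\pi P)(y) = \pi(y)$ for every $y$, which is exactly the condition $\pi = \pi P$ of Definition \ref{def:stationary}.

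I do not expect any obstacle: the claim is a standard textbook fact, and the argument is a one-line manipulation combining detailed balance with the stochasticity of $P$. The only subtlety worth flagging is that $\pi$ is assumed \emph{a priori} to be a probability distribution (so that the normalization $\sum_x \pi(x) = 1$ holds and the question of stationarity is meaningful); no further hypotheses on $P$, such as irreducibility or aperiodicity, are needed for this particular conclusion.
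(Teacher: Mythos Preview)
Your argument is correct and is exactly the standard proof: fix $y$, use detailed balance to swap $\pi(x)P(x,y)$ for $\pi(y)P(y,x)$, and sum out using row-stochasticity of $P$. The paper does not give its own proof of this proposition/definition; it simply quotes the result from \cite{Levin09}, so there is nothing further to compare.
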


Consequently, if a Markov chain is irreducible, aperiodic, and reversible with respect to a distribution $\pi$, then $\pi$ is the unique stationary distribution for the chain. 

In order to quantify how quickly the chain converges to stationarity, we introduce the notions of total variation distance and mixing time.


\begin{propdef} \cite[Proposition 4.2]{Levin09} \label{total-variation}
The \emph{total variation distance} between two probability distributions $\mu$ and $\nu$ on a state space $\Omega$ is:
\begin{eqnarray*}
||\mu - \nu||_{TV} &:=& \max_{A \subseteq \Omega} |\mu(A) - \nu(A)| \\
&=& \frac12 \sum_{x \in \Omega} | \mu(x) - \nu(x)|.
\end{eqnarray*}
\end{propdef}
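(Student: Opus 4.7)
The plan is to show that the two expressions given for $\|\mu-\nu\|_{TV}$ — namely the maximum of $|\mu(A)-\nu(A)|$ over $A\subseteq\Omega$ and half the $\ell^1$-distance — coincide, by exhibiting the subset $A$ that achieves the maximum. The natural candidate is
\[
B = \{x \in \Omega : \mu(x) \geq \nu(x)\},
\]
since the signed quantity $\mu(A)-\nu(A) = \sum_{x\in A}(\mu(x)-\nu(x))$ is a sum of terms that are positive exactly for $x\in B$. So I would first observe that extending $A$ by any point of $B$ only increases this sum, while shrinking $A$ by any point of $B^c$ also increases it; hence $B$ attains the maximum of $\mu(A)-\nu(A)$.

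Next I would record the dual observation that $B^c$ maximizes $\nu(A)-\mu(A)$, and that the two extremal values agree: since $\mu(\Omega)=\nu(\Omega)=1$, we have
\[
\mu(B)-\nu(B) \;=\; \nu(B^c)-\mu(B^c).
\]
Combining these two facts gives
\[
\max_{A\subseteq\Omega} |\mu(A)-\nu(A)| \;=\; \mu(B)-\nu(B).
\]

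Finally I would split the $\ell^1$ sum according to the sign of $\mu(x)-\nu(x)$:
\[
\sum_{x\in\Omega}|\mu(x)-\nu(x)| = \sum_{x\in B}(\mu(x)-\nu(x)) + \sum_{x\in B^c}(\nu(x)-\mu(x)) = (\mu(B)-\nu(B)) + (\nu(B^c)-\mu(B^c)),
\]
which by the identity above equals $2(\mu(B)-\nu(B))$. Dividing by $2$ matches the maximum just computed, giving the claimed equality.

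There is no real obstacle here; the argument is elementary and essentially symbolic. The only mild subtlety is that one must be a bit careful about the set $\{x: \mu(x)=\nu(x)\}$, where the choice of $B$ is not unique. This does not affect the final value, since points in this set contribute $0$ to both expressions, but it does mean the maximizing set $A$ is not unique. I would mention this briefly and move on.
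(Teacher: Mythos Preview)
Your argument is correct and is exactly the standard proof (as in Levin--Peres--Wilmer). Note, however, that the paper does not actually prove this statement: it simply cites it as \cite[Proposition 4.2]{Levin09} and moves on, so there is nothing to compare against beyond observing that your proof matches the cited reference.
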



\begin{definition} \label{Markov-total-variation-distance}
For a Markov chain $(X_0,X_1, ...)$ we define
\[
d(t) \coloneqq \max\limits_{x \in \Omega} || P^t(x, \cdot) - \pi(\cdot) ||_{TV}
\]
The \emph{$\varepsilon$-mixing time} of the Markov chain is defined to be 
\[
\tau(\varepsilon) \coloneqq \min \{t : d(t) \leq \varepsilon \}.
\]
\end{definition}

The distance $d(t)$ measures how far the farthest $t$-step distribution $P^t(x, \cdot)$ starting at an $x \in \Omega$ is from the stationary distribution $\pi$. The mixing time tells us how many steps we need to take until the $t$-step distribution starting at any state is within $\varepsilon$ of the stationary distribution, measured in total variation distance.


%

The bottleneck ratio of a Markov chain is a geometric quantity that can provide upper as well as lower bounds on the mixing time. In our case, it is the lower bound which is relevant.

\begin{definition}\label{conductance}
The \emph{bottleneck ratio} or \emph{conductance} of an irreducible and aperiodic Markov chain on $\Omega$ with transition matrix $P$ is given by
\begin{center}
 $\Phi \defeq \displaystyle \min\limits_{S\subset\Omega: 0<\pi(S)\leq\frac{1}{2}} \displaystyle\frac{\sum\limits_{x\in S,y\notin S} \pi(x)P(x,y)}{\pi(S)}.$
 \end{center}
\end{definition}

In what follows in the rest of this section, we will always assume that the Markov chain in question is irreducible and aperiodic. Let the eigenvalues of its transition matrix be given by $1 = \lambda_0 > \lambda_1 \ge \cdots \ge \lambda_{|\Omega| - 1} > -1$. Let $\lambda_{\max} = \max\{\lambda_1,|\lambda_{|\Omega|-1}|\}$. A lower bound in the mixing time can be established in two steps.  The total variation distance to stationarity can be shown to be lower bounded below by the second largest eigenvalue in magnitude. 

\begin{theorem}\cite[Theorem 4.9]{MonTet06}  \label{thm:mixtime-lb-lambda} The mixing time of an irreducible Markov chain can be bounded as 
\[
\tau(\varepsilon) \ge \frac{|\lambda_{max}|}{1-|\lambda_{max}|} \ln((2\varepsilon)^{-1}).
\]

\end{theorem}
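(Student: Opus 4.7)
The plan is to extract a lower bound on $d(t)$ directly from the spectrum by testing the operator $P^t$ against a non-principal eigenfunction of $P$. First I would fix an eigenvalue $\lambda$ of $P$ with $|\lambda| = \lambda_{\max}$ and let $f:\Omega\to\mathbb{C}$ be a corresponding eigenvector. Since the chain is irreducible and aperiodic, $1$ is a simple eigenvalue of $P$ (by the Perron--Frobenius theorem applied to stochastic matrices), so $\lambda \ne 1$. Summing the relation $Pf = \lambda f$ against $\pi$ and using $\pi P = \pi$ yields $(1-\lambda) \sum_y \pi(y) f(y) = 0$, hence $\sum_y \pi(y) f(y) = 0$. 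I would then normalize $f$ so that $\|f\|_\infty = 1$ and choose a state $x \in \Omega$ realizing $|f(x)| = 1$.

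Next I would combine the iterated eigenvalue equation with the orthogonality relation above to compute
\[
|\lambda|^t \;=\; |\lambda^t f(x)| \;=\; |(P^t f)(x)| \;=\; \Bigl|\sum_{y \in \Omega} \bigl(P^t(x,y) - \pi(y)\bigr)\, f(y) \Bigr|.
\]
Estimating the right-hand side via the triangle inequality and $|f(y)| \le 1$, and then appealing to the characterization of total variation distance in Proposition/Definition~\ref{total-variation}, gives
\[
|\lambda_{\max}|^t \;\le\; \sum_{y \in \Omega} |P^t(x,y) - \pi(y)| \;=\; 2\, \|P^t(x,\cdot) - \pi\|_{TV} \;\le\; 2\, d(t).
\]

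Applying this at $t = \tau(\varepsilon)$ gives $|\lambda_{\max}|^{\tau(\varepsilon)} \le 2\varepsilon$ and, after taking logarithms, $\tau(\varepsilon) \ge \ln((2\varepsilon)^{-1})/\ln(1/|\lambda_{\max}|)$. The elementary inequality $-\ln(1-u) \le u/(1-u)$ for $u \in (0,1)$, applied to $u = 1-|\lambda_{\max}|$, yields $\ln(1/|\lambda_{\max}|) \le (1-|\lambda_{\max}|)/|\lambda_{\max}|$; inverting produces the clean bound
\[
\tau(\varepsilon) \;\ge\; \frac{|\lambda_{\max}|}{1-|\lambda_{\max}|}\, \ln\bigl((2\varepsilon)^{-1}\bigr).
\]

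The argument is largely routine; the only subtlety is that $\lambda$ need not be positive or even real in the non-reversible case, but the calculation above handles this uniformly since only $|f|$ enters the bound. The replacement of $\ln(1/|\lambda_{\max}|)$ by $(1-|\lambda_{\max}|)/|\lambda_{\max}|$ is the step where some accuracy is lost, but it is tight as $|\lambda_{\max}| \to 1$, which is precisely the slow-mixing regime of interest for chains such as $M_{m,n}$, whose $\lambda_{\max}$ will be exponentially close to $1$.
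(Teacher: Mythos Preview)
The paper does not prove this theorem at all; it is quoted verbatim from the Montenegro--Tetali reference \cite{MonTet06} and used as a black box. So there is no ``paper's own proof'' to compare against.

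Your argument is the standard one and is correct. Each step checks out: the orthogonality $\sum_y \pi(y)f(y)=0$ follows from $\pi P=\pi$ and $\lambda\neq 1$; subtracting $\pi$ inside $(P^tf)(x)$ is then legitimate and yields $|\lambda_{\max}|^t \le 2d(t)$; and the log estimate $-\ln(1-u)\le u/(1-u)$ converts $\ln(1/|\lambda_{\max}|)^{-1}$ into $|\lambda_{\max}|/(1-|\lambda_{\max}|)$. Your observation that only $|f|$ enters, so the argument needs neither reversibility nor real spectrum, is also correct and worth noting, since the paper's surrounding text lists the eigenvalues as real (implicitly assuming reversibility) even though the theorem as stated does not require it. The only tacit hypothesis you are using is $|\lambda_{\max}|<1$, which is guaranteed by irreducibility and aperiodicity via Perron--Frobenius; you might make that explicit in a final write-up.
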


Secondly, the second largest eigenvalue in magnitude can be related to the conductance of the chain.

\begin{theorem}\cite[Lemma 2.6]{Sin93} For an irreducible and reversible Markov chain with conductance $\Phi$,
\[
\lambda_{1} \ge 1-2\Phi.
\]

\end{theorem}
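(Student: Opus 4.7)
The plan is to establish this Cheeger-type inequality via the variational (Rayleigh quotient) characterization of $\lambda_1$, using the Dirichlet form associated with the chain. Since the chain is reversible with respect to $\pi$, the transition matrix $P$ is self-adjoint on the weighted inner product space $\ell^2(\pi)$ where $\langle f,g\rangle_\pi = \sum_x \pi(x) f(x) g(x)$, so its eigenvalues are real and admit a Courant--Fischer description. In particular, the second-largest eigenvalue satisfies
\[
1-\lambda_1 \;=\; \min_{f \,:\, \mathrm{Var}_\pi(f)>0} \frac{\mathcal{E}(f,f)}{\mathrm{Var}_\pi(f)},
\]
where $\mathcal{E}(f,f) = \tfrac{1}{2}\sum_{x,y}\pi(x) P(x,y) (f(x)-f(y))^2$ is the Dirichlet form and the variance is taken with respect to $\pi$.

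The strategy is then to exhibit a single test function $f$ for which this ratio is at most $2\Phi$. The natural choice is the indicator $f = \mathbf{1}_S$, where $S \subseteq \Omega$ is a set achieving (or nearly achieving) the minimum in the definition of conductance, so that $0 < \pi(S) \le \tfrac{1}{2}$ and $Q(S, S^c)/\pi(S) = \Phi$, where $Q(S,S^c) \defeq \sum_{x \in S, y \notin S} \pi(x) P(x,y)$. For this $f$, the Dirichlet form collapses because $(f(x)-f(y))^2$ is $1$ exactly when the edge crosses the cut $S/S^c$ and $0$ otherwise; using reversibility to symmetrize, one obtains $\mathcal{E}(f,f) = Q(S,S^c)$. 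The variance computation is equally routine: $\mathrm{Var}_\pi(\mathbf{1}_S) = \pi(S)(1-\pi(S))$, which is at least $\pi(S)/2$ thanks to the constraint $\pi(S) \le \tfrac{1}{2}$.

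Combining the two ingredients,
\[
1 - \lambda_1 \;\le\; \frac{\mathcal{E}(\mathbf{1}_S,\mathbf{1}_S)}{\mathrm{Var}_\pi(\mathbf{1}_S)} \;\le\; \frac{Q(S,S^c)}{\pi(S)/2} \;=\; 2\cdot\frac{Q(S,S^c)}{\pi(S)} \;=\; 2\Phi,
\]
which rearranges to $\lambda_1 \ge 1 - 2\Phi$, as desired.

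There is no real obstacle here; the only points that require minor care are (i) justifying the Rayleigh-quotient formula for $1-\lambda_1$, which follows from the spectral theorem applied to the self-adjoint operator $P$ on $\ell^2(\pi)$ together with the fact that the top eigenvector is the constant function $\mathbf{1}$ with eigenvalue $1$, so that the minimization is effectively restricted to $f \perp_\pi \mathbf{1}$, i.e., to mean-zero functions, which is exactly what the variance in the denominator enforces; and (ii) in case the infimum defining $\Phi$ is not attained, taking a minimizing sequence of sets $S_n$ and passing to the limit (finite state space makes this trivial). The rest is a direct computation that uses reversibility in exactly one place, namely to rewrite the sum over ordered pairs as twice the sum over cut edges.
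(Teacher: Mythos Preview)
Your argument is correct and is the standard proof of the easy direction of the Cheeger inequality: plug the indicator $\mathbf{1}_S$ of a minimizing set into the variational formula for the spectral gap and bound the variance below using $\pi(S)\le \tfrac12$. The computations of $\mathcal{E}(\mathbf{1}_S,\mathbf{1}_S)=Q(S,S^c)$ and $\mathrm{Var}_\pi(\mathbf{1}_S)=\pi(S)(1-\pi(S))$ are exactly right, and your remark about the Rayleigh quotient being invariant under adding constants is what justifies replacing the orthogonality constraint $f\perp_\pi \mathbf{1}$ by the variance in the denominator.

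As for comparison with the paper: there is nothing to compare. The paper does not supply a proof of this statement at all; it simply quotes the result from \cite[Lemma 2.6]{Sin93} and uses it as a black box on the way to Theorem~\ref{thm:slow-mixing}. So your write-up is not an alternative to the paper's argument but rather a self-contained justification of a cited fact. If anything, you could trim the closing paragraph: on a finite state space the minimum defining $\Phi$ is over finitely many nonempty proper subsets and is always attained, so point (ii) is vacuous here.
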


Since $\lambda_{\max} \ge \lambda_1$, the following is an immediate consequence.

\begin{corollary}\label{cor:lambda-lb-conductance}
For an irreducible and reversible Markov chain with conductance $\Phi$,
\[
\lambda_{max} \ge 1-2\Phi.
\]

\end{corollary}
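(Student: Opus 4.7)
The plan is to derive this directly from the preceding theorem by chaining two inequalities. By definition, $\lambda_{\max} = \max\{\lambda_1, |\lambda_{|\Omega|-1}|\}$, so in particular $\lambda_{\max} \geq \lambda_1$. The preceding theorem (Sinclair's bound) gives $\lambda_1 \geq 1 - 2\Phi$ for any irreducible reversible chain with conductance $\Phi$. Combining these two inequalities immediately yields $\lambda_{\max} \geq 1 - 2\Phi$, which is the desired conclusion.

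There is no real obstacle here; the content of the corollary is simply observing that the bound Sinclair proves for $\lambda_1$ automatically transfers to $\lambda_{\max}$, since the latter is by definition at least the former. The only subtle point worth mentioning is that while $\lambda_{\max}$ also accounts for the most negative eigenvalue $\lambda_{|\Omega|-1}$, we do not need to control that quantity separately for the lower bound: we are bounding $\lambda_{\max}$ from below, and $\lambda_1$ alone suffices. The corollary is stated in this form because, when combined with Theorem \ref{thm:mixtime-lb-lambda}, it is $\lambda_{\max}$ (not $\lambda_1$) that appears in the mixing time lower bound, and so having the conductance bound phrased in terms of $\lambda_{\max}$ is the convenient form for the subsequent application to $M_{m,n}$ and $N^p_{m,n}$.
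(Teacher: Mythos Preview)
Your proposal is correct and matches the paper's approach exactly: the paper simply notes that since $\lambda_{\max} \ge \lambda_1$, the corollary is an immediate consequence of the preceding theorem.
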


Combining Theorem \ref{thm:mixtime-lb-lambda} and Corollary \ref{cor:lambda-lb-conductance} results in the following lower bound on the mixing time.
\begin{theorem}\label{thm:slow-mixing}
For an irreducible and reversible Markov chain with conductance $\Phi$,
\[
\tau(\varepsilon) \geq \frac{1-2\Phi}{2\Phi}\ln((2\varepsilon)^{-1}).
\]
\end{theorem}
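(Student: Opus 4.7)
The plan is to prove Theorem~\ref{thm:slow-mixing} by directly chaining the two preceding results, since both ingredients are already in place: Theorem~\ref{thm:mixtime-lb-lambda} gives a lower bound on the mixing time in terms of the second-largest eigenvalue in magnitude, and Corollary~\ref{cor:lambda-lb-conductance} gives a lower bound on $\lambda_{\max}$ in terms of the conductance $\Phi$. What remains is to combine them via a monotonicity argument.

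First, I would record Theorem~\ref{thm:mixtime-lb-lambda}, which asserts that
\[
\tau(\varepsilon) \;\geq\; \frac{|\lambda_{\max}|}{1-|\lambda_{\max}|}\,\ln((2\varepsilon)^{-1}).
\]
Then I would observe that the real-valued function $f(t) = t/(1-t)$ is strictly increasing on $[0,1)$, since $f'(t) = 1/(1-t)^2 > 0$ there. So any lower bound $|\lambda_{\max}| \geq \ell$ with $0 \leq \ell < 1$ yields the corresponding lower bound $|\lambda_{\max}|/(1-|\lambda_{\max}|) \geq \ell/(1-\ell)$.

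Next, I would invoke Corollary~\ref{cor:lambda-lb-conductance}, which gives $\lambda_{\max} \geq 1-2\Phi$. Since $\Phi$ is defined (Definition~\ref{conductance}) as a minimum over subsets $S$ with $\pi(S) \leq \tfrac12$, we have $0 \leq \Phi \leq \tfrac12$, so $0 \leq 1-2\Phi \leq 1$; moreover $\lambda_{\max} \geq 0$ because $\lambda_{\max} \geq \lambda_1 \geq 1-2\Phi \geq 0$, and hence $|\lambda_{\max}| = \lambda_{\max} \geq 1-2\Phi$. Setting $\ell = 1-2\Phi$ in the monotonicity observation above gives
\[
\frac{|\lambda_{\max}|}{1-|\lambda_{\max}|} \;\geq\; \frac{1-2\Phi}{1-(1-2\Phi)} \;=\; \frac{1-2\Phi}{2\Phi}.
\]
Substituting into the bound from Theorem~\ref{thm:mixtime-lb-lambda} yields the desired inequality.

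There is essentially no obstacle here; the only subtlety worth spelling out is the monotonicity of $t/(1-t)$ together with the elementary check that $1-2\Phi \in [0,1)$ so that the substitution is valid (and to justify replacing $|\lambda_{\max}|$ by $\lambda_{\max}$). The proof is therefore a short calculation of two or three lines once the lemmas are cited. For use in Theorem~\ref{thm:main3}, the practical consequence I would emphasize is that an exponentially small conductance $\Phi$ -- which Theorem~\ref{thm:main2} will supply via the bottleneck $S(C_{m,n})_{a \nleftrightarrow b}$ of Lemma~\ref{lemma:bottleneck} -- translates directly into an exponentially large mixing time.
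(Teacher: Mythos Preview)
Your proof is correct and matches the paper's approach exactly: the paper does not give a separate argument but simply states that the theorem follows by combining Theorem~\ref{thm:mixtime-lb-lambda} and Corollary~\ref{cor:lambda-lb-conductance}, which is precisely the chaining-plus-monotonicity you carry out. One small quibble: the assertion that $\Phi \leq \tfrac12$ does not follow merely from the constraint $\pi(S)\le\tfrac12$ in Definition~\ref{conductance} (that constraint bounds the denominator, not the ratio), but this is harmless, since whenever $\Phi > \tfrac12$ the right-hand side $\frac{1-2\Phi}{2\Phi}\ln((2\varepsilon)^{-1})$ is nonpositive and the inequality holds trivially.
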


Therefore, to show that a Markov chain mixes slowly, it suffices to find a set $S$ whose bottleneck ratio is very small. 

%
%
%
%
%
%
%

\subsection{\textsf{The symmetric Markov chain of monotone paths in a strip .}}

\begin{definition} \label{robotic-arm-markov-chain}
\textbf{(The symmetric Markov chain $M_{m,n}$ on monotone paths)} 
Let $\Omega_{m,n}$ denote the set of monotone paths of length $n$ in a strip of height $m$. 
For $t \geq 0$, if $X_t \in \Omega_{m,n}$ is the path at time $t$, we obtain the next path $X_{t+1}$ as follows. 
 \begin{enumerate}
\item
Choose any of the $n+1$ vertices of the path uniformly at random, except the first one, so that each possibility has probability $\frac1n$. 
\item 
\begin{enumerate}
\item Suppose an interior vertex $j$ is chosen. If $X_t$ has a corner at vertex $k$, and the corner can be switched, switch that corner to get $X_{t+1}$. If it does not, let $X_{t+1} = X_t$.
\item Suppose the last vertex is chosen. If the last step is N or S, flip it to E with probability $\frac12$ and do nothing with probability $\frac12$. If the last step is E, flip it to S with probability $\frac12$ and to N with probability $\frac12$.
\end{enumerate}
\end{enumerate}
\end{definition}

\begin{theorem}\label{slow-mixing-theorem}
Let $m \geq 2$ be a fixed integer. 
The stationary distribution of the symmetric Markov chain $M_{m,n}$ is uniform.
The mixing time of the chain $M_{m,n}$ grows exponentially with $n$; explicitly, there exists a constant $C>0$ such that for $n$ sufficiently large, the mixing time $\tau(\varepsilon)$ satisfies
\[
\tau(\varepsilon)  \ge C \cdot r_m^n \ln(\varepsilon^{-1})
\]
where $r_m > 1$ is the Perron eigenvalue of $A_m$ in Lemma \ref{lem:Perron}.
\end{theorem}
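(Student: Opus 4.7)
The plan is to verify the standard Markov-chain hypotheses, identify the uniform distribution as stationary, and then apply the conductance-based slow-mixing bound (Theorem \ref{thm:slow-mixing}) using the bottleneck found in Section \ref{sec:CAT(0)}.

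First, I would check that $M_{m,n}$ is irreducible, aperiodic, and reversible with respect to the uniform distribution. Irreducibility follows because the graph $S_{m,n}$ is connected \cite{Ard17}. Aperiodicity follows from the existence of self-loops: at any bottleneck-style path (e.g., the all-$E$ path), picking an interior vertex that is not a corner leaves the path unchanged, giving $P(x,x)>0$. For reversibility, one directly checks from Definition \ref{robotic-arm-markov-chain} that whenever $x \sim y$ in $S_{m,n}$, the probability of the forward and reverse moves agree: corner swaps at position $j$ contribute $P(x,y)=P(y,x)=\tfrac{1}{n}$, and end flips contribute $P(x,y)=P(y,x)=\tfrac{1}{2n}$. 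Hence $\pi(x) = 1/|S_{m,n}|$ satisfies detailed balance, so the uniform distribution is the (unique) stationary distribution.

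Next, the mixing lower bound. Take the cut set $S = \S(C_{m,n})_{b}$ supplied by Bottleneck Lemma \ref{lem:bottleneck}, where $b$ is the low inconsistent element defined before Lemma \ref{lemma:S}. By Lemma \ref{lemma:bottleneck},
\[
\pi(S) = \frac{|S|}{|S_{m,n}|} = \frac{u_m(n)}{c_m(n)} \longrightarrow C_m \le 0.345\ldots < \tfrac12,
\]
so $\pi(S) \le \tfrac12$ for all $n$ sufficiently large. By Lemma \ref{lem:bottleneck}, the only edges of $S_{m,n}$ leaving $S$ land in the bottleneck $\S(C_{m,n})_{a \nleftrightarrow b}$, and Lemma \ref{lemma:S} tells us this bottleneck has exactly $n+1$ vertices, namely the paths with at most one vertical step. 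A direct inspection of the local moves shows that every such bottleneck path has degree bounded by an absolute constant $d$ (with $d \le 4$: at most two corner swaps around a vertical step, plus at most two end flips). Hence the total number of edges across the cut is at most $d(n+1)$, and each carries probability $P(x,y) \le \tfrac{1}{n}$. Therefore
\[
\sum_{x \in S,\, y \notin S} \pi(x) P(x,y) \;\le\; \frac{d(n+1)}{|S_{m,n}|} \cdot \frac{1}{n} \;\le\; \frac{2d}{|S_{m,n}|},
\]
and the bottleneck ratio satisfies
\[
\Phi \;\le\; \frac{2d/|S_{m,n}|}{|S|/|S_{m,n}|} \;=\; \frac{2d}{u_m(n)} \;\sim\; \frac{2d}{v_m\, r_m^{\,n}},
\]
where $v_m > 0$ is the constant produced in the proof of Lemma \ref{lemma:bottleneck}.

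Finally, plugging this into Theorem \ref{thm:slow-mixing} and using $\tfrac{1-2\Phi}{2\Phi} \ge \tfrac{1}{4\Phi}$ for $n$ large enough that $\Phi \le \tfrac14$, we get
\[
\tau_{M_{m,n}}(\varepsilon) \;\ge\; \frac{1-2\Phi}{2\Phi}\, \ln\!\bigl((2\varepsilon)^{-1}\bigr) \;\ge\; \frac{v_m}{8d}\, r_m^{\,n}\, \ln\!\bigl((2\varepsilon)^{-1}\bigr),
\]
and absorbing the $\ln 2$ into the constant yields the claimed bound $\tau_{M_{m,n}}(\varepsilon) \ge C \cdot r_m^{\,n}\, \ln(\varepsilon^{-1})$.

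The main technical step, and the only place where real care is required, is the bounded-degree claim for bottleneck vertices: one must verify that paths with at most one vertical step have uniformly bounded degree in $S_{m,n}$ (independent of $n$), since without this the numerator of $\Phi$ would be $O(1/|S_{m,n}|)$ rather than $O(1/|S_{m,n}|)$ and the factor of $n$ would be lost. Everything else is bookkeeping: combining the Perron-asymptotic $u_m(n) \sim v_m r_m^n$ from Lemma \ref{lemma:bottleneck} with the conductance lower bound for reversible chains.
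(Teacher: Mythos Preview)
Your proof is correct and follows essentially the same route as the paper: verify irreducibility/aperiodicity/reversibility to get the uniform stationary distribution, then bound the conductance using the cut $S=\S(C_{m,n})_b$ and apply Theorem~\ref{thm:slow-mixing}. The only difference is in how the edges across the cut are counted: the paper observes from the PIP description that each bottleneck vertex $y\in\S_{a\nleftrightarrow b}$ has at most one neighbor in $\S_b$ (namely $I\cup\{b\}$ if that happens to be an order ideal), giving at most $n+1$ crossing edges directly, whereas you bound the total degree of each bottleneck path by a constant $d\le 4$; both yield $\Phi=O(r_m^{-n})$ and hence the same exponential lower bound.
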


\begin{proof}
To prove the first statement, we verify that our Markov chain $M_{m,n}$ on monotone paths has all the properties of Theorems \ref{thm:stationary} and \ref{thm:slow-mixing}. 
It is irreducible since there is a set of moves that allows us to go between any two arm configurations: one can start with the downset corresponding to the starting configuration, remove elements one at a time until we are at the empty downset, and then add elements back in to get to the final configuration.
The chain is aperiodic since it contains states that are connected by one move to themselves. 
Finally, the chain is reversible with respect to the uniform distribution over monotone paths: for every pair of states $x,y$ connected by a corner flip, $P(x,y) = P(y,x) = \frac{1}{n}$ while for a pair of states connected by flipping the end, $P(x,y) = P(y,x) = \frac{1}{2n}$. 
It follows that the stationary distribution is uniform.

To bound the mixing time, recall that in Lemma \ref{lemma:bottleneck} we identified a small bottleneck of $n+1$ vertices in $\S_{a \nleftrightarrow b} := \S(C_{m,n})_{a \nleftrightarrow b}$ that separates the transition kernel $\S=\S(C_{m,n})$ of our Markov chain into two parts $\S_{a}$ and $\S_b$ that each contain roughly a constant fraction of the (exponentially many) vertices. This gives us that
\begin{eqnarray*}
\Phi & \le & \displaystyle \frac{\sum\limits_{x \in \S_b, y \notin \S_b} \pi(x)P(x,y)}{\pi(\S_b)} \\
& = & \displaystyle \frac{\sum\limits_{x \in \S_b, y \in \S_{a \nleftrightarrow b}}  P(x,y)}{|\S_b|} 
\end{eqnarray*}
since an edge $xy$ that leaves $\S_b$ can only arrive in $\S_{a \nleftrightarrow b}$, and $\pi(x) = 1/{c_m(n)}$ for all $x$. Now, each $y \in  \S_{a \nleftrightarrow b}$ is connected to at most one $x \in  \S_b$: if $y$ corresponds to an order ideal $I$ of the coral PIP $P_{m,n}$ that does not contain $a$ or $b$, then $x$ can only correspond to the order ideal $I \cup b$ that contains $b$ -- if $I \cup b$ is indeed an order ideal. It follows that
\begin{eqnarray*}
\Phi & \leq & \displaystyle \frac{|\S_{a \nleftrightarrow b}| \cdot \frac{1}{n}}{|\S_b|}  \\ 
& \sim & \frac 1{C_m q_m \cdot r_m^n} 
\end{eqnarray*}
by Lemmas \ref{lemma:S} and \ref{lemma:bottleneck} and Corollary \ref{cor:c_mPerron}. The desired bound on the mixing time then follows from Theorem \ref{thm:slow-mixing}.
\end{proof}

\subsection{\textsf{The lazy simple Markov chain of monotone paths in a strip}}

The lazy simple Markov chain $N_{m,n}$ on monotone paths starts at an initial state $Y_0$, and proceeds to state $Y_{t+1}$ by uniformly at random performing one of the local moves that are available at $Y_t$.
This Markov chain has period $2$, because the parity of the number of vertical steps changes with every local move. To make it aperiodic, we apply the usual strategy: slow down the walk with probability $1-p$ at each step.

\begin{definition} \label{robotic-arm-markov-chain}
\textbf{(The lazy simple Markov chain $N^p_{m,n}$ on monotone paths)} 
Let $0 < p < 1$. Let  $\Omega_{m,n}$ denote the set of monotone paths of length $n$ in a strip of height $m$. 
For $t \geq 0$, if $Y_t \in \Omega_{m,n}$ is the position of the path at time $t$, we obtain $Y_{t+1}$ as follows:
\begin{enumerate}
\item
With probability $(1-p)$, do nothing: let $Y_{t+1}=Y_t$. 
\item 
With probability $p$ obtain $Y_{t+1}$ by uniformly at random performing one of the local moves that are available at $Y_t$.
\end{enumerate}
\end{definition}

\begin{theorem}\label{slow-mixing-theorem2}
Let $m\geq 2$ be a fixed integer. Let $N^p_{m,n}$ be the lazy simple Markov chain on monotone paths of length $n$ in a strip of height $m$.
The stationary distribution of $N^p_{m,n}$ is given by
\[
\pi(x) \propto \deg x
\]
for each path $x$, where $\deg x$ is the number of different local moves available at $x$. 

The mixing time of the chain $N^p_{m,n}$ grows exponentially with $n$; explicitly, there exists a constant $C>0$ such that for $n$ sufficiently large, the mixing time $\tau(\varepsilon)$ satisfies
\[
\tau(\varepsilon)  \ge C \cdot \frac1n r_m^n \ln(\varepsilon^{-1})
\]
where $r_m > 1$ is the Perron eigenvalue  of $A_m$ in Lemma \ref{lem:Perron}.
\end{theorem}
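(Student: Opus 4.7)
The plan is to follow the same pattern as the proof of Theorem \ref{slow-mixing-theorem} for the symmetric chain, adapting the conductance calculation to the new, degree-weighted stationary distribution of the lazy simple chain.

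For the first statement, I would verify the three properties needed to apply Theorem \ref{thm:stationary} and the reversibility propdef: irreducibility (unchanged from the symmetric case, since the underlying graph $S_{m,n}$ is connected), aperiodicity (immediate from the laziness parameter $1-p>0$, which gives every state a self-loop), and reversibility with respect to $\pi(x) = \deg(x)/Z$, where $Z = \sum_x \deg(x) = 2|E(S_{m,n})|$. The detailed balance check is the one-line computation: for any edge $xy$,
\[
\pi(x)P(x,y) \;=\; \frac{\deg(x)}{Z}\cdot\frac{p}{\deg(x)} \;=\; \frac{p}{Z} \;=\; \pi(y)P(y,x),
\]
and the self-loop steps contribute equally to both sides. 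This gives both the form of the stationary distribution and the reversibility needed to invoke Theorem \ref{thm:slow-mixing}.

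For the mixing-time bound, I would reuse the same bottleneck structure coming from the inconsistent pair $a \nleftrightarrow b$ of the coral PIP $C_{m,n}$, and take $S$ to be whichever of $\S(C_{m,n})_a$ or $\S(C_{m,n})_b$ has $\pi$-measure at most $\tfrac12$ (the argument is symmetric in the two sides since the edge boundary is the same). The crucial simplification of the lazy simple chain is that $\pi(x)P(x,y) = p/Z$ is constant on edges, so
\[
\Phi(S) \;=\; \frac{\sum_{x\in S,\,y\notin S}\pi(x)P(x,y)}{\pi(S)} \;=\; \frac{p\,|E(S,\bar S)|}{\operatorname{vol}(S)},
\]
where $\operatorname{vol}(S) = \sum_{x \in S}\deg(x)$. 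The edge-boundary bound is inherited verbatim from the symmetric case: every edge leaving $S$ ends in $\S(C_{m,n})_{a \nleftrightarrow b}$, and each of those $n+1$ vertices is adjacent to at most one vertex on the $S$-side, so $|E(S,\bar S)| \le n+1$. For the denominator I would use the crude bound $\operatorname{vol}(S) \ge |S|$ together with Lemma \ref{lemma:bottleneck} and Corollary \ref{cor:c_mPerron}, giving $\min(\operatorname{vol}(S), \operatorname{vol}(\bar S)) \ge \min(|S|, |\bar S|) \sim \min(C_m, 1-C_m)\, q_m\, r_m^n$.

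Combining these ingredients yields $\Phi \le p(n+1)/(c\, r_m^n)$ for some positive constant $c$, and plugging into Theorem \ref{thm:slow-mixing} produces the desired lower bound $\tau(\varepsilon) \ge D\cdot \tfrac{1}{n}\, r_m^n\, \ln(\varepsilon^{-1})$. The main new feature compared to the symmetric case is the extra factor of $1/n$, and this is where I expect the only real conceptual obstacle: in the symmetric chain the transition probability on each edge was $\Theta(1/n)$ but the stationary measure was uniform, whereas here the per-edge flow $p/Z$ is uniform while the stationary mass concentrates on high-degree vertices; this trades the $1/n$ from the transition probability for a $1/n$-sized loss in the volume-to-cardinality ratio, since the maximum vertex degree in $S_{m,n}$ is $O(n)$. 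Once one is comfortable with this accounting, the rest of the proof is a direct transcription of the symmetric argument.
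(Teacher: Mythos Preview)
Your proposal is correct and follows essentially the same route as the paper: verify irreducibility, aperiodicity, and reversibility with respect to $\pi(x)\propto\deg(x)$, then bound the conductance using the same inconsistent pair $a\nleftrightarrow b$ and the observation that each of the $n+1$ bottleneck vertices has at most one neighbor across the cut, combined with $\operatorname{vol}(S)\ge |S|\sim C_m q_m r_m^n$. The paper simply takes $S=\S_b$ throughout (relying implicitly on $C_m<\tfrac12$), whereas you take whichever side has $\pi$-measure at most $\tfrac12$; your phrasing ``the edge boundary is the same'' is not literally true (the edges out of $\S_a$ and out of $\S_b$ are different), but the bound $|E(S,\bar S)|\le n+1$ does hold for either choice by the same one-neighbor argument, so the symmetry you invoke is valid.
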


\begin{proof} The chain $N_{m,n}$ is irreducible for the same reasons as the chain $M_{m,n}$. It is aperiodic by construction. It can be verified that the chain is reversible with respect to the distribution $\pi$ since for each pair of states $x \ne y$, $P(x,y)= p/\deg(x)$. Hence, $\pi$ is the unique stationary distribution of the chain.
%
%
%
%

For the mixing time, we modify the proof in  Theorem \ref{slow-mixing-theorem}. 
Following the same notation, we have
\begin{eqnarray*}
\Phi & \le & \displaystyle \frac{\sum\limits_{x \in \S_b, y \notin \S_b} \pi(x)P(x,y)}{\pi(\S_b)} \\
& = & \displaystyle \frac{\sum\limits_{x \in \S_b, y \in \S_{a \nleftrightarrow b}}  \deg(x) P(x,y)}{\sum\limits_{x \in \S_b} \deg(x)} \\
& = & \displaystyle \frac{\sum\limits_{x \in \S_b, y \in \S_{a \nleftrightarrow b}, xy \in E(S_{m,n})} p}{\sum\limits_{x \in \S_b} \deg(x)}
\end{eqnarray*}
since $P(x,y) = p(1/\deg(x))$ whenever $x$ and $y$ are adjacent in the graph $S_{m,n}$, by the definition of the Markov chain. 

Again, there are exactly $n+1$ vertices in $\S_{a \nleftrightarrow b}$; let $Y_0, \ldots, Y_n$ be the corresponding order ideals of the coral PIP $C_{m,n}$, where $Y_i$ is a chain of length $i$. Notice that $Y_i$ has exactly one neighbor in $\S_b$ -- namely $X_i=Y_i \cup \{b\}$ -- for $i=2, \ldots, n$, and none for $i=0, 1$. Also $\deg(x) \geq 1$ for all $x$. This implies
\begin{eqnarray*}
\Phi & \leq & \frac{p(n-1)}{|\S_b|}  \\ 
& \sim & \frac{pn}{C_m q_m \cdot r_m^n}
\end{eqnarray*}
by Lemma \ref{lemma:bottleneck} and Corollary \ref{cor:c_mPerron}. 
The desired bound on the mixing time follows by Theorem \ref{thm:slow-mixing}.
\end{proof}

\section{\textsf{Further Directions}}

\begin{itemize}

\item For the first few values of $m$, the polynomial $a_m(x)$ factor into two polynomials of degree $\lfloor \frac12(3m+1) \rfloor$ and $\lceil \frac12(3m+1) \rceil$ that are irreducible over $\mathbb{Z}$. Is this always the case? Which factor contributes the largest real root $r_m$? Is $r_m$ always inexpressible in terms of radicals?

\item 
The denominator $\det(I-xA_m) = (-x)^{3m+1}a_m(-1/x)$ in Theorem \ref{thm:detformula} is essentially equal to the characteristic polynomial $a_m(x)$, for which we found an explicit formula in Proposition \ref{prop:charpoly}. Can we also find an explicit formula for the numerator?

\item
Can we generalize the results in this paper to monotone paths in wedges, as studied by Janse van Rensburg, Prellberg, and Rechnitzer\cite{JPR}?


\item The authors of  \cite{Ard17}, ask whether the configuration space of not necessarily monotone, but still self-avoiding paths in a strip of height $m$ is still CAT(0). In our context, we can ask about the growth rate constants -- which have been computed for $m \leq 2$ by Dangovski and Lalov \cite{DL} -- 
and the mixing times of the corresponding Markov chains.

\item Is there a natural set of moves that connects the monotone paths of length $n$ in a strip of height $m$, for which the Markov chain mixes in polynomial time?

\end{itemize}

\section{\textsf{Acknowledgements}}
Coleson and Naya thank the University of Delaware and the Department of Mathematical Sciences for making this collaboration possible. Their work was supported by National Science Foundation grant DMS-1554783. Coleson would like to thank Naya for allowing him the opportunity to conduct research with her.
Federico was supported by National Science Foundation grant DMS-2154279. He would like to thank Mariana Smit Vega for help with the proof of Proposition \ref{prop:Perron}. He is also grateful to the announcer of KRZZ La Raza 93.3 who  serendipitously announced ``y ahora, un corrido bien perr\'on" and made him realize that Perron eigenvalues would play an important role in this project.

\bibliographystyle{alpha}
\bibliography{monotonepathsmixslowly}

\end{document}